\documentclass[a4paper,12pt,oneside]{article}
%---------------------------------------------
\setlength{\topmargin}{-10mm}
\setlength{\oddsidemargin}{5mm}
\setlength{\evensidemargin}{5mm}
\setlength{\textwidth}{160truemm}
\setlength{\textheight}{240truemm}
\setlength{\parindent}{0mm} 
\setlength{\parskip}{0ex} 
\setlength{\baselineskip}{0pt} 
%---------------------------------------------
\usepackage[utf8]{inputenc}
\usepackage[english]{babel}
\usepackage{amsmath}
\usepackage{amsthm}
\usepackage{amssymb}
\usepackage{mathtools}
\usepackage{tikz}
\usepackage{verbatim}
\usepackage{fancyvrb}
\usepackage{array}
\usepackage{adjustbox}
\usepackage{changepage}
\usepackage{subcaption}
\usepackage{breqn}
\usepackage{enumerate}
\usepackage{wrapfig}
\usepackage{bbm}
\usepackage{bm}
\usepackage{enumitem}
\usepackage{csquotes}
\usepackage{hyperref}
\usepackage{framed}
\usepackage{enumitem}
%---------------------------------------------
\theoremstyle{definition}
\newtheorem{defi}{Definition}[section]
\newtheorem{thm}[defi]{Theorem}

\newtheorem{ex}[defi]{Example}
\newtheorem{prop}[defi]{Proposition}

\newtheorem{cor}[defi]{Corollary}
\newtheorem{rem}[defi]{Remark}

%---------------------------------------------
\newcommand{\N}{\mathbb{N}}
\newcommand{\Z}{\mathbb{Z}}

\newcommand{\C}{\mathbb{C}}
\newcommand{\R}{\mathbb{R}}

\newcommand{\im}{\Rightarrow}

\newcommand{\om}{\omega}

\newcommand{\sz}[1]{\left| \vec{#1} \right|}
\newcommand{\spn}[1]{\text{span}\{#1\}}
\newcommand{\MT}[1]{\mathcal{M}#1}
\newcommand{\LT}[1]{\mathcal{L}#1}
\newcommand{\T}[1]{\mathcal{T}#1}

%---------------------------------------------
\title{Multiple orthogonal polynomial ensembles of derivative type}
\author{Thomas Wolfs\footnote{Department of Mathematics, KTH Royal Institute of Technology, Sweden. \\
E-mail adress: \texttt{wolfs[at]kth.se}.}} 
\date{}

%---------------------------------------------
%---------------------------------------------
\begin{document}
\maketitle

\begin{abstract}

We characterize the biorthogonal ensembles that are both a multiple orthogonal polynomial ensemble and a polynomial ensemble of derivative type (also called a Pólya ensemble). We focus on the notions of multiplicative and additive derivative type that typically appear in connection with products and sums of random matrices respectively. Essential in the characterization is the use of the Mellin and Laplace transform: we show that the derivative type structure, which is a priori analytic in nature, becomes algebraic after applying the appropriate transform. Afterwards, we use the characterization to show that the eigenvalue densities of products of JUE and LUE matrices are essentially the only multiple orthogonal polynomial ensembles of multiplicative derivative type. We also show that the eigenvalue densities of sums of dilated LUE and GUE matrices are examples of multiple orthogonal polynomial ensemble of additive derivative type, but provide other examples as well. Finally, we explain how these notions of derivative type can be used to provide a partial solution to an open problem related to orthogonality of the finite finite free multiplicative and additive convolution of polynomials from finite free probability. In particular, we obtain families of multiple orthogonal polynomials that (de)compose naturally using these convolutions.
\medbreak

\textbf{Keywords:} random matrices, polynomial ensembles, Mellin transform, Laplace transform, finite free probability, multiple orthogonal polynomials.
%% 60B20, 42C05, 46L54, 33C45

\end{abstract}

% {\small\tableofcontents}

\section{Introduction}
Over the last decade, a program was set out to obtain explicit expressions for the density of the eigenvalues of products and sums of random matrices. One of the first steps was taken in \cite{AB:prod_Gin} as Akemann and Burda obtained an expression for the density of the eigenvalues of products of square Ginibre matrices. A (complex) Ginibre matrix is a random matrix whose entries are independent standard complex Gaussian random variables, see \cite{Gin}. Afterwards, in \cite{AKW:prod_sq_Gin}, Akemann, Kieburg and Wei also managed to describe the squared singular values of products of square, and later in \cite{AIK:prod_rec_Gin}, rectangular, Ginibre matrices. Since this problem turned out to be more natural, see also \cite{KZ:prod_gin}, at least for products of random matrices focus shifted more towards their squared singular values rather than their eigenvalues. In this direction, an important milestone was achieved in \cite{KS:gin_prod} as Kuijlaars and Stivigny showed that it is possible to describe the squared singular values of products of any random matrix for which the squared singular values follow a so-called polynomial ensemble, with Ginibre matrices. Later in \cite{KKS:trunc_prod}, it was shown by Kieburg, Kuijlaars and Stivigny that one can also handle products with truncated Haar distributed random unitary matrices instead of Ginibre matrices. We refer to \cite{K:transf_PE} for an introduction to polynomial ensembles. The definition of a polynomial ensemble is as follows.

\begin{defi} \label{def_PE_full_}

Let $v_1,\dots,v_n:\R\to\R$ be such that $x\mapsto x^{k-1} v_j(x)$ is in $L^1(\R)$ for all $j,k\in\{1,\dots,n\}$. The polynomial ensemble associated to $v_1,\dots,v_n$ is the probability density $\text{PE}[v_1,\dots,v_n]$ on $\R^n$ given by
\begin{align} \label{PE}
    \frac{1}{Z_n} \prod_{1\leq j<k\leq n} (x_k-x_j) \det[v_j(x_k)]_{j,k=1}^{n} \geq 0,\quad \vec{x}\in\R^n,
\end{align}
where $Z_n\in\R_{\neq 0}$ is the normalization constant. We will write $\text{EV}(X)\sim\text{PE}[v_1,\dots,v_n]$ to indicate that an $n\times n$ random matrix $X$ has joint eigenvalue density \eqref{PE}.
\end{defi}

Polynomial ensembles arise as a special class of biorthogonal ensembles, see \cite{B:BE,KS:gin_prod}, and are therefore determinantal point processes, see \cite{J:DPP} for an introduction. A large class of polynomial ensembles are the so-called multiple orthogonal polynomial ensembles. They were introduced in \cite{Kuijlaars:MOPE} (see also \cite{AK:MOPE}) and generalize the well-studied class of orthogonal polynomial ensembles ($r=1$), also known as Coulomb gasses with $\beta=2$, see \cite{D:OPE} for an introduction. The correlation kernel for a multiple orthogonal polynomial ensemble is completely determined by the multiple orthogonal polynomials associated to the underlying system of weights, see \cite{Daems-Kuijlaars:CD_MOP}. Often, it is convenient to identify a given ensemble as a multiple orthogonal polynomial ensemble, because then a Riemann-Hilbert problem becomes available, which can be used to study the asymptotics of the kernel, see, e.g., \cite{BK:ext_src,BK:ext_src_1,ABK:ext_src_2,BK:ext_src_3}, \cite{KMFW:Bessel,KMFW:Bessel_1} and \cite{LY:ins_II,LY:ins_II_asym}.
\medbreak

We will use the following definition of a multiple orthogonal polynomial ensemble. Note that, unlike in \cite{Kuijlaars:MOPE}, we assumed that the associated multi-index is on the step-line 
$$\mathcal{S}^r = \{\vec{n}\in\N^r \mid n_1\geq\dots\geq n_r\geq n_1-1 \}.$$ 
This is standard practice when working with multiple orthogonal polynomials. Doing so, there is no ambiguity in the condition $n=\sz{n}$ as there is a unique multi-index $\vec{n}\in\mathcal{S}^r$ for which this equality holds.

\begin{defi} \label{def_MOPE_}
    We call the polynomial ensemble associated with $v_1,\dots,v_n:\R\to\R$ a multiple orthogonal polynomial ensemble with respect to $w_1,\dots,w_r:\R\to\R$, if
    \begin{align*}
        \spn{v_j(x) \mid j=1,\dots,n} = \spn{x^{k-1} w_j(x) \mid k=1,\dots,n_j, j=1,\dots,r},
    \end{align*}
for (the unique) $\vec{n}\in\mathcal{S}^r$ with $n=\sz{n}$. In that case, we write 
$$\text{PE}[v_1,\dots,v_n] = \text{MOPE}[n;w_1,\dots,w_r].$$
If $r=1$, we opt to write $\text{PE}[v_1,\dots,v_n] = \text{OPE}[n;w_1]$.
\end{defi}

For our convenience, given $r$ functions $w_1,\dots,w_r:X\to\R$ with $X\subset\R$, we will use the notation
    $$\mathcal{W}(n;w_1,\dots,w_r)=\spn{X\to\R:x\mapsto x^{k-1} w_j(x) \mid k=1,\dots,n_j, j=1,\dots,r},$$
where $\vec{n}\in\mathcal{S}^r$ is such that $\sz{n}=n$. Observe that there is some freedom in how the underlying functions are chosen as $\mathcal{W}(n;w_{0,1},\dots,w_{0,r})=\mathcal{W}(n;w_1,\dots,w_r)$ whenever $(w_{0,j})_{j=1}^n = (w_j)_{j=1}^n U$ and $U$ is an invertible upper-triangular $r\times r$ matrix. The functions $(w_{0,j})_{j=1}^n$ and $(w_j)_{j=1}^n U$ therefore give rise to the same multiple orthogonal polynomial ensemble.
\medbreak

The joint eigenvalue densities of Jacobi Unitary Ensemble (JUE), Laguerre Unitary Ensemble (LUE) and Gaussian Unitary Ensemble (GUE) are well-known examples of orthogonal polynomial ensembles and their correlation kernels involve the classical Jacobi, Laguerre and Hermite orthogonal polynomials respectively. Below, we recall the structure of these densities (which we can also see as the definition of the corresponding matrix ensembles).

\begin{ex}
Let $b>a>-1$ and let $X$ be an $n\times n$ JUE matrix with parameters $(a,b)$. We will also denote this as $X\sim\text{JUE}[n;a,b]$. Then $\text{EV}(X)\sim\text{OPE}[n;\mathcal{B}^{a,b}]$ in terms of the beta density $\mathcal{B}^{a,b}(x) = x^{a}(1-x)^{b-a-1}$ on $(0,1)$.
\end{ex}

It is well-known that the squared singular values of a truncated Haar distributed random unitary matrix follow a JUE, see, e.g., \cite[\S 3.1]{KS:gin_prod}.

\begin{ex}
Let $a>-1$ and let $X$ be an $n\times n$ LUE matrix with parameter $a$. We will also denote this as $X\sim\text{LUE}[n;a]$. Then $\text{EV}(X)\sim\text{OPE}[n;\mathcal{G}^a]$ in terms of the gamma density $ \mathcal{G}^a(x) = x^a e^{-x}$ on $\R_{>0}$.
\end{ex}

It is well-known that the squared singular values of a Ginibre matrix follow a LUE, see, e.g., \cite[\S 3.3]{KS:gin_prod}.

\begin{ex}
Let $X$ be an $n\times n$ GUE matrix. We will also denote this as $X\sim\text{GUE}[n]$. Then $\text{EV}(X)\sim\text{OPE}[n;\mathcal{G}]$ in terms of the Gaussian $ \mathcal{G}(x) = e^{-x^2}$ on $\R$.
\end{ex}

The next milestone was achieved in \cite{KK:SV_EV} as Kieburg and Kösters showed that instead of considering products of a random matrix for which the squared singular values follow a polynomial ensemble, with Ginibre and truncated random unitary matrices, one can actually handle products with any random matrices for which the squared singular values follow a so-called polynomial ensemble of derivative type. The JUE and LUE are examples of such ensembles, as explained in \cite[Ex. 2.4 $d)$ \& $c)$]{FKK:PolyaE}. In this multiplicative setting, we will define the notion of polynomial ensembles of derivative type through sets of functions of multiplicative derivative type, which we will introduce first. In the literature, typically only the associated polynomial ensemble is considered, but for our central result, the underlying functions don't necessarily have to give rise to a \textit{positive} polynomial ensemble. \\
Recall that a function $f:X\mapsto\R$ is locally absolutely continuous on $X\subset\R$, denoted by $f\in AC_{\text{loc}}(X)$, if and only if there exists $g\in L^1(X)$ and $\alpha\in\C$ such that $f(x) = \alpha + \int_1^x g(t) dt $ for all $x\in X$, and that in that case, $f$ is differentiable a.e. on $X$ with a.e. $f'=g$.

\begin{defi} \label{MDT_def}
    We call $n$ functions $v_1,\dots,v_n:\R_{>0}\to\R$ of multiplicative derivative type with respect to a function $\om\in C^{n-2}(\R_{>0})$, with $\om^{(n-2)} \in AC_{\text{loc}}(\R_{>0})$, if
    \begin{align*}
        \spn{v_j(x)}_{j=1}^n = \spn{ (x \frac{d}{dx})^{j-1}[\om(x)]}_{j=1}^n,\quad \text{a.e. } x\in\R_{>0}.
    \end{align*}
\end{defi}

Polynomial ensembles of multiplicative derivative type now arise from functions of multiplicative derivative type, whenever they give rise to a \textit{positive} polynomial ensemble. 

\begin{defi} \label{PE_MDT_def}
     We call the polynomial ensemble associated with $v_1,\dots,v_n:\R_{>0}\to\R$ of multiplicative derivative type with respect to $\om:\R_{>0}\to\R$, if the underlying functions $v_1,\dots,v_n$ have this property. In that case, we write 
        $$\text{PE}[v_1,\dots,v_n] = \text{PE}_{\text{MDT}}[n;\om].$$
\end{defi}

Unfortunately, in the literature on polynomial ensembles of multiplicative derivative type, there is some ambiguity about which space $\om^{(n-2)}$ belongs to. Originally, in \cite{KK:SV_EV}, Kieburg and Kösters demanded that $\om_n^{(n-2)}\in AC(\R_{>0})$. This is equivalent to the above because implicitly through Definition \ref{def_PE_full_}, we assume that $\om_n^{(n-1)}\in L^{1}(\R_{>0})$. Later in \cite{FKK:PolyaE}, Förster, Kieburg and Kösters demanded the stronger condition that $\om_n^{(n-2)}$ is differentiable on $\R$. For our purposes, it is important that weights of multiplicative derivative type are defined in the weaker way above as we want to focus solely on their structural properties and don't want to involve any additional properties coming from the associated polynomial ensemble.
\medbreak

The next milestone was achieved in \cite{KR:add_DT} as Kuijlaars and Román provided the analogue for the eigenvalues of sums of random matrices. This work was preceded by \cite{CKW:sums_prod}, where Claeys, Kuijlaars and Wang described the eigenvalues of the sum of a random matrix for which the eigenvalues follow a polynomial ensemble and a random matrix for which the eigenvalues follow a GUE. Generally, in the additive setting, it is possible to describe the eigenvalues of sums of a random matrix for which the eigenvalues follow a polynomial ensemble, with random matrices for which the eigenvalues follow a polynomial ensemble of another kind of derivative type. As explained in \cite[Ex. 2.4 $c)$ \& $b)$]{FKK:PolyaE}, the LUE and GUE are examples such ensembles. The precise definition of this additive derivative type is stated below. Similarly as before, our approach will be to define this notion through functions of additive derivative type, which we will introduce first. 

\begin{defi}\label{ADT_def}
      We call $n$ functions $v_1,\dots,v_n:\R\to\R$ of additive derivative type with respect to a function $\om\in C^{n-2}(\R)$, with $\om^{(n-2)}\in AC_{\text{loc}}(\R)$, if    
      \begin{align*}
        \spn{v_j(x)}_{j=1}^n = \spn{\om^{(j-1)}(x)}_{j=1}^n, \quad \text{a.e. } x\in\R.
    \end{align*}
\end{defi}

Polynomial ensembles of additive derivative type now arise from functions of additive derivative type, whenever they give rise to a \textit{positive} polynomial ensemble. 

\begin{defi}\label{PE_ADT_def}
      We call the polynomial ensemble associated with $v_1,\dots,v_n:\R_{>0}\to\R$ of additive derivative type with respect to $\om:\R\to\R$, if the underlying functions $v_1,\dots,v_n$ have this property. In that case, we write 
        $$\text{PE}[v_1,\dots,v_n] = \text{PE}_{\text{ADT}}[n;\om].$$
\end{defi}

Again, in the literature, there is some minor ambiguity about which space $\om^{(n-2)}$ belongs to. Originally, in \cite{KR:add_DT}, no explicit conditions on $\om$ are mentioned. Later in \cite{FKK:PolyaE}, Förster, Kieburg and Kösters demanded that $\om_n^{(n-2)}$ is differentiable on $\R$. For our purposes and because of the similarities to the multiplicative setting, we prefer the conditions stated above, which are less strong.
\medbreak

By now, polynomial ensembles of multiplicative and additive derivative type are relatively well-understood: in \cite{FKK:PolyaE}, the functions $\om$ in Definition~\ref{MDT_def}~\&~\ref{ADT_def} that give rise to a \textit{positive} polynomial ensemble of derivative type were characterized in terms of Pólya frequency functions. We refer to \cite[Ch. 7]{Karlin} for an introduction to Pólya frequency functions. As a consequence, such polynomial ensembles of derivative type are sometimes called Pólya ensembles. \\
Recently, several other notions of derivative type, compatible with other symmetry classes of random matrices, have been introduced as well. For example in \cite{FKK:PolyaE}, notions for random complex rectangular, Hermitian anti-symmetric and Hermitian anti-self-dual matrices were defined. In \cite{KLZF:cyclic_Polya}, a notion of derivative type compatible with random unitary matrices was considered.

\section{Main results}

Initially, the main goal of this work was to provide many examples of multiple orthogonal polynomial ensembles of derivative type. This was motivated by the fact that the associated multiple orthogonal polynomials would have desirable (de)composition properties with respect to the finite free multiplicative and additive convolution from finite free probability, see \cite{MSS:fin_free_conv}. Such (de)composition properties have recently received increased attention from the orthogonal polynomial community, because they are helpful in studying the properties of their zeros as explained in \cite{MFMP1,MFMP2}. A deeper connection between orthogonality and (de)compositions using the finite free multiplicative and additive convolution is suggested by an open problem posed by Martínez-Finkelshtein at the Hypergeometric and Orthogonal Polynomials Event in Nijmegen (May, 2024), see~\cite{HOPE}. Remarkably, after making the appropriate identifications, the open problem turns out to be equivalent to the one that drove the previously mentioned research around products and sums of random matrices. We will make this connection precise in Section \ref{FFP}. Doing so, we can formulate a partial solution to the open problem.
\medbreak

Ultimately, our study of multiple orthogonal polynomial ensembles of derivative type did not only lead to many new examples, but to a characterization of them. It turns out that the eigenvalue densities of products of JUE and LUE matrices are essentially the only multiple orthogonal polynomial ensembles of multiplicative derivative type and that the eigenvalue densities of sums of dilated LUE and GUE matrices are examples of multiple orthogonal polynomial ensemble of additive derivative type, but not the only ones. By identifying such ensembles as multiple orthogonal polynomial ensembles, a Riemann-Hilbert problem becomes available to perform an asymptotic analysis of the associated correlation kernel. Previously, such a connection was only known for the squared singular value densities of products of Ginibre matrices, see \cite{KZ:prod_gin}. Such densities arise as particular cases of the eigenvalue densities of products of LUE matrices.
\medbreak

Essential in the characterization, is the use of an appropriate integral transform. In the multiplicative setting, this role is played by the Mellin transform. In the additive setting, the Laplace transform is used. These transforms have several useful properties, which we will discuss in Section~\ref{PRELIM}. The idea of using such integral transforms is that the derivative type structure, which is a priori analytic in nature, becomes algebraic after applying the appropriate transform. The latter will be more prone to analysis and will reveal some implicit structure on the functions associated to a polynomial ensemble of derivative type. Results of this kind will be proven in Section \ref{DT}. In the particular case of a multiple orthogonal polynomial ensemble of derivative type, the algebraic structure will enable a characterization of the underlying weights.
\medbreak

A class of weights that give rise to multiple orthogonal polynomial ensembles of multiplicative derivative type is described in the result below. We will prove this in Section~\ref{CLASS_M}. The appropriate space $L^1_{\MT{},\Sigma}(\R_{>0})$ to which these weights should belong will be defined in Section~\ref{PRELIM_M}.

\begin{prop} \label{MOPE_MDT_w_om}
Let $n\in\N$ and take $\vec{n}\in\mathcal{S}^r$ with $\sz{n}=n$. Suppose that the Mellin transforms of $w_{1,1},\dots,w_{1,r}\in L^{1}_{\MT{},\Sigma}(\R_{>0})$ are given by
\begin{equation} \label{MOPE_MDT_w}
        \MT{w_{1,j}}(s) = c^s \, \prod_{i=1}^r \frac{\Gamma(s+a_i)^{d_1(i)}}{\Gamma(s+b_i)^{d_2(i)}} \frac{s^{j-1}}{\prod_{i=1}^j (s+b_i)^{d_2(i)}}, \quad s\in\Sigma,
\end{equation}
for all $j\in\{1,\dots,r\}$, and the Mellin transform of $\om_{1,n}\in L^{1}_{\MT{},\Sigma}(\R_{>0})$ is given by
\begin{equation} \label{MOPE_MDT_om}
    \MT{\om}_{1,n}(s) = c^s \prod_{i=1}^{r} \frac{\Gamma(s+a_i)^{d_1(i)}}{\Gamma(s+b_i+n_i)^{d_2(i)}},\quad s\in\Sigma,
\end{equation}
for some $a_i\in\C$, $b_i,c\in\R$, $d_1,d_2:\{1,\dots,r\}\to\{0,1\}$ with all $d_1(i)=1$ or all $d_2(i)=1$. Then $\text{MOPE}[n;w_{1,1},\dots,w_{1,r}]=\text{PE}_{\text{MDT}}[n;\om_{1,n}]$.
\end{prop}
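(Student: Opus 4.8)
The plan is to push everything through the Mellin transform, where both ensembles take the shape ``one fixed function times a space of polynomials'', and then to read off the statement as an algebraic identity between those polynomial spaces. First I would use the properties of the Mellin transform recorded in Section~\ref{PRELIM_M}: on $L^1_{\MT{},\Sigma}(\R_{>0})$ the transform is injective, it sends $x\frac{d}{dx}$ to multiplication by $-s$, i.e.\ $\MT{}\bigl[(x\tfrac{d}{dx})g\bigr](s)=-s\,\MT{g}(s)$, and it sends multiplication by a power to a shift, $\MT{}\bigl[x^{\alpha}g\bigr](s)=\MT{g}(s+\alpha)$. Writing $\Pi_{<n}$ for the space of polynomials of degree $<n$, it follows that the defining functions of $\text{PE}_{\text{MDT}}[n;\om_{1,n}]$ have Mellin transforms spanning $\MT{\om}_{1,n}(s)\cdot\Pi_{<n}$, whereas those of $\text{MOPE}[n;w_{1,1},\dots,w_{1,r}]$ have Mellin transforms spanning $\bigl\{\MT{w_{1,j}}(s+k-1):1\le k\le n_j,\ 1\le j\le r\bigr\}$ (all viewed as meromorphic functions, the powers $x^{k-1}w_{1,j}$ again lying in $L^1_{\MT{},\Sigma}(\R_{>0})$ by its definition). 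Since a polynomial ensemble depends only on the span of its defining functions and the Mellin transform is injective, the claim is equivalent to the equality of these two spans.

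The core step is the computation expressing each $\MT{w_{1,j}}(s+k-1)$ as an explicit polynomial multiple of $\MT{\om}_{1,n}(s)$. Substituting $s\mapsto s+k-1$ in \eqref{MOPE_MDT_w}, dividing by \eqref{MOPE_MDT_om}, simplifying the Gamma quotients via $\Gamma(s+\alpha+m)/\Gamma(s+\alpha)=(s+\alpha)_{m}$ (rising factorial, $(z)_0=1$) and using $\Gamma(z+1)=z\Gamma(z)$ to absorb the factor $\prod_{i=1}^j(s+b_i+k-1)^{d_2(i)}$ from the denominator of \eqref{MOPE_MDT_w}, one finds the factorization $\MT{w_{1,j}}(s+k-1)=Q_{j,k}(s)\,\MT{\om}_{1,n}(s)$, where
\begin{multline*}
 Q_{j,k}(s)=c^{k-1}(s+k-1)^{j-1}\prod_{i=1}^r(s+a_i)_{k-1}^{d_1(i)}\\
 \times\prod_{i=1}^{j}(s+b_i+k)_{n_i-k}^{d_2(i)}\prod_{i=j+1}^r(s+b_i+k-1)_{n_i-k+1}^{d_2(i)}.
\end{multline*}
That $Q_{j,k}$ is genuinely a polynomial (every pole of the Gamma quotients cancels) uses precisely that $\vec{n}\in\mathcal{S}^r$ together with $1\le k\le n_j$, which force $n_i-k\ge0$ for $i\le j$ and $n_i-k+1\ge0$ for $i>j$; and a count of the factors, again using the step-line relations, gives $\deg Q_{j,k}\le n-1$ in both admissible regimes (all $d_1(i)=1$, or all $d_2(i)=1$). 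Hence the first span sits inside $\MT{\om}_{1,n}(s)\cdot\Pi_{<n}$, which is one inclusion.

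It remains to match dimensions. Since $\dim\Pi_{<n}=n=\sz{n}$ equals the number of admissible pairs $(j,k)$ and $\MT{\om}_{1,n}\not\equiv0$, it is enough that the $Q_{j,k}$ form a basis of $\Pi_{<n}$, i.e.\ are linearly independent. If all $d_2(i)=0$ (so necessarily all $d_1(i)=1$) this is immediate: there $\deg Q_{j,k}=r(k-1)+(j-1)$, and over the admissible range these exponents are pairwise distinct and, by $\sz{n}=n$, exhaust $\{0,\dots,n-1\}$. Outside that regime several $Q_{j,k}$ may share a degree---when all $d_1(i)=d_2(i)=1$ every $Q_{j,k}$ has degree exactly $n-1$---so one must instead exploit the zero patterns of the rising factorials: ordering the index set $\{(j,k)\}$ suitably and evaluating at an adapted sequence of points drawn from $\{-a_i-\ell\}$ and $\{-b_i-\ell\}$ should produce a triangular array of values with nonzero diagonal, forcing linear independence. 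Making this triangularity precise---controlling how the factors $(s+a_i)_{k-1}^{d_1(i)}$, the Pochhammer symbols in the $b_i$, and the distinguishing factor $(s+k-1)^{j-1}$ interact, so that each $Q_{j,k}$ is singled out by a functional annihilating all later ones---is the part I expect to be the main obstacle. Once it is in place, both inclusions hold on the Mellin side, and inverting the transform yields $\mathcal{W}(n;w_{1,1},\dots,w_{1,r})=\spn{(x\tfrac{d}{dx})^{j-1}[\om_{1,n}(x)]}_{j=1}^n$, that is $\text{MOPE}[n;w_{1,1},\dots,w_{1,r}]=\text{PE}_{\text{MDT}}[n;\om_{1,n}]$.
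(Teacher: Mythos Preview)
Your reduction to the Mellin side and the explicit formula for $Q_{j,k}$ are correct, and the degree bound $\deg Q_{j,k}\le n-1$ does follow from the step-line constraints exactly as you say. Where you diverge from the paper is in how the remaining linear-independence step is handled. The paper does \emph{not} attempt to prove independence of all $Q_{j,k}$ at once; instead it argues by induction on $n$ through Proposition~\ref{MDT_SR}. Writing $n=r\eta+j$, the single new function added at step $n$ is $x^{\eta}w_{1,j}$, and one only has to check that the rational function $\MT{w_{1,j}}(s+\eta)/\MT{w_{1,1}}(s)$ satisfies the degree condition $\max\{\deg\text{num},\deg\text{denom}\}=n-1$ required by Proposition~\ref{MDT_SR}~$ii)$. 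That condition is immediate from the hypothesis ``all $d_1(i)=1$ or all $d_2(i)=1$'', and then Proposition~\ref{DT_ALG} (behind~\ref{MDT_SR}) supplies both the equality of spans and the identification $\MT{\om_n}=\MT{\om_{1,n}}$. In effect, the paper trades the global basis problem for a sequence of one-step degree checks, each of which is a one-line computation.

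Your proposed triangularity argument via evaluation at the points $-a_i-\ell$, $-b_i-\ell$ is the fragile part: nothing in the hypotheses prevents coincidences among the $a_i$, the $b_i$, or between the two families (or with integer shifts thereof), so the zero patterns of the Pochhammer factors can collapse and the hoped-for triangular matrix of values can acquire zero diagonal entries. This is not fatal---generically your scheme works---but making it robust for all admissible parameters would essentially force you to reinvent the inductive mechanism of Proposition~\ref{DT_ALG}. A cleaner way to close your argument within your own framework is to note that, once you have $Q_{j,k}\in\Pi_{<n}$, what you really need is that each $Q_{j,k}$ is \emph{not} in the span of the $Q_{j',k'}$ with $r(k'-1)+j'<r(k-1)+j$; writing $Q_{j,k}$ as numerator over $\prod_i(s+b_i)_{n_i}^{d_2(i)}/(s+b_1)^{d_2(1)}$ times $\MT{w_{1,1}}$ and comparing with the previous ones reduces exactly to the degree condition the paper checks.
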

\begin{rem} \label{MDT_MR_ALT}
    An alternative way to write down \eqref{MOPE_MDT_w} is to relabel the parameters such that we use a vector $\vec{a}\in\C^p$ on top and a vector $\vec{b}\in\R^q$ on the bottom with $\max\{p,q\}=r$. In order to describe the second factor, we then have to make use of an increasing and surjective function $\sigma:\{1,\dots,r\}\to\{1,\dots,q\}$. Doing so, we can write    
        $$\MT{w_{1,j}}(s) = c^s \,  \frac{\prod_{i=1}^p \Gamma(s+a_i)}{\prod_{i=1}^q \Gamma(s+b_i)} \frac{s^{j-1}}{\prod_{i=1}^{\sigma(j)} (s+b_i)}, \quad s\in\Sigma.$$
    Although this seems more insightful, it would be more involved to work with in the proofs.
\end{rem}

Most of the systems of weights described in \eqref{MOPE_MDT_w} have already been studied in \cite{L:MOP_HG,Wolfs}. In these works, the authors assume that
$$ d_1(i) = \begin{cases}
    1,\quad i\in\{1,\dots,p\}, \\
    0, \quad i\in\{p+1,\dots,r\},
\end{cases}\quad 
d_2(i) = \begin{cases}
    1,\quad i\in\{1,\dots,q\}, \\
    0, \quad i\in\{q+1,\dots,r\},
\end{cases} $$
with $\max\{p,q\}=r$, which would lead to weights with moments of the form
    $$\MT{w_{1,j}}(s) = c^s \, \frac{\prod_{i=1}^p \Gamma(s+a_i)}{\prod_{i=1}^q \Gamma(s+b_i)} \frac{s^{j-1}}{\prod_{i=1}^{\max\{j,q\}} (s+b_i)},\quad s\in \Z_{\geq 1}.$$
As explained in Remark \ref{MDT_MR_ALT}, the weights in \eqref{MOPE_MDT_w} arise in a similar way, but involve a slightly more general second factor  
$$\frac{s^{j-1}}{\prod_{i=1}^{\sigma(j)} (s+b_i)},$$
in terms of an increasing and surjective function $\sigma:\{1,\dots,r\}\to\{1,\dots,q\}$. \\
As explained in \cite{L:MOP_HG,Wolfs}, throughout the years, most of the systems of two weights covered by \eqref{MOPE_MDT_w} have already been studied in the context of multiple orthogonal polynomials, see \cite{VAYakubovich,KZ:prod_gin,SmetVA,LimaLoureiro1,LimaLoureiro2,VAWolfs}.
\medbreak

The characterization below shows that the ensembles associated to the above weights cover all the multiple orthogonal polynomial ensembles of multiplicative derivative type. We will prove the remaining equivalence between $i)$ and $ii)$ in Section \ref{CLASS_M}. Note that in $i)$, we only consider multiple orthogonal polynomial ensembles in which the underlying system of weights is independent of the number of particles that are modeled. The weight in the associated derivative type ensembles doesn't need to satisfy the same property (which is why we only indicate the class here and not the specific weight). 

\begin{thm} \label{MOPE_MDT}
    Let $v_1,\dots,v_n\in L^{1}_{\MT{},\Sigma}(\R_{>0})$ and $r<n$. Then the following are equivalent:
    \begin{itemize}
        \item[$i)$] there exists $w_1,\dots,w_r:\R_{>0}\to\R$ such that $\text{PE}[v_1,\dots,v_n]=\text{MOPE}[n;w_1,\dots,w_r]$ and $\text{MOPE}[m;w_1,\dots,w_r]$ is of multiplicative derivative type for all $m\in\{1,\dots,n\}$.
        \item [$ii)$] $\text{PE}[v_1,\dots,v_n]=\text{MOPE}[n;w_{1,1},\dots,w_{1,r}]$ with $w_{1,1},\dots,w_{1,r}$ as in \eqref{MOPE_MDT_w},
        \item [$iii)$] $\text{PE}[v_1,\dots,v_n]=\text{PE}_{\text{MDT}}[n;\om_{1,n}]$ with $\om_{1,n}$ as in \eqref{MOPE_MDT_om}.
    \end{itemize}
\end{thm}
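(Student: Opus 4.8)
The plan is to establish the cycle of implications $ii) \Rightarrow iii) \Rightarrow i) \Rightarrow ii)$, the first two of which are essentially supplied by Proposition \ref{MOPE_MDT_w_om} and its consequences, so the real work is in $i) \Rightarrow ii)$. For $ii) \Rightarrow iii)$, I would simply invoke Proposition \ref{MOPE_MDT_w_om}, which directly gives $\text{MOPE}[n;w_{1,1},\dots,w_{1,r}] = \text{PE}_{\text{MDT}}[n;\om_{1,n}]$. For $iii) \Rightarrow i)$, I would again use Proposition \ref{MOPE_MDT_w_om}, but now applied at every level $m \in \{1,\dots,n\}$: since the formulas \eqref{MOPE_MDT_w}--\eqref{MOPE_MDT_om} make sense for any size and the multi-index $\vec{n}$ varies along the step-line, one checks that $\text{MOPE}[m;w_{1,1},\dots,w_{1,r}]$ is of multiplicative derivative type with respect to $\om_{1,m}$ (the only $m$-dependence being in the shift $b_i \mapsto b_i + (\vec{m})_i$ in the denominator Gamma factors), which is exactly what clause $i)$ demands.

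The substantive direction is $i) \Rightarrow ii)$. Here I would use the key structural results of Section \ref{DT} (the ones announced in the introduction, stating that the multiplicative derivative type condition becomes \emph{algebraic} after the Mellin transform). Concretely, write $M = \mathcal{M}w_{1,j}$ etc.; the condition that $\text{MOPE}[m;w_1,\dots,w_r]$ is of multiplicative derivative type for \emph{all} $m$ should translate, via the Mellin transform, into a recursive/divisibility relation among the $\mathcal{M}w_j(s)$: passing from level $m$ to level $m+1$ adds one basis function of the form $(x\tfrac{d}{dx})^{\ell}[\om_{1,m}]$, whose Mellin transform is $s^\ell \mathcal{M}\om_{1,m}(s)$ (using the transform's behaviour under the Euler operator $x\tfrac{d}{dx}$, cf. Section \ref{PRELIM_M}). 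Matching the span on the weight side with the span on the derivative-type side forces, for each $j$, that $\mathcal{M}w_{1,j}(s)$ is obtained from $\mathcal{M}\om_{1,m}(s)$ by multiplication by $s^{j-1}$ and division by a product of linear factors $(s+b_i)^{d_2(i)}$; and the step-line structure pins down \emph{which} linear factors appear, namely $\prod_{i=1}^j$. Comparing the $m$-dependence across levels then forces $\mathcal{M}\om_{1,m}$ to have the shape \eqref{MOPE_MDT_om}, i.e. a ratio of Gamma functions whose arguments are shifted in a prescribed, affine way; unwinding this gives \eqref{MOPE_MDT_w}. The constraint $r<n$ guarantees enough levels $m$ are available for the recursion to rigidly determine all parameters. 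The hypothesis that the $w_j$ themselves lie in $L^1_{\mathcal{M},\Sigma}(\R_{>0})$ ensures all Mellin transforms are defined on a common strip $\Sigma$ and that the algebraic identities there actually characterize the functions.

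I expect the main obstacle to be the final step, where one must show that the only meromorphic functions on $\Sigma$ satisfying the derived functional/divisibility recursion — compatibly across all $m$ — are exactly the Gamma-ratio expressions \eqref{MOPE_MDT_om} (up to the $c^s$ factor and the upper-triangular freedom noted after Definition \ref{def_MOPE_}). This is a rigidity statement: one knows $\mathcal{M}\om_{1,m+1}(s)/\mathcal{M}\om_{1,m}(s)$ must be a ratio of shifted linear factors, and iterating a first-order functional equation of the type $F(s+1) = R(s) F(s)$ with $R$ rational is precisely what produces Gamma functions (via the Bohr--Mollerup / Wielandt-type uniqueness, or directly by inspecting poles and zeros and the growth of $\mathcal{M}$ along vertical lines, which $L^1_{\mathcal{M},\Sigma}$ membership should control). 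Handling the case split between "all $d_1(i)=1$" and "all $d_2(i)=1$" — i.e. whether the numerator or denominator family of Gammas is the "full" one — will require separate but parallel bookkeeping, and making sure the increasing surjection $\sigma$ (equivalently, the exponents $d_2(i)$ and the index ranges) is correctly recovered from the per-level span matching is the fiddly combinatorial heart of the argument.
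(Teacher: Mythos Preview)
Your proposal is correct and follows essentially the same route as the paper: the cycle $ii)\Rightarrow iii)\Rightarrow i)$ is handled by Proposition~\ref{MOPE_MDT_w_om} applied at every level $m$, and the main implication $i)\Rightarrow ii)$ proceeds exactly as you outline --- the Mellin transform converts the multiplicative derivative type condition into the algebraic span identity of Theorem~\ref{MDT_ALG}, the compatibility across levels (Proposition~\ref{DT_ALG}) forces $\mathcal{M}w_1(s+1)/\mathcal{M}w_1(s)$ to be a fixed rational function, and a Bohr--Mollerup type uniqueness (Proposition~\ref{MT_DE_uni}) identifies $\mathcal{M}w_1$ as the Gamma ratio in \eqref{MOPE_MDT_w}.

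One point you leave implicit but the paper makes explicit: the Bohr--Mollerup step requires the weights $w_j$ to be \emph{a.e.\ positive}, since the argument goes through log-convexity of $\mathcal{M}w_1$ (via H\"older). Your alternative suggestion of ``inspecting poles and zeros and growth along vertical lines'' would not by itself pin down the solution of $F(s+1)=R(s)F(s)$ --- one can always multiply by a bounded periodic factor like $e^{\sin(\pi s)}$ without disturbing $L^1_{\mathcal{M},\Sigma}$ membership. The paper isolates this as the one genuinely analytic obstruction: without positivity one recovers only the integer moments (Theorem~\ref{MOPE_MDT_CORE_MOM}), and positivity is what upgrades this to the full identity on $\Sigma$.
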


Generically, the ensembles described in Theorem \ref{MOPE_MDT} $ii)$ and $iii)$ arise as the joint eigenvalue densities of products of JUE and LUE matrices. We will explain this in detail in Section \ref{EX_M}.
\medbreak

A class of weights that gives rise to multiple orthogonal polynomial ensembles of additive derivative type is described in the following result. We will prove this in Section \ref{CLASS_A}. The appropriate space $L^1_{\LT{},\Sigma}(\R)$ to which these weights should belong will be defined in Section \ref{PRELIM_A}.

\begin{prop} \label{MOPE_ADT_w_om}
Let $n\in\N$ and take $\vec{n}\in\mathcal{S}^r$ with $\sz{n}=n$. Suppose that the Laplace transforms of $w_{2,1},\dots,w_{2,r}\in L^{1}_{\LT{},\Sigma}(\R)$ are given by
\begin{equation} \label{MOPE_ADT_w}
        \LT{w_{2,j}}(s) = \exp\left( c  \int_{s_0}^{s} \prod_{i=1}^r \frac{(t+a_i)^{d_1(i)}}{(t+b_i)^{d_2(i)}} dt \right) \frac{s^{j-1}}{\prod_{i=1}^{j} (s+b_i)^{d_2(i)}}, \quad s\in\Sigma,
\end{equation}
for all $j\in\{1,\dots,r\}$, and the Laplace transform of $\om_{2,n}\in L^{1}_{\LT{},\Sigma}(\R)$ is given by
\begin{equation} \label{MOPE_ADT_om}
    \LT{\om}_{2,n}(s) =  \exp\left( c  \int_{s_0}^{s} \prod_{i=1}^r \frac{(t+a_i)^{d_1(i)}}{(t+b_i)^{d_2(i)}} dt \right) \frac{1}{\prod_{i=1}^{r} (s+b_i)^{d_2(i) n_i }},\quad s\in\Sigma,
\end{equation}
for some $a_i\in\C$, $b_i,c,s_0\in\R$, $d_1,d_2:\{1,\dots,r\}\to\{0,1\}$ with all $d_1(i)=1$ or all $d_2(i)=1$. Then $\text{MOPE}[n;w_{2,1},\dots,w_{2,r}]=\text{PE}_{\text{ADT}}[n;\om_{2,n}]$.
\end{prop}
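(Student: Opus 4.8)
\textbf{Proof plan for Proposition~\ref{MOPE_ADT_w_om}.}
The plan is to mirror the multiplicative argument behind Proposition~\ref{MOPE_MDT_w_om}, replacing the Mellin transform by the Laplace transform throughout, and exploiting the fact that the Laplace transform converts the additive derivative type operation $\om \mapsto \om'$ into multiplication by $s$ (up to boundary terms, which the chosen space $L^1_{\LT{},\Sigma}(\R)$ should control). Concretely, I would first verify that the span identity defining a multiple orthogonal polynomial ensemble, namely
$$\mathcal{W}(n;w_{2,1},\dots,w_{2,r}) = \spn{x^{k-1} w_{2,j}(x) \mid k=1,\dots,n_j,\ j=1,\dots,r},$$
can be tested on the Laplace side: since $x^{k-1} w_{2,j}(x)$ Laplace-transforms to (a constant times) $(-1)^{k-1}\frac{d^{k-1}}{ds^{k-1}}\LT{w_{2,j}}(s)$, the relevant space is the span of the derivatives up to order $n_j-1$ of each $\LT{w_{2,j}}$. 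So the first real step is the computation: show that the $\R$-linear span of $\{\frac{d^{k-1}}{ds^{k-1}}\LT{w_{2,j}} : k\le n_j,\ j\le r\}$ equals the span of $\{\frac{d^{k-1}}{ds^{k-1}}\LT{\om}_{2,n} : k = 1,\dots,n\}$ (the image on the Laplace side of the additive-derivative-type functions $(x\frac{d}{dx})$—no wait, here simply $\frac{d^{j-1}}{dx^{j-1}}\om_{2,n}$, whose transforms are $s^{j-1}\LT{\om}_{2,n}(s)$ up to lower-order corrections).

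The heart of the matter is therefore the algebraic identity between two spans of functions of the form $E(s)\cdot R(s)$, where $E(s) = \exp\!\big(c\int_{s_0}^s \prod_i (t+a_i)^{d_1(i)}(t+b_i)^{-d_2(i)}\,dt\big)$ is a common ``exponential'' prefactor and $R$ ranges over rational functions with controlled pole structure. Differentiating $E(s)$ produces $E(s)$ times the rational function $c\prod_i (s+a_i)^{d_1(i)}(s+b_i)^{-d_2(i)}$, so each derivative of $E(s)R(s)$ is again $E(s)$ times a rational function, and the span question reduces to a span question about rational functions alone: namely that
$$\spn{ s^{j-1}\prod_{i=1}^r (s+b_i)^{-d_2(i)n_i} \text{ and its derivatives to order } n-1}$$
coincides with
$$\spn{ \frac{s^{j-1}}{\prod_{i=1}^j (s+b_i)^{d_2(i)}} \text{ and its derivatives to order } n_j-1,\ j=1,\dots,r }.$$
I would split into the two allowed cases, all $d_2(i)=1$ (so the denominators $\prod_{i=1}^j(s+b_i)$ genuinely grow) and all $d_1(i)=1$ (so all $d_2(i)$ may vanish and the rational factors are polynomials $s^{j-1}$), since the pole/degree bookkeeping differs. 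In the denominator-heavy case, the key combinatorial fact is that partial-fraction expansions of $\frac{d^{k}}{ds^k}\big[s^{j-1}/\prod_{i=1}^j(s+b_i)\big]$, as $k$ and $j$ vary subject to $k < n_j$, exactly exhaust the space spanned by $\{(s+b_i)^{-\ell} : 1\le \ell\le n_i\}$ together with the constants—which is precisely what derivatives of $s^{j-1}\prod_i (s+b_i)^{-n_i}$ span; a dimension count (both sides have dimension $n = \sz{n}$, with $\vec n \in \mathcal S^r$) plus a containment in one direction closes it. This is the same bookkeeping that appears in the Mellin-side proof, so I expect it to go through after the Laplace-specific reductions.

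The steps, in order: (1) record the Laplace-transform dictionary — $x^{k-1}w \mapsto$ $(k-1)$-st derivative, $\om^{(j-1)} \mapsto s^{j-1}\LT{\om} + \text{boundary}$ — and confirm membership of all functions in $L^1_{\LT{},\Sigma}(\R)$ so that injectivity of $\LT{}$ on $\Sigma$ lets us pass between the analytic span identities and their transforms; (2) factor out the common exponential $E(s)$ and reduce both span identities to identities among rational (or polynomial) functions; (3) prove the rational-function span identity by partial fractions and a dimension count, handling the $d_2\equiv 1$ and $d_1\equiv 1$ cases separately; (4) translate back: the coincidence of spans on the Laplace side gives $\mathcal{W}(n;w_{2,1},\dots,w_{2,r}) = \spn{\om_{2,n}^{(j-1)}}_{j=1}^n$, which is exactly the statement $\text{MOPE}[n;w_{2,1},\dots,w_{2,r}] = \text{PE}_{\text{ADT}}[n;\om_{2,n}]$ (using that the additive derivative type condition of Definitions~\ref{ADT_def}--\ref{PE_ADT_def} is a span identity and that $\om_{2,n} \in C^{n-2}(\R)$ with $\om_{2,n}^{(n-2)} \in AC_{\text{loc}}(\R)$ follows from the growth/decay encoded in its Laplace transform). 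The main obstacle I anticipate is step (2)'s boundary-term control: unlike the Mellin setting where $(x\frac{d}{dx})$ has clean behavior, here I must ensure that the lower-order polynomial corrections arising from integration by parts (the terms $\sum_{\ell} x^{\ell}\om^{(j-1-\ell)}(0^+)$ or the analogous boundary contributions) either vanish because of the decay built into $L^1_{\LT{},\Sigma}(\R)$ or already lie in the span under consideration; getting the space $L^1_{\LT{},\Sigma}(\R)$ to do exactly this work — presumably it is defined in Section~\ref{PRELIM_A} precisely so that these boundary terms are harmless — is where the care is needed, and it is also the point that forces the ``all $d_1(i)=1$ or all $d_2(i)=1$'' hypothesis, since a mixed configuration would spoil the clean factorization of the exponential prefactor's logarithmic derivative.
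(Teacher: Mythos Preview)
Your overall strategy---pass to the Laplace side, factor out the common exponential $E(s)$, and reduce to a span identity among rational functions---is exactly the paper's route, but the execution differs and your formulation in step~(3) needs correction. First, your main anticipated obstacle (boundary terms from $\om^{(j-1)}\mapsto s^{j-1}\LT{\om}$) does not exist: Proposition~\ref{LT_DIFF} shows that for $f\in L^1_{\LT{},\Sigma}(\R)$ with $\Sigma\subset\R_{>0}$ one has $\LT{F^{(r)}}(s)=s^r\LT{f}(s)$ exactly, with no lower-order corrections, so the additive-derivative-type side becomes precisely $\spn{s^{k-1}\LT{\om_{2,n}}(s)}_{k=1}^n$. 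Second, your written rational spans are off: after dividing by $E(s)$ the ADT side is the span of $s^{k-1}\prod_i(s+b_i)^{-d_2(i)n_i}$ for $k=1,\dots,n$ (powers of $s$, not derivatives), while the MOPE side is the span of iterates of the first-order operator $R\mapsto R'+c\rho R$ (with $\rho=\prod_i(t+a_i)^{d_1(i)}(t+b_i)^{-d_2(i)}$) applied to each $s^{j-1}/\prod_{i\le j}(s+b_i)^{d_2(i)}$---not plain derivatives of those rational seeds. This operator shifts both numerator and denominator degrees in a controlled way, and the hypothesis ``all $d_1(i)=1$ or all $d_2(i)=1$'' is what guarantees the correct degree count at each step, not a factorization issue with $E'$.

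The paper organizes this differently: it first packages the span equality (via Theorem~\ref{ADT_ALG}) into the equivalent rational-ratio criterion of Proposition~\ref{ADT_SR}, namely that $\LT{v_m}/\LT{v_1}=p_{m-1}(s)/\prod_{i<m}(s+b_i)^{d(i)}$ with a sharp degree condition, and then verifies this criterion for $v_m=(\LT{w_{2,j}})^{(\eta)}$ by induction on $n$, tracking the numerator and denominator degrees through one application of the operator $R\mapsto R'+c\rho R$. Your direct partial-fraction comparison could be made to work once the spans are written correctly, but the paper's inductive framing via Proposition~\ref{ADT_SR} avoids having to identify the full rational span at once and makes the role of the degree hypothesis transparent.
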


\begin{rem}
    Similarly as in Remark \ref{MDT_MR_ALT}, there is an alternative way to write down \eqref{MOPE_ADT_w} in terms of parameters $\vec{a}\in\C^p$ and $\vec{b}\in\R^q$ with $\max\{p,q\}=r$. Indeed, after relabeling the parameters, we can write    
        $$\LT{w_{2,j}}(s) = \exp\left( c  \int_{s_0}^{s} \frac{\prod_{i=1}^p (t+a_i)}{\prod_{i=1}^q (t+b_i)} dt \right) \frac{s^{j-1}}{\prod_{i=1}^{\sigma(j)} (s+b_i)}, \quad s\in\Sigma,$$
    where $\sigma:\{1,\dots,r\}\to\{1,\dots,q\}$ is an increasing and surjective function.
\end{rem}

Generally, the systems of weights in \eqref{MOPE_ADT_w} have not been studied before, at least not in the context of multiple orthogonal polynomials. Only particular cases of systems of two weights have been studied in \cite{CVA:MOP_IB,LVA}. The former handles weights related to the $I$-Bessel functions, while one of the cases in the latter considers weights related to the Airy functions.
\medbreak

The characterization below shows that the ensembles associated to the above weights cover all the multiple orthogonal polynomial ensembles of additive derivative type. We will prove the remaining equivalence between $i)$ and $ii)$ in Section~\ref{CLASS_A}. Note that in $i)$, we again restrict to multiple orthogonal polynomial ensembles in which the underlying weights are independent of the number of particles that are modeled. The weight in the associated derivative type ensembles doesn't need to satisfy the same property (which is why we only indicate the class here and not the specific weight).

\begin{thm} \label{MOPE_ADT}
    Let $v_1,\dots,v_n\in L^{1}_{\LT{},\Sigma}(\R)$ and $r<n$. Then the following are equivalent:
    \begin{itemize}
        \item[$i)$] there exists $w_1,\dots,w_r:\R\to\R$ such that $\text{PE}[v_1,\dots,v_n]=\text{MOPE}[n;w_1,\dots,w_r]$ and $\text{MOPE}[m;w_1,\dots,w_r]$ is of additive derivative type for all $m\in\{1,\dots,n\}$.
        \item [$ii)$] $\text{PE}[v_1,\dots,v_n]=\text{MOPE}[n;w_{2,1},\dots,w_{2,r}]$ with $w_{2,1},\dots,w_{2,r}$ as in \eqref{MOPE_ADT_w},
        \item [$iii)$] $\text{PE}[v_1,\dots,v_n]=\text{PE}_{\text{MDT}}[n;\om_{2,n}]$ with $\om_{2,n}$ as in \eqref{MOPE_ADT_om}.
    \end{itemize}
\end{thm}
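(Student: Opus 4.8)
The plan is to prove Theorem~\ref{MOPE_ADT} by establishing the cycle $ii) \Rightarrow iii) \Rightarrow i) \Rightarrow ii)$, with Proposition~\ref{MOPE_ADT_w_om} already delivering $ii) \Rightarrow iii)$ (note the typo: $iii)$ should read $\text{PE}_{\text{ADT}}[n;\om_{2,n}]$), and the implication $iii) \Rightarrow i)$ being essentially immediate once one observes that the weights in \eqref{MOPE_ADT_w} have the same shape for every value of $m$: replacing $n$ by $m$ and the associated multi-index $\vec{n}$ by the unique $\vec{m}\in\mathcal{S}^r$ with $\sz{m}=m$, one sees that $\text{MOPE}[m;w_{2,1},\dots,w_{2,r}]$ is again of additive derivative type with respect to $\om_{2,m}$ (an application of Proposition~\ref{MOPE_ADT_w_om} at level $m$). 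The real content is the implication $i) \Rightarrow ii)$, which is the ``remaining equivalence'' the authors flag.

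For $i) \Rightarrow ii)$ I would work entirely on the Laplace transform side, exploiting the principle advertised in the introduction: additive derivative type becomes an algebraic condition after applying $\LT{}$. Concretely, the hypothesis gives weights $w_1,\dots,w_r$ with $\text{PE}[v_1,\dots,v_n]=\text{MOPE}[n;w_1,\dots,w_r]$ such that for each $m\le n$ the space $\mathcal{W}(m;w_1,\dots,w_r)$ coincides (a.e.) with $\spn{\om_m^{(j-1)}}_{j=1}^m$ for some generating function $\om_m$. Under Laplace transform, multiplication by $x$ becomes $-\frac{d}{ds}$ and differentiation becomes multiplication by $s$ (up to the boundary terms controlled by membership in $L^1_{\LT{},\Sigma}(\R)$, which is exactly why that space is imposed), so the derivative-type span on the right transforms into $\spn{s^{j-1}\LT{\om}_m(s)}_{j=1}^m$, while the MOPE span on the left transforms into $\spn{(-\tfrac{d}{ds})^{k-1}\LT{w_j}(s) \mid k=1,\dots,m_j,\ j=1,\dots,r}$. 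The equality of these two spans for every $m$ forces a rigid recursive structure on the functions $\LT{w_j}$. I would first extract from the $r=1$-type comparison at small $m$ that each $\LT{w_j}$ satisfies a logarithmic-derivative relation of the form $\frac{d}{ds}\log \LT{w_j}(s) = R_j(s)$ for a rational function $R_j$, then use the compatibility across consecutive levels $m$ and $m+1$ (where exactly one $m_i$ increases by one) to pin down that the $R_j$ differ only by a single simple-pole term $\frac{1}{s+b_i}$ and that the numerator/denominator data $(a_i),(b_i),(d_1,d_2)$ are common to all $j$. Solving the resulting ODEs yields precisely the exponential-of-integral prefactor times the rational factor $s^{j-1}/\prod_{i=1}^j (s+b_i)^{d_2(i)}$ in \eqref{MOPE_ADT_w}; the dichotomy ``all $d_1(i)=1$ or all $d_2(i)=1$'' should drop out of a degree/growth constraint needed for the $v_j$ to lie in $L^1_{\LT{},\Sigma}(\R)$ and for the ensemble to be nontrivial.

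The main obstacle, I expect, is the bookkeeping in the inductive step linking level $m$ to level $m+1$: one must show that the generating function $\om_m$ at each level, a priori unrelated across $m$, can be chosen compatibly (i.e.\ that passing from $m$ to $m+1$ genuinely appends one new function to a nested flag of spaces), and then translate ``the span grew by one derivative'' into ``the rational function $R_j$ acquired exactly one new simple pole.'' This is the step where the step-line hypothesis $\vec{n}\in\mathcal{S}^r$ and the surjectivity/monotonicity of the index map $\sigma$ from Remark~\ref{MDT_MR_ALT} are really used: they guarantee that the pattern in which the multi-indices $\vec{m}$ increase their components is the canonical cyclic one, which is what makes the recursion on the $\LT{w_j}$ close up. A secondary technical point is justifying the transform identities with the correct handling of constants of integration — multiplication by $x$ only transforms to $-\frac{d}{ds}$ modulo the kernel issue, so one needs the genericity/independence of the $v_j$ to rule out degenerate solutions — but this should be routine given the preliminary material on $\LT{}$ promised in Section~\ref{PRELIM_A}. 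Once $\LT{w_j}$ is identified as in \eqref{MOPE_ADT_w}, uniqueness of the Laplace transform returns the $w_j$ themselves and hence $ii)$, closing the cycle.
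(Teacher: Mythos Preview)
Your overall architecture matches the paper's: reduce everything to the Laplace side, show that the additive derivative type hypothesis for consecutive $m$ forces $(\LT{w_1})'/\LT{w_1}$ to be rational of a prescribed shape, integrate, and then recover the remaining $w_j$. The cycle $ii)\Rightarrow iii)\Rightarrow i)\Rightarrow ii)$ and your reading of $iii)\Rightarrow i)$ via Proposition~\ref{MOPE_ADT_w_om} applied at each level $m$ is exactly how the paper closes the loop (cf.\ the discussion after Theorem~\ref{MOPE_ADT_CORE}).

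Two points in your sketch of $i)\Rightarrow ii)$ are off, though. First, the dichotomy ``all $d_1(i)=1$ or all $d_2(i)=1$'' is not an integrability/growth condition; it is purely algebraic. The paper isolates a general lemma (Proposition~\ref{DT_ALG}) saying that if $\spn{\T v_j(s)}_{j=1}^m=\spn{s^{j-1}\hat\omega_m(s)}_{j=1}^m$ for every $m\le n$, then $\T v_m/\T v_1=p_{m-1}(s)/\prod_{i=1}^{m-1}(s+b_i)^{d(i)}$ with the degree condition $\max\{\deg p_{m-1},\deg\prod(s+b_i)^{d(i)}\}=m-1$; this max-degree equality is forced by linear independence of the $v_j$, and it is exactly what becomes ``all $d_1(i)=1$ or all $d_2(i)=1$'' once you apply it at level $m=r+1$. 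Second, the paper does not solve an ODE for each $\LT{w_j}$. It solves a single first-order ODE for $\LT{w_1}$, obtained from the ratio $-(\LT{w_1})'/\LT{w_1}=\LT{(xw_1)}/\LT{w_1}$ at level $m=r+1$ (this is where $r<n$ is actually used, and why ``small $m$'' will not give you any derivative of $\LT{w_j}$ at all). The remaining $\LT{w_j}$ then come out not from further ODEs but from a purely algebraic compatibility (Proposition~\ref{DT_ALG_COMP}): after an upper-triangular change of the $w_j$, the ratios $\LT{w_{j}}/\LT{w_1}$ are the explicit rational functions $s^{j-1}/\prod_{i\le j}(s+b_i)^{d_2(i)}$. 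So the ``rigid recursive structure'' you anticipate is proved once, abstractly, at the level of $\T v_m/\T v_1$, rather than by comparing $R_j$'s pairwise.
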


Several examples of the ensembles described in Theorem \ref{MOPE_ADT} $ii)$ and $iii)$ arise as the joint eigenvalue densities of sums of shifted LUE and GUE matrices, but generically, there are other examples as well. We will explain this in detail in Section \ref{EX_A}.
\medbreak

Observe that in Theorem \ref{MOPE_MDT} \& \ref{MOPE_ADT}, we didn't impose any explicit conditions on the parameters that can appear in \eqref{MOPE_MDT_w}, \eqref{MOPE_MDT_om}, \eqref{MOPE_ADT_w} \& \eqref{MOPE_ADT_om}. Results of this kind will be discussed in Section~\ref{EX}. Generally, we will only be able to provide sufficient conditions on the parameters that ensures that $\text{PE}[v_1,\dots,v_n]$ is well-defined, and in particular positive. Obtaining necessary conditions is more challenging because it is closely related to characterizing Pólya frequency functions of finite order, which has been an open problem since those of infinite order have been characterized in \cite{S:PFF}. For this reason, it would be interesting to obtain an optimal (or more optimal) set of conditions that guarantee this in the future. Such a result would then also lead to a strengthening of Theorem \ref{MOPE_MDT} \& \ref{MOPE_ADT}. For now, we do have the following complete characterization in the setting of orthogonal polynomial ensembles.
\medbreak

In the multiplicative setting, the only any point orthogonal polynomial ensembles of derivative type are the eigenvalue densities of JUE and LUE matrices. We will show this in Section \ref{EX_M}.

\begin{prop}\label{MDT_OPE}
Let $w\in L^{1}_{\MT{},\Sigma}(\R_{>0})$. Then, up to some dilation of $w$, the following are equivalent:
\begin{itemize}
        \item[$i)$] $\text{OPE}[n;w]$ is of multiplicative derivative type for all $n\in\N$,
        \item[$ii)$] $\text{OPE}[n;w]=\text{JUE}[n]$ for all $n\in\N$ or $\text{OPE}[n;w]=\text{LUE}[n]$ for all $n\in\N$.
    \end{itemize}
\end{prop}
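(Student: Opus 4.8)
The plan is to specialize Theorem~\ref{MOPE_MDT} (equivalently Proposition~\ref{MOPE_MDT_w_om}) to the case $r=1$ and then identify precisely which single-weight families $w$ satisfy the consistency requirement ``$\text{OPE}[n;w]$ is of multiplicative derivative type for all $n$''. First I would observe that $ii)\Rightarrow i)$ is essentially the content of Examples stated in the introduction together with Proposition~\ref{MOPE_MDT_w_om}: for the JUE we have $w=\mathcal{B}^{a,b}$ whose Mellin transform is a ratio of Gamma functions of the shape in \eqref{MOPE_MDT_w} with $r=1$ and both $d_1(1)=d_2(1)=1$, and for the LUE we have $w=\mathcal{G}^{a}$ with $\mathcal{M}w(s)=\Gamma(s+a)$, fitting \eqref{MOPE_MDT_w} with $r=1$, $d_1(1)=1$, $d_2(1)=0$. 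In both cases Proposition~\ref{MOPE_MDT_w_om} (or a direct computation using $(x\frac{d}{dx})$) produces the associated $\om_{1,n}$ and confirms the derivative-type structure for every $n$, so $i)$ holds.

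The substantive direction is $i)\Rightarrow ii)$. Here I would invoke Theorem~\ref{MOPE_MDT} with $r=1$: since $\text{OPE}[n;w]=\text{MOPE}[n;w]$ and the hypothesis in $i)$ of Proposition~\ref{MDT_OPE} is exactly hypothesis $i)$ of Theorem~\ref{MOPE_MDT} specialized to $r=1$, we conclude $\mathcal{M}w(s)=c^{s}\,\Gamma(s+a_1)^{d_1(1)}/\Gamma(s+b_1)^{d_2(1)}\cdot s^{0}/(s+b_1)^{d_2(1)}$ for suitable parameters, with either $d_1(1)=1$ or $d_2(1)=1$. Now I enumerate the three cases. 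If $d_1(1)=1$, $d_2(1)=0$, then $\mathcal{M}w(s)=c^{s}\Gamma(s+a_1)$, which is (up to the dilation absorbing $c$) the Mellin transform of the gamma density $\mathcal{G}^{a_1}$, i.e.\ $\text{LUE}[n;a_1]$. If $d_2(1)=1$, $d_1(1)=0$, then $\mathcal{M}w(s)=c^{s}/\big(\Gamma(s+b_1)(s+b_1)\big)=c^{s}/\Gamma(s+b_1+1)$; one checks this is not the Mellin transform of an $L^1_{\mathcal{M},\Sigma}$ weight giving a positive ensemble (the reciprocal Gamma grows, violating integrability), so this case is vacuous — this is the point I expect to require the most care. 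If $d_1(1)=d_2(1)=1$, then $\mathcal{M}w(s)=c^{s}\Gamma(s+a_1)/\big(\Gamma(s+b_1)(s+b_1)\big)=c^{s}\Gamma(s+a_1)/\Gamma(s+b_1+1)$, which (after the dilation) is the Mellin transform of the beta density $\mathcal{B}^{a_1,b_1+1}$, i.e.\ $\text{JUE}[n;a_1,b_1+1]$; the constraint $b>a>-1$ is then read off from requiring $w\in L^1_{\mathcal{M},\Sigma}(\R_{>0})$ and positivity of the ensemble.

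The main obstacle, as indicated, is ruling out (or otherwise disposing of) the degenerate case $d_1\equiv 0$, $d_2\equiv 1$: one must argue that no admissible $w$ has Mellin transform $c^{s}/\Gamma(s+b_1+1)$. The cleanest route is to recall that membership in $L^1_{\mathcal{M},\Sigma}(\R_{>0})$ (as defined in Section~\ref{PRELIM_M}) forces $\mathcal{M}w$ to be suitably bounded along vertical lines in the strip $\Sigma$, whereas $1/\Gamma(s+b_1+1)$ is unbounded there; alternatively, inverting the Mellin transform shows the candidate $w$ cannot be a positive $L^1$ function, contradicting that $\text{OPE}[n;w]$ is a genuine (positive) polynomial ensemble. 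Finally I would note that the phrase ``up to some dilation of $w$'' in the statement precisely absorbs the free parameter $c$ (a dilation $x\mapsto \lambda x$ multiplies $\mathcal{M}w(s)$ by $\lambda^{-s}$), so after normalizing $c=1$ the two surviving cases are exactly $\text{LUE}[n;a]$ for all $n$ and $\text{JUE}[n;a,b]$ for all $n$, completing $i)\Rightarrow ii)$.
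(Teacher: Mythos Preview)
Your proposal is correct and follows essentially the same route as the paper: specialize Theorem~\ref{MOPE_MDT} to $r=1$, normalize away the dilation parameter $c$, and split into cases on $(d_1,d_2)$. The only cosmetic difference is that the paper disposes of the degenerate case $d_1=0$ and extracts the parameter constraints $a>-1$, $b+1>a$ by citing Proposition~\ref{MDT_COND} (whose proof is precisely the Riemann--Lebesgue/Stirling argument you sketch inline: $1/\Gamma(s+b_1+1)$ blows up along vertical lines, contradicting $\lim_{|t|\to\infty}\MT{w}(c+it)=0$), whereas you argue this directly; both are the same mechanism.
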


In the additive setting, the only any point orthogonal polynomial ensembles of derivative type are the eigenvalue densities of LUE and GUE matrices. We will show this in Section \ref{EX_A}.

\begin{prop}\label{ADT_OPE}
Let $w\in L^{1}_{\LT{},\Sigma}(\R)$. Then, up to some dilation and shift of $w$, the following are equivalent:
\begin{itemize}
        \item[$i)$] $\text{OPE}[n;w]$ is of additive derivative type for all $n\in\N$,
        \item[$ii)$] $\text{OPE}[n;w]=\text{LUE}[n]$ for all $n\in\N$ or $\text{OPE}[n;w]=\text{GUE}[n]$ for all $n\in\N$.
    \end{itemize}
\end{prop}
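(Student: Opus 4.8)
The plan is to deduce Proposition~\ref{ADT_OPE} as the $r=1$ specialization of the already-established machinery, namely Theorem~\ref{MOPE_ADT} together with the additive analogue of the orthogonal-polynomial computation, and then to read off explicitly which parameter choices in \eqref{MOPE_ADT_w} survive when $r=1$. First I would observe that for $r=1$ the step-line multi-index degenerates to $\vec n=(n)$, so $\text{MOPE}[n;w]=\text{OPE}[n;w]=\text{PE}[w,xw,\dots,x^{n-1}w]$, and the hypothesis in $i)$ that $\text{OPE}[m;w]$ is of additive derivative type for \emph{all} $m$ is exactly the hypothesis of Theorem~\ref{MOPE_ADT}$(i)$ applied with $r=1$ (the condition $r<n$ being automatic for $n\ge 2$, and the case $n=1$ being vacuous). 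Hence by Theorem~\ref{MOPE_ADT} we get that, up to the upper-triangular freedom noted after Definition~\ref{def_MOPE_}, the Laplace transform of $w$ must have the form \eqref{MOPE_ADT_w} with $r=1$:
\begin{equation*}
\LT{w}(s)=\exp\!\left(c\int_{s_0}^{s}\frac{(t+a_1)^{d_1(1)}}{(t+b_1)^{d_2(1)}}\,dt\right)\frac{1}{(s+b_1)^{d_2(1)}},
\end{equation*}
where either $d_1(1)=1$ or $d_2(1)=1$ (the degenerate case $d_1(1)=d_2(1)=0$, i.e. $\LT{w}(s)=e^{c(s-s_0)}$, corresponds to a point mass and is excluded by $w\in L^1_{\LT{},\Sigma}(\R)$ giving a genuine positive ensemble; I would note this carefully).

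Next I would enumerate the three surviving cases and identify each with a classical ensemble after an allowed dilation/shift. If $d_1(1)=0,\ d_2(1)=1$: $\LT{w}(s)=\exp(c\int_{s_0}^s (t+b_1)^{-1}dt)\,(s+b_1)^{-1}=\kappa\,(s+b_1)^{c-1}$, so $\LT{w}(s)$ is a power of $(s+b_1)$; inverting the Laplace transform, $w(x)\propto e^{-b_1 x}x^{c-2}\mathbf 1_{x>0}$ up to constants, which after a shift is the gamma density $\mathcal G^a(x)=x^a e^{-x}$ on $\R_{>0}$, i.e. $\text{OPE}[n;w]=\text{LUE}[n;a]$. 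If $d_1(1)=1,\ d_2(1)=0$: $\LT{w}(s)=\exp(c\int_{s_0}^s (t+a_1)\,dt)=\exp(\tfrac{c}{2}(s+a_1)^2+\text{const})$, a Gaussian in $s$, whose inverse Laplace/Fourier transform is (for the sign making it integrable) a Gaussian in $x$; after a dilation and shift this is $\mathcal G(x)=e^{-x^2}$ on $\R$, i.e. $\text{OPE}[n;w]=\text{GUE}[n]$. If $d_1(1)=1,\ d_2(1)=1$ with $a_1\ne b_1$: then $\frac{t+a_1}{t+b_1}=1+\frac{a_1-b_1}{t+b_1}$, so $\LT{w}(s)=e^{cs}(s+b_1)^{c(a_1-b_1)-1}\cdot\text{const}$; this is a gamma-type transform \emph{shifted} in $s$ by the $e^{cs}$ factor, which on the $x$-side is a translation, so again $w$ is a shifted gamma density and the ensemble is $\text{LUE}[n;a]$ after a shift; if $a_1=b_1$ the ratio is $1$ and we are back in the Gaussian-degenerate sub-case $\LT{w}(s)=e^{c(s-s_0)}$, excluded as above. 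Collecting these, $i)\Rightarrow ii)$. The reverse implication $ii)\Rightarrow i)$ is the known fact (cited in the excerpt from \cite[Ex. 2.4]{FKK:PolyaE}) that the LUE and GUE weights are of additive derivative type: concretely $\om=\mathcal G^a$ gives $\spn{\om^{(j-1)}}=\spn{x^{k-1}e^{-x}}$ on $\R_{>0}$ and $\om=\mathcal G$ gives $\spn{(e^{-x^2})^{(j-1)}}=\spn{H_{j-1}(x)e^{-x^2}}=\spn{x^{j-1}e^{-x^2}}$ on $\R$, so $\text{OPE}[m;w]$ is of additive derivative type for every $m$; this direction I would dispatch in a sentence or two.

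The main obstacle I anticipate is not any single case but the bookkeeping of the \emph{up to dilation and shift} clause together with the upper-triangular equivalence from Definition~\ref{def_MOPE_}: I must check that the multiplicative constants $\kappa$ and the additive constants absorbed into the $\exp(\cdots)$ (coming from the lower limit $s_0$ and from completing the square) are genuinely harmless, i.e. that they change $\LT{w}$ only by an overall scalar and by the transforms of an affine change of variable $x\mapsto \alpha x+\beta$, and that such a change of variable sends $\text{OPE}[n;w]$ to $\text{OPE}[n;\tilde w]$ for the rescaled weight without disturbing the derivative-type structure (the operator $\frac{d}{dx}$ is covariant under affine changes of variable up to a constant, so $\spn{\om^{(j-1)}}$ is preserved). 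A secondary subtlety is ruling out the fully degenerate parameter choice $d_1(1)=d_2(1)=0$ and the coincidence $a_1=b_1$ in the mixed case: in both, $\LT{w}(s)=e^{c(s-s_0)}$ forces $w$ to be a translated Dirac mass, which does not lie in $L^1_{\LT{},\Sigma}(\R)$ as a density, so these are correctly excluded, and I would state this exclusion explicitly so that the equivalence in $ii)$ is a genuine dichotomy (LUE or GUE) with no spurious third family.
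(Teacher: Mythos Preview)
Your approach is essentially the paper's: specialize Theorem~\ref{MOPE_ADT} to $r=1$, read off the admissible Laplace transforms from \eqref{MOPE_ADT_w}, and identify them with the gamma or Gaussian weight after an affine change of variable. The paper organizes the case split by $d_2\in\{0,1\}$ alone (absorbing both values of $d_1$ into the $d_2=1$ branch via the common form $\LT{w}(s)=e^{\alpha_0+\alpha_1 s}/(s+b)^{\beta+1}$), and it invokes Proposition~\ref{ADT_COND} to pin down the parameter constraints ($\alpha_2>0$ in the Gaussian case, $\beta>-1$ in the gamma case) that you handle with phrases like ``for the sign making it integrable''; you should cite that proposition to make those steps rigorous.

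One genuine slip: in your sub-case $d_1=d_2=1$ with $a_1=b_1$ you dropped the trailing factor $(s+b_1)^{-d_2(1)}=(s+b_1)^{-1}$, so in fact $\LT{w}(s)=\kappa\,e^{cs}/(s+b_1)$, not $\kappa\,e^{cs}$. This is the Laplace transform of a shifted exponential density, i.e.\ the LUE weight with parameter $a=0$, and should be absorbed into the LUE branch rather than excluded as a Dirac mass. The final dichotomy is unaffected, but the reasoning at that point is wrong as written. (The case $d_1=d_2=0$ you worry about cannot arise at all, since the conclusion of Theorem~\ref{MOPE_ADT} already carries the constraint $d_1(1)=1$ or $d_2(1)=1$.)
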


\section{Preliminaries} \label{PRELIM}

In this section, we will review some properties of the Mellin and Laplace transform. These transforms will play an essential role in the study of functions of multiplicative and additive derivative type. Our main reference for the results involving the Mellin transform is \cite{BJ:MT}, which serves as an excellent introduction to the topic. Under the appropriate conditions, many of the results discussed there also carry over to the Laplace transform.

\subsection{Mellin transform} \label{PRELIM_M}

The Mellin transform of a function $f:\R_{>0}\to\C$ is given by
    $$(\MT{f})(s) = \int_{0}^\infty f(x) x^{s-1} dx, $$  
whenever this integral exists. For convenience, we will work with functions in the space
    $$ L_{\MT{},\Sigma}^{1}(\R_{>0}) = \{ f:\R_{>0}\to\C \mid \int_0^\infty |f(x)x^{s-1}| dx < \infty ,\ s\in\Sigma  \}, $$
where $\Sigma\subset\R_{>0}$ is an interval (or, if explicitly mentioned, a singleton). Clearly, instead of considering the interval $\Sigma$, we may also consider the strip $\Sigma+i\R$ in the right half of the complex plane. We may rephrase our results for intervals $-\Sigma$ in the left half of the complex plane by making use of the fact that $f\in L_{\MT{},\Sigma}^{1}(\R_{>0}) $ if and only if $\tilde{f}\in L_{\MT{},-\Sigma}^{1}(\R_{>0})$ with $\tilde{f}(x)=f(1/x)$.
\medbreak

The elementary examples below will play a role in the characterization of sets of multiplicative derivative type.

\begin{ex}[Eq. 5.2.1, \cite{DLMF}] \label{MDT_BETA}
    Let $b>a>-1$. The beta density $\mathcal{B}^{a,b}(x) = x^{a}(1-x)^{b-a-1}$ on $(0,1)$ has Mellin transform
        $$ \MT{\mathcal{B}^{a,b}}(s) = \Gamma(b-a) \frac{\Gamma(s+a)}{\Gamma(s+b)},\quad \text{Re}(s)>0. $$
\end{ex}

\begin{ex}[Eq. 5.12.1, \cite{DLMF}] \label{MDT_GAMMA}
    Let $a>-1$. The gamma density $ \mathcal{G}^a(x) = x^a e^{-x}$ on $\R_{>0}$ has Mellin transform
        $$ \MT{\mathcal{G}^a}(s) = \Gamma(s+a),\quad \text{Re}(s)>0.$$
\end{ex}

The following properties of the Mellin transform are well-known and will be used extensively throughout this work (see \cite[Thm. 1, 3 \& 7]{BJ:MT}). First, we recall the fact that the Mellin transform of a function $f\in L_{\MT{},\Sigma}^{1}(\R_{>0})$ is always analytic on the associated strip $\Sigma+i\R$. Second, whenever $f,g\in L_{\MT{},\{c\}}^{1}(\R_{>0})$, there exists a Mellin convolution
    $$(f\ast_{\mathcal{M}} g)(x) = \int_{0}^\infty f(t) g\left(\frac{x}{t}\right) \frac{dt}{t},\quad \text{a.e. } x\in\R_{>0},$$
with the property that
    $$ (\MT{[f\ast_{\mathcal{M}} g]})(s) = (\MT{f})(s) \cdot (\MT{g})(s),\quad s\in c+i\R. $$
Lastly, whenever $f\in L_{\MT{},\{c\}}^{1}(\R_{>0})$ and $\MT{f}\in L^1(c+i\R)$, there is an inverse transform
    $$ f(x) = \int_{c-i\infty}^{c+i\infty} \MT{f}(s) x^{-s} \frac{ds}{2\pi i},\quad \text{a.e. } x\in\R_{>0}.  $$
In particular, this implies that the Mellin transform is injective on $L_{\MT{},\{c\}}^{1}(\R_{>0})$ in an a.e.-sense.
\medbreak

The results below are less standard, but will play an important role later. The first result, known as the Riemann-Lebesgue lemma for the Mellin transform, will be used to establish whether certain functions can appear as a Mellin transform.

\begin{prop}[Thm. 2, \cite{BJ:MT}] \label{MT_RL}
    Suppose that $f\in L_{\MT{},\Sigma}^{1}(\R_{>0})$. Then, for every $s\in\Sigma$, we have
        $$ \lim_{|t|\to\infty} \MT{f}(s+it) = 0. $$
\end{prop}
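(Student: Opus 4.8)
The plan is to reduce the statement to the classical Riemann–Lebesgue lemma for the Fourier transform by means of the standard change of variables $x = e^{-u}$ that turns the Mellin transform into a Fourier transform. Concretely, fix $s = c \in \Sigma$. For $f \in L^1_{\MT{},\Sigma}(\R_{>0})$ the defining condition guarantees that $x \mapsto f(x) x^{c-1}$ lies in $L^1(\R_{>0})$. Substituting $x = e^{-u}$, $dx = -e^{-u}\,du$, one computes
\begin{align*}
    \MT{f}(c+it) = \int_0^\infty f(x) x^{c-1+it}\,dx = \int_{-\infty}^\infty f(e^{-u}) e^{-cu} e^{-itu}\,du = \widehat{g}(t),
\end{align*}
where $g(u) = f(e^{-u}) e^{-cu}$ and $\widehat{g}$ denotes the Fourier transform of $g$ (with whatever normalization convention is in force; the constant is irrelevant for the limit). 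The same substitution shows $\int_{-\infty}^\infty |g(u)|\,du = \int_0^\infty |f(x) x^{c-1}|\,dx < \infty$, so $g \in L^1(\R)$.

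Having established $g \in L^1(\R)$, I would invoke the classical Riemann–Lebesgue lemma for the Fourier transform on $\R$, which yields $\lim_{|t|\to\infty} \widehat{g}(t) = 0$, hence $\lim_{|t|\to\infty} \MT{f}(c+it) = 0$. Since $c \in \Sigma$ was arbitrary, this is exactly the claim. If one prefers a self-contained argument rather than citing the Fourier version, the classical proof applies verbatim: approximate $g$ in $L^1(\R)$ by a compactly supported step function (or a smooth compactly supported function) $g_\varepsilon$ with $\|g - g_\varepsilon\|_{L^1} < \varepsilon$; for $g_\varepsilon$ the Fourier transform tends to $0$ at infinity by an explicit computation (or one integration by parts in the smooth case); and $|\widehat{g}(t) - \widehat{g_\varepsilon}(t)| \leq \|g - g_\varepsilon\|_{L^1} < \varepsilon$ uniformly in $t$, so $\limsup_{|t|\to\infty}|\widehat{g}(t)| \leq \varepsilon$ for every $\varepsilon > 0$.

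The proof is essentially routine, so there is no real obstacle; the only point requiring a line of care is verifying that the change of variables is legitimate and that it transports the $L^1$-integrability hypothesis correctly, i.e. that $f \in L^1_{\MT{},\Sigma}(\R_{>0})$ translates precisely to $g \in L^1(\R)$ with the weight $e^{-cu}$ absorbed. This is immediate from the definition of $L^1_{\MT{},\Sigma}(\R_{>0})$ given in Section~\ref{PRELIM_M}. Since the paper already cites \cite{BJ:MT} for this result, the cleanest write-up simply records the reduction to the Fourier Riemann–Lebesgue lemma and refers to a standard reference for the latter.
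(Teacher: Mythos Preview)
Your argument is correct: the substitution $x=e^{-u}$ converts $\MT{f}(c+it)$ into the Fourier transform of $g(u)=f(e^{-u})e^{-cu}\in L^1(\R)$, and the classical Riemann--Lebesgue lemma finishes the job. There is nothing to compare against, however, since the paper does not supply a proof of this proposition at all; it simply cites it as Theorem~2 of \cite{BJ:MT} and uses it as a black box.
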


The second result describes the behavior of the Mellin transform with respect to the differential operator that appears in Definition \ref{MDT_def}. Typically the conditions on the underlying function are stronger and one can use integration by parts, but for us it is crucial that the conditions are minimal and that we have the stated equivalence.

\begin{prop}[Cor. 7, \cite{BJ:MT}] \label{MT_DIFF}
    Suppose that $f\in L_{\MT{},\Sigma}^{1}(\R_{>0})$, let $r\in\N$ and consider the differential operator $(Dg)(x)=-xg'(x)$. Then the following are equivalent:
    \begin{itemize}
        \item[i)] there exists $F\in C^{r-1}(\R_{>0})$, with $F^{(r-1)}\in AC_{\text{loc}}(\R_{>0})$ and $D^r F\in L_{\MT{},\Sigma}^{1}(\R_{>0})$, such that $f=F$ a.e. on $\R_{>0}$,
        \item[ii)] there exists $g\in L_{\MT{},\Sigma}^{1}(\R_{>0})$ such that $\MT{g}(s) = s^r \MT{f}(s)$ for all $s\in\Sigma+i\R$.
    \end{itemize}
    In that case, $g=D^r F$ a.e. on $\R_{>0}$.
\end{prop}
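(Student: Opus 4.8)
The plan is to convert the analytic relation $D^rF=g$ into the algebraic relation $\mathcal{M}g(s)=s^r\mathcal{M}f(s)$ by exhibiting an explicit right inverse of $D=-x\frac{d}{dx}$ that is a \emph{Mellin convolution}, and then apply the convolution theorem and the injectivity of the Mellin transform recorded above. For $h\in L^1_{\mathcal{M},\{c\}}(\R_{>0})$ with $c>0$ I would set $(Jh)(x)=\int_x^\infty h(t)\,t^{-1}\,dt$; the substitution $u=x/t$ gives $Jh=\mathbbm{1}_{[0,1]}\ast_{\mathcal{M}}h$. Since $\mathbbm{1}_{[0,1]}\in L^1_{\mathcal{M},\{c\}}(\R_{>0})$ for every $c>0$ with $\mathcal{M}\mathbbm{1}_{[0,1]}(s)=1/s$ on $\text{Re}(s)>0$, the Mellin convolution theorem then yields $Jh\in L^1_{\mathcal{M},\{c\}}(\R_{>0})$ and $\mathcal{M}(Jh)(s)=s^{-1}\mathcal{M}h(s)$; moreover $(Jh)'=-h/x$ a.e., so $D(Jh)=h$ a.e. (The competing primitive $x\mapsto-\int_0^x h(t)\,t^{-1}\,dt$ is convolution against $\mathbbm{1}_{[1,\infty)}\notin L^1_{\mathcal{M},\{c\}}$ for $c>0$, which would break this; this is the only place $\Sigma\subset\R_{>0}$ is used.) Iterating over all $c\in\Sigma$, $J^r$ maps $L^1_{\mathcal{M},\Sigma}(\R_{>0})$ into itself, $\mathcal{M}(J^rh)(s)=s^{-r}\mathcal{M}h(s)$ on $\Sigma+i\R$, and a routine induction shows $J^rh\in C^{r-1}(\R_{>0})$ with $(J^rh)^{(r-1)}\in AC_{\text{loc}}(\R_{>0})$ and $D^r(J^rh)=h$ a.e.

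Granting this, ii) $\Rightarrow$ i) is immediate: put $F:=J^rg$; then $F$ has the regularity in i), $D^rF=g\in L^1_{\mathcal{M},\Sigma}(\R_{>0})$, and $\mathcal{M}F(s)=s^{-r}\mathcal{M}g(s)=\mathcal{M}f(s)$ on $\Sigma+i\R$, so $F=f$ a.e.\ by injectivity of $\mathcal{M}$ on $L^1_{\mathcal{M},\{c\}}(\R_{>0})$. For i) $\Rightarrow$ ii), set $g:=D^rF\in L^1_{\mathcal{M},\Sigma}(\R_{>0})$; then $D^r(F-J^rg)=0$ a.e., and a short induction starting from ``$Dh=0$ a.e.\ with $h\in AC_{\text{loc}}$ forces $h$ constant'' shows the kernel of $D^r$ on this class is spanned by $1,\log x,\dots,(\log x)^{r-1}$. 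Hence $F-J^rg=p(\log x)$ for such a polynomial $p$; but $F=f$ a.e.\ and $J^rg$ both lie in $L^1_{\mathcal{M},\{c\}}(\R_{>0})$, whereas $\int_0^\infty|p(\log x)|x^{c-1}\,dx=\int_{\R}|p(t)|e^{ct}\,dt=\infty$ unless $p\equiv0$. So $F=J^rg$ a.e., giving $\mathcal{M}f(s)=\mathcal{M}(J^rg)(s)=s^{-r}\mathcal{M}g(s)$, i.e.\ $\mathcal{M}g(s)=s^r\mathcal{M}f(s)$; and in both directions $g=D^rF$ a.e.

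The genuinely new point — and the main obstacle — is spotting that the right inverse of $D$ to use is Mellin convolution with $\mathbbm{1}_{[0,1]}$, since this is exactly what trades the differential structure for the algebraic factor $s^{-r}$; after that the argument is bookkeeping with tools already in the excerpt. The steps requiring some care are the local regularity of $J^rh$, the fact that each intermediate convolution stays in $L^1_{\mathcal{M},\Sigma}(\R_{>0})$ (so $J$ may be applied $r$ times and $s^{-r}$ is finite on the relevant strip, using $0\notin\Sigma$), and the computation of $\ker D^r$ needed to rule out a spurious $\log$-polynomial.
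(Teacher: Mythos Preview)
The paper does not prove this statement itself --- it is quoted as \cite[Cor.~7]{BJ:MT}. It does, however, give a full self-contained proof of the Laplace analogue (Proposition~\ref{LT_DIFF}), so that is the natural point of comparison.

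Your argument is correct and, for the Mellin setting, more streamlined than the paper's Laplace proof. The essential difference is that you identify a single convolution kernel $\mathbbm{1}_{[0,1]}$ (with $\MT{\mathbbm{1}_{[0,1]}}(s)=1/s$) as an explicit right inverse of $D$, and use it uniformly in both directions: ii)$\Rightarrow$i) is then just $F:=J^rg$, while i)$\Rightarrow$ii) reduces to computing $\ker D^r=\spn{(\log x)^k : 0\le k<r}$ and observing that no nonzero element of this kernel lies in $L^1_{\MT{},\{c\}}(\R_{>0})$. The paper's route for the Laplace analogue is structurally different: for i)$\Rightarrow$ii) it avoids the intermediate derivatives entirely by computing $\LT{(\Delta^rF)}$ in two ways, and for ii)$\Rightarrow$i) it first forms the shifted combinations $g_c$ with $\LT{g_c}(s)=(s-c)^r\LT{f}(s)$ and then convolves with $\exp_c\cdot\mathbbm{1}_{\R_{>0}}$ (transform $1/(s-c)$), never invoking an explicit description of $\ker(d/dx)^r$. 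Your approach buys brevity and a clean algebraic picture; the paper's approach buys a proof that never has to analyse the kernel of the differential operator. One small point worth stating explicitly in your write-up is why $\int_x^\infty |h(t)|\,t^{-1}\,dt<\infty$ for \emph{every} $x>0$ (not merely a.e.): this follows from $t^{-1}\le x^{-c}t^{c-1}$ on $[x,\infty)$ for any $c\in\Sigma\subset\R_{>0}$, and it is exactly what lets you iterate $J$ pointwise and read off the $C^{r-1}$ regularity of $J^rg$ rather than only an a.e.\ identity.
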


The last result handles uniqueness of solutions of first order difference equations that arise as Mellin transforms of positive functions. It essentially follows from the generalization of the Bohr-Mollerup theorem in \cite{MZ:B-M} (see also \cite[Thm. 3.1]{W:B-M} for another formulation with slightly stronger conditions).

\begin{prop} \label{MT_DE_uni}
    Suppose that $g:\R_{>s_0}\to\R_{>0}$ satisfies $\lim_{n\to\infty} g(n+1)/g(n)=1$. Then, up to a scalar multiplication, the difference equation $F(s+1)=g(s)F(s)$ for $s>s_0$ has at most one solution of the form $\MT{f}$ with $f\in L_{\MT{},\R_{>s_0}}^{1}(\R_{>0})$ and $f>0$ a.e. on $\R_{>0}$.
\end{prop}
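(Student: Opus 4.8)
The crux is to extract from the hypotheses the hidden convexity that makes a Bohr--Mollerup type uniqueness argument applicable. The first thing I would record is an elementary but essential fact: if $f\in L^{1}_{\MT{},\R_{>s_0}}(\R_{>0})$ and $f>0$ a.e., then $s\mapsto\MT{f}(s)$ is strictly positive and \emph{logarithmically convex} on the open interval $\R_{>s_0}$. Indeed, for $s=\lambda s_1+(1-\lambda)s_2$ with $\lambda\in(0,1)$ one writes $f(x)x^{s-1}=\bigl(f(x)x^{s_1-1}\bigr)^{\lambda}\bigl(f(x)x^{s_2-1}\bigr)^{1-\lambda}$ and applies Hölder's inequality with exponents $1/\lambda$ and $1/(1-\lambda)$ to obtain
\[
\MT{f}\bigl(\lambda s_1+(1-\lambda)s_2\bigr)\le \MT{f}(s_1)^{\lambda}\,\MT{f}(s_2)^{1-\lambda},
\]
while positivity is immediate from $f>0$ a.e.\ and $x^{s-1}>0$. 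Hence $\phi:=\log\MT{f}$ is a finite convex function on $\R_{>s_0}$, in particular continuous there with nondecreasing one-sided derivatives. (For $g(s)=s$ this is exactly the classical observation that $\Gamma=\MT{\mathcal{G}^0}$ is log-convex.)

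Next I would take two solutions $F_1=\MT{f_1}$ and $F_2=\MT{f_2}$ as in the statement and set $\psi:=\log F_1-\log F_2$. From $F_i(s+1)=g(s)F_i(s)$ the function $\psi$ is $1$-periodic on $\R_{>s_0}$, and by the previous paragraph $\log F_1,\log F_2$ are convex. This is precisely the situation covered by the generalized Bohr--Mollerup theorem of \cite{MZ:B-M} (see also \cite{W:B-M}): under the asymptotic condition $\lim_{n\to\infty}g(n+1)/g(n)=1$ a logarithmically convex positive solution of $F(s+1)=g(s)F(s)$ on $\R_{>s_0}$ is unique up to a positive multiplicative constant, so $F_1=cF_2$ for some $c>0$. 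If one prefers a self-contained argument, it runs as follows: for each $s\in\R_{>s_0}$ with $\{s\}\ne 0$ and each large integer $m$, the three-slopes inequality for the convex functions $\log F_i$ at the points $m,\,m+\{s\},\,m+1$, combined with $\log F_i(k+1)-\log F_i(k)=\log g(k)$ for integers $k>s_0$, gives
\[
\bigl|\psi(m+\{s\})-\psi(m)\bigr|\le \{s\}\,\log\!\bigl(g(m)/g(m-1)\bigr);
\]
since $\psi(s)=\psi(m+\{s\})$ and $\psi(m)$ is independent of the integer $m$ (both by $1$-periodicity), letting $m\to\infty$ forces $\psi$ to be constant, i.e.\ $F_1=cF_2$. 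Finally, by injectivity of the Mellin transform on $L^{1}_{\MT{},\{c\}}(\R_{>0})$ for any $c$ in the interval, $F_1=cF_2$ yields $f_1=cf_2$ a.e., which proves the stated uniqueness.

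The step I expect to need the most care is matching the hypotheses to the precise Bohr--Mollerup statement being invoked: $g$ is assumed only to be positive and to satisfy the \emph{integer} asymptotic ratio condition, with \emph{no} monotonicity or convexity of $g$ itself, so the uniqueness cannot rely on any regularity of $g$ and must rest entirely on the log-convexity of the candidate solutions $\MT{f_i}$ established in the first step. A secondary, purely bookkeeping point is that $s_0$ may be large, so one works throughout with integers $m>s_0$ and uses $1$-periodicity to reduce everything to the unit interval $(s_0,s_0+1]$; this is routine and does not affect the argument.
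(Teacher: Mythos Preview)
Your proof is correct and follows essentially the same route as the paper: log-convexity of $\MT{f}$ via H\"older's inequality, then uniqueness of log-convex solutions to $F(s+1)=g(s)F(s)$ via the generalized Bohr--Mollerup theorem of \cite{MZ:B-M}. The only cosmetic difference is that the paper reduces general $s_0$ to $s_0=0$ by the explicit substitution $f_0(x)=x^{s_0}f(x)$ before invoking \cite{MZ:B-M}, whereas you work directly with large integers $m>s_0$; your self-contained three-slopes bound should carry an absolute value on $\log\bigl(g(m)/g(m-1)\bigr)$, but this is immaterial since that quantity tends to zero.
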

\begin{proof}
    We will first show the result in the special case that $s_0=0$. In fact, we will show something stronger, namely that, up to a scalar multiplication, there is at most one strictly positive log-convex solution. The fact that Mellin transform $\MT{f}$ of a function $f\in L_{\MT{},\R_{>0}}^{1}(\R_{>0})$ with $f>0$ a.e. is log-convex follows in a straightforward way from Hölder's inequality. 
    \begin{comment}
    $$\begin{aligned}
        \ln\MT{f}(\lambda s_1+(1-\lambda)s_2) &= \ln \int_0^\infty f(x) x^{\lambda s_1+(1-\lambda)s_2-1} dx \\
        &= \ln \int_0^\infty (f(x) x^{s_1-1})^{\lambda} (f(x) x^{s_2-1})^{1-\lambda} dx \\
        &\leq \ln[ (\int_0^\infty f(x) x^{s_1-1} dx)^{\lambda} (\int_0^\infty f(x) x^{s_2-1} dx)^{1-\lambda} ] \\
        &= \lambda \ln\MT{f}(s_1) + (1-\lambda) \ln\MT{f}(s_2).
    \end{aligned}$$
    \end{comment}
    Note that by taking logarithms, any strictly positive log-convex solution of $F(s+1)=g(s)F(s)$ for $s>0$, corresponds to a real-valued convex solution of $(\Delta F)(s) = \ln g(s) $ for $s>0$ (here $\Delta$ denotes the forward difference operator $(\Delta F)(s)=F(s+1)-F(s)$). However, as $\lim_{n\to\infty} (\Delta \ln g)(n) = 0 $, the latter are uniquely determined by $\ln g$ up to an additive constant due to \cite[Thm. 1.4]{MZ:B-M}.
   
    In order to extend to general $s_0\geq 0$, we note that any solution of $F(s+1)=g(s)F(s)$ for $s>s_0$ of the form $\MT{f}$ gives rise to a solution of $F(s+1)=g(s)F(s)$ for $s>0$ of the form $\MT{f_0}$ through the operator $f_0(x) = x^{s_0} f(x)$. Clearly still $f_0>0$ a.e. and since $\MT{f_0}(s) = \MT{f}(s+s_0)$ for $s\in\R_{>0}$, we also have $f_0\in L_{\MT{},\R_{>0}}^{1}(\R_{>0})$.
\end{proof}

\subsection{Laplace transform} \label{PRELIM_A}

The Laplace transform can be seen as the additive analogue of the Mellin transform. The (bilateral) Laplace transform of a function $f:\R\to\C$ is given by 
    $$(\mathcal{L} f)(s) = \int_{-\infty}^\infty f(x) e^{-sx} dx, $$
whenever this integral exists. For convenience, we will work with functions in the space
$$ L_{\LT{},\Sigma}^{1}(\R) = \{ f:\R\to\C \mid \int_{-\infty}^{\infty} |f(x)e^{-sx}| dx < \infty,\ s\in\Sigma  \}, $$
where $\Sigma\subset\R_{>0}$ is an interval (or, if explicitly mentioned, a singleton). Note that instead of considering the interval $\Sigma$, we could also have considered the strip $\Sigma+i\R$ in the right half of the complex plane. It is straightforward to see that, given $f:\R\to\C$, by defining $\tilde{f}: \R_{>0}\to\C:x\mapsto f(-\ln x)$, we have $f\in L_{\LT{},\Sigma}^{1}(\R)$ if and only if $\tilde{f}\in L_{\MT{},\Sigma}^{1}(\R)$. In that case, $\LT{f}=\MT{\tilde{f}}$ on $\Sigma$. As a consequence, many results for the Mellin transform can be rephrased in terms of the Laplace transform.
\medbreak

The following elementary examples will play a role in the characterization of sets of additive derivative type.

\begin{ex}[Eq. 5.9.1, \cite{DLMF}] \label{ADT_GAMMA}
Let $a>-1$. The gamma density $ \mathcal{G}^a(x) = x^a e^{-x}$ on $\R_{>0}$ has Laplace transform
    $$\LT{\mathcal{G}^a}(s) = \frac{\Gamma(a+1)}{(s+1)^{a+1}},\quad \text{Re}(s)>0.$$
\end{ex}

\begin{ex} [Table 1.14.1, \cite{DLMF}] \label{ADT_GAUSSIAN}
The Gaussian density $\mathcal{G}(x) = e^{-x^2}$ on $\R$ has Laplace transform
    $$\LT{\mathcal{G}}(s) = \sqrt{\pi} e^{s^2/4},\quad \text{Re}(s)>0.$$
\end{ex}

In what follows, we will state some well-known properties of the Laplace transform. These properties essentially carry over from the corresponding results for the Mellin transform. For example, the Laplace transform of a function $f\in L_{\LT{},\Sigma}^{1}(\R)$ is also analytic on the associated strip $\Sigma+i\R$. The convolution law and inversion theorem take the following form. Whenever $f,g\in L_{\LT{},\{c\}}^{1}(\R_{>0})$, there exists a Laplace convolution
    $$(f\ast_{\mathcal{L}} g)(x) = \int_{0}^\infty f(t) g(x-t) dt,\quad \text{a.e. } x\in\R,$$
with the property that
    $$ (\LT{[f\ast_{\mathcal{L}} g]})(s) = (\LT{f})(s) \cdot (\LT{g})(s),\quad s\in c+i\R. $$
Whenever $f\in L_{\LT{},\{c\}}^{1}(\R_{>0})$ and $\LT{f}\in L^1(c+i\R)$, there is an inverse transform
    $$ f(x) = \int_{c-i\infty}^{c+i\infty} \LT{f}(s) e^{sx} \frac{ds}{2\pi i},\quad \text{a.e. } x\in\R.  $$
In particular, this implies that the Laplace transform is injective on $L_{\LT{},\{c\}}^{1}(\R)$ in an a.e.-sense.
\medbreak

In the setting of the Laplace (or rather Fourier) transform, the Riemann-Lebesgue lemma is also standard. Later, we will use this result to establish whether certain functions can appear as a Laplace transform.

\begin{prop} \label{LT_RL}
    Suppose that $f\in L_{\LT{},\Sigma}^{1}(\R)$. Then, for every $s\in\Sigma$, we have
        $$ \lim_{|t|\to\infty} \LT{f}(s+it) = 0. $$
\end{prop}

Less standard is the following analogue of Proposition \ref{MT_DIFF}. Since, to our knowledge such a result for the Laplace transform (or Fourier transform) with minimal conditions on the underlying functions hasn't appeared before in the literature, we provide an explicit self-contained proof (based on some ideas that appear in \cite[\S 8]{BJ:MT}). Alternatively, one could use make use of the corresponding result for the Mellin transform.

\begin{prop} \label{LT_DIFF}
    Suppose that $f\in L_{\LT{},\Sigma}^{1}(\R)$ and let $r\in\N$. Then the following are equivalent:
    \begin{itemize}
        \item[$i)$] there exists $F\in C^{r-1}(\R)$, with $F^{(r-1)}\in AC_{\text{loc}}(\R)$ and $F^{(r)}\in L_{\LT{},\Sigma}^{1}(\R)$, such that $f=F$ a.e. on $\R$,
        \item[$ii)$] there exists $g\in L_{\LT{},\Sigma}^{1}(\R)$ such that $\LT{g}(s) = s^r \LT{f}(s)$ for all $s\in\Sigma+i\R$.
    \end{itemize}
    In that case, $g=F^{(r)}$ a.e. on $\R$.
\end{prop}
\begin{prop} \label{LT_DIFF}
    Suppose that $f\in L_{\LT{},\Sigma}^{1}(\R)$ and let $r\in\N$. Then the following are equivalent:
    \begin{itemize}
        \item[$i)$] there exists $F\in C^{r-1}(\R)$, with $F^{(r-1)}\in AC_{\text{loc}}(\R)$ and $F^{(r)}\in L_{\LT{},\Sigma}^{1}(\R)$, such that $f=F$ a.e. on $\R$,
        \item[$ii)$] there exists $g\in L_{\LT{},\Sigma}^{1}(\R)$ such that $\LT{g}(s) = s^r \LT{f}(s)$ for all $s\in\Sigma+i\R$.
    \end{itemize}
    In that case, $g=F^{(r)}$ a.e. on $\R$.
\end{prop}
\begin{proof}
    We will first prove the implication $i)\im ii)$. We will show that $\LT{F^{(r)}}(s)=s^r\LT{f}(s)$ for all $s\in\Sigma$. The challenge is to circumvent the fact that the Laplace transform of the intermediate derivatives doesn't need to exist, otherwise we could use an argument based on integration by parts. We will compute the Laplace transform of
        $$ F_0(x) = \int_0^1 \dots \int_0^1 F^{(r)}(x+x_1+\dots+x_r) dx_r\dots dx_1,\quad x\in\R,  $$
    in two ways. The function $F_0:\R\to\C$ is well-defined because $F\in C^{r-1}(\R)$ and $F^{(r-1)}\in AC_{\text{loc}}(\R)$, hence $F^{(j)}\in L^1_{\text{loc}}(\R)$ for all $j\in\{0,\dots,r\}$. In fact, in that case, we can use the identity $(\Delta h)(x) = \int_0^1 h'(x+t) dt $ to show that $F_0 = \Delta^r F$ on $\R$ and thus (see \cite[Eq. 3.9.4]{DLMF})
        $$ F_0(x) = \sum_{k=0}^{r} \binom{r}{k} (-1)^{r-k} F(x+k),\quad x\in\R. $$
    Its Laplace transform is then given by
        $$ \LT{F_0}(s) = \sum_{k=0}^{r} \binom{r}{k} (-1)^{r-k} e^{sk} \LT{F}(s) = (e^s-1)^r \LT{F}(s),\quad s\in\Sigma. $$
    On the other hand, since $F^{(r)}\in L_{\LT{},\Sigma}^{1}(\R)$, we can also compute the Laplace transform of $F_0$ directly by applying Fubini's theorem and performing a change of variables
        $$ \LT{F_0}(s) = \int_0^1 \dots \int_0^1  e^{s(x_1+\dots+x_r)} \LT{F^{(r)}}(s) dx_r\dots dx_1 =  \frac{(e^s-1)^r}{s^r} \LT{F^{(r)}}(s),$$
    for $s\in\Sigma$. Combining both results, we obtain the desired identity.
    \medbreak
    
    We will now prove the other implication $ii)\im i)$. Since $g,1_{\R_{>0}}\in L_{\LT{},\Sigma}^{1}(\R)$, we can consider the Laplace convolutions $g_j = g \ast_{\LT{}} (\ast_{\LT{}} 1_{\R_{>0}})^{r-j}$ for $j\in\{0,\dots,r\}$, which are a priori defined a.e. on $\R$. Moreover, we have $g_j\in L_{\LT{},\Sigma}^{1}(\R)$ and by the assumption
        $$\LT{g_j}(s) = \frac{\LT{g}(s)}{s^{r-j}} = s^{j} \LT{f}(s),\quad s\in\Sigma. $$
    As a consequence, by taking appropriate linear combinations, for every $c\in\R$, there exists a function $g_c\in L_{\LT{},\Sigma}^{1}(\R)$ such that
        $$\LT{g_c}(s) = (s-c)^r \LT{f}(s),\quad s\in\Sigma.$$
    Now we fix $c\in\Sigma$ and consider
        $$ F_c(x) = e^{cx} \int_{-\infty}^{x} \int_{-\infty}^{x_1} \dots \int_{-\infty}^{x_{r-1}} g_c(x_r) e^{-cx_r} dx_r\dots dx_1,\quad x\in\R. $$
    Observe that $F_c = g_c \ast_{\LT{}} (\ast_{\LT{}} \exp_c )^r$, with $\exp_c(x)=\exp(cx) 1_{\R_{>0}}(x)$ on $\R$, and that $g_c,\exp_c\in L_{\LT{},\Sigma_c}^{1}(\R)$, with $\Sigma_c = \{s\in\Sigma \mid s>c\}$. The latter is non-empty, because $\Sigma$ is an open interval and $c\in\Sigma$. As a consequence, $F_c$ is well-defined a.e. on $\R$. Moreover, we have $F_c\in L_{\LT{},\Sigma_c}^{1}(\R)$ and 
        $$ \LT{F_c}(s) = \frac{\LT{g}_c(s)}{(s-c)^r} = \LT{f}(s),\quad s\in\Sigma_c. $$     
    Injectivity of the Laplace transform then implies that $F_c=f$ a.e. on $\R$. We will show that $F_c$ satisfies the other conditions in $i)$ as well. Denote, for every $j\in\{0,\dots,r-1\}$,
        $$ F_{j,c}(x) = e^{cx} \int_{-\infty}^{x} \int_{-\infty}^{x_{j+1}} \dots \int_{-\infty}^{x_{r-1}} g_c(x_r) e^{-cx_r} dx_r\dots dx_{j+1},\quad x\in\R. $$
    Then we can show by induction that, for every $n\in\{0,\dots,r-1\}$,
        $$ F_{c}^{(n)}(x) = \sum_{k=0}^{n} \binom{n}{k} c^{n-k} F_{k,c}(x),\quad x\in\R, $$
    and that $F_{c}^{(n)}(x)\in C^1(\R)$, for $n\in\{0,\dots,r-2\}$, and $F_{c}^{(r-1)}\in AC_{\text{loc}}(\R)$. To this end, we can use the fact that functions of the form $x\mapsto \int_{-\infty}^x h(t) dt$ are in $AC_{\text{loc}}(\R)$ for $h\in L^{1}(\R)$ and are in $C^{1}(\R)$ for $h\in C^{0}(\R)$. Since $F_{c}^{(r-1)}\in AC_{\text{loc}}(\R)$, its derivative exists a.e. and is given by
        $$ F_{c}^{(r)}(x) = \sum_{k=0}^{r} \binom{r}{k} c^{r-k} F_{k,c}(x),\quad \text{a.e. } x\in\R. $$
    It then remains to show that $F_c^{(r)}\in L_{\LT{},\Sigma}^{1}(\R)$. Observe that
        $$\LT{F_{k,c}}(s) = \frac{\LT{g_c}(s)}{(s-c)^{r-k}} = (s-c)^k \LT{f}(s),\quad s\in\Sigma_c,$$
    and therefore
        $$ \LT{F_c^{(r)}}(s) =  \sum_{k=0}^{r} \binom{r}{k} c^{r-k} (s-c)^k \LT{F_c}(s) = s^{r} \LT{f}(s),\quad s\in\Sigma_c. $$
    Therefore, by the assumption, $\LT{F_c^{(r)}}(s) = \LT{g}(s)$ for all $s\in\Sigma_c$. Since the Laplace transform is injective, we can then conclude that $F_c^{(r)} = g$ a.e. and hence $F_c^{(r)}\in L_{\LT{},\Sigma}^{1}(\R)$.    
\end{proof}

Note that in several steps of the above proof, we used implicitly that $0\not\in\Sigma$, which is why we restricted to intervals $\Sigma\subset\R_{>0}$ at the start of this section.

\section{Examples} \label{EX}

In this section, we will provide many examples of the ensembles described in Theorem~\ref{MOPE_MDT}~\&~\ref{MOPE_ADT}. In the multiplicative setting, we will show that the eigenvalue densities of products of JUE and LUE matrices fit into this framework. In fact, they turn out to be the generic examples of ensembles covered by Theorem~\ref{MOPE_MDT}. In the additive setting, we will show that the eigenvalue densities of sums of (dilated) LUE and GUE matrices fit into the framework of Theorem~\ref{MOPE_ADT}. However, here it turns out that we can give many other examples as well. In both settings, we will analyze the values of the parameters under which the functions in \eqref{MOPE_MDT_om} and \eqref{MOPE_ADT_om} give rise to a \textit{positive} ensemble of derivative type, in which case Theorem~\ref{MOPE_MDT}~\&~\ref{MOPE_ADT} is applicable. This will allow us to show  Proposition~\ref{MDT_OPE}~\&~\ref{ADT_OPE}.

\subsection{Multiplicative setting} \label{EX_M}

The eigenvalue densities of JUE and LUE matrices are elementary examples of the (multiple) orthogonal polynomial ensembles covered by Theorem \ref{MOPE_MDT}. This follows immediately from Example \ref{MDT_BETA} \& \ref{MDT_GAMMA}. The fact that they are of multiplicative derivative type was first observed in \cite[Ex. 2.4]{FKK:PolyaE}.

\begin{cor} \label{OPE_MDT_EX}
Suppose that $b>a>-1$.
\begin{itemize}
    \item[$i)$] If $X\sim\text{GUE}[n;a]$, i.e. $\text{EV}(X)\sim\text{OPE}[n;\mathcal{G}^a]$, then $\text{EV}(X)\sim\text{PE}_{\text{MDT}}[n;\mathcal{G}^a]$.
    \item[$ii)$] If $X\sim\text{JUE}[n;a,b]$, i.e. $\text{EV}(X)\sim\text{OPE}[n;\mathcal{B}^{a,b}]$, then $\text{EV}(X)\sim\text{PE}_{\text{MDT}}[n;\mathcal{B}^{a,b+n-1}]$.
\end{itemize}
\end{cor}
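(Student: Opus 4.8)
The plan is to establish both items by exhibiting, in each case, an explicit function $\om$ for which the span of $v_1,\dots,v_n$ equals the span of the iterated derivatives $(xd/dx)^{j-1}[\om]$, and then invoke the Mellin-transform criterion of Proposition~\ref{MT_DIFF} to make this rigorous with only the minimal regularity demanded by Definition~\ref{MDT_def}. The guiding principle is the one emphasised in the introduction: the operator $x\tfrac{d}{dx}$ corresponds, under the Mellin transform, to multiplication by $s$, so I would translate the whole span condition into a statement about spans of functions of the form $s^{j-1}\MT{\om}(s)$ versus $\MT{v_j}(s)$.

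\textbf{Item $ii)$ (JUE).} Here $\text{EV}(X)\sim\text{OPE}[n;\mathcal B^{a,b}]$, so by Definition~\ref{def_MOPE_} the underlying functions span $\mathcal W(n;\mathcal B^{a,b}) = \spn{x^{k-1}\mathcal B^{a,b}(x)\mid k=1,\dots,n}$. By Example~\ref{MDT_BETA}, the Mellin transform of $x^{k-1}\mathcal B^{a,b}(x)$ is $\Gamma(b-a)\,\Gamma(s+a+k-1)/\Gamma(s+b+k-1)$, up to the shift $s\mapsto s+k-1$. I would check that the span of these $n$ functions (over $k=1,\dots,n$) coincides with the span of $s^{j-1}\MT{\mathcal B^{a,b+n-1}}(s) = s^{j-1}\Gamma(b+n-1-a)\Gamma(s+a)/\Gamma(s+b+n-1)$ for $j=1,\dots,n$. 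Both families, after clearing the common factor $\Gamma(s+a)/\Gamma(s+b+n-1)$, reduce to spans of degree-$\le n-1$ polynomials in $s$: on the derivative side one gets $\{1,s,\dots,s^{n-1}\}$ directly, while on the weight side one gets $\{(s+b+n-1)(s+b+n-2)\cdots(s+b+k) : k=1,\dots,n\}$ after writing $\Gamma(s+a+k-1)/\Gamma(s+b+k-1) = \big[\Gamma(s+a)/\Gamma(s+b+n-1)\big]\cdot(s+a)(s+a+1)\cdots(s+a+k-2)\cdot(s+b+n-1)\cdots(s+b+k)$ — wait, this needs care with which Pochhammer factors appear; the clean way is to observe that $x^{k-1}\mathcal B^{a,b}$ itself equals $x^{k-1}\cdot x^a(1-x)^{b-a-1}$, and $\{x^{k-1}\mathcal B^{a,b}\}_{k=1}^n = \{x^{a+k-1}(1-x)^{b-a-1}\}_{k=1}^n = x^a(1-x)^{b-a-1}\cdot\spn{1,x,\dots,x^{n-1}}$, which equals $x^a(1-x)^{b-a-1}\cdot\spn{1,(1-x),\dots,(1-x)^{n-1}}$; comparing with $(x d/dx)^{j-1}$ applied to $\mathcal B^{a,b+n-1}(x)=x^a(1-x)^{b+n-2-a}$, I would verify both spans equal $x^a(1-x)^{b-a-1}\cdot\mathbb C[x]_{\le n-1}$. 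The regularity hypothesis $\om^{(n-2)}\in AC_{\mathrm{loc}}$ with $D^{n-1}\om\in L^1_{\MT{},\Sigma}$ is exactly what Proposition~\ref{MT_DIFF} certifies from the explicit gamma-quotient form of $\MT{\mathcal B^{a,b+n-1}}$, since that function is $s^{n-1}$ times something decaying like a Mellin transform (Proposition~\ref{MT_RL}).

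\textbf{Item $i)$ (LUE).} This is the case $b\to\infty$ of the above, or rather the direct statement: $\text{EV}(X)\sim\text{OPE}[n;\mathcal G^a]$ means the functions span $x^a e^{-x}\cdot\mathbb C[x]_{\le n-1}$, and I would show this equals $\spn{(xd/dx)^{j-1}[\mathcal G^a]}_{j=1}^n$. One computes $(xd/dx)[x^a e^{-x}] = (a-x)x^a e^{-x}$, and inductively $(xd/dx)^{j-1}[x^a e^{-x}] = p_{j-1}(x) x^a e^{-x}$ for a polynomial $p_{j-1}$ of exact degree $j-1$ (leading term $(-x)^{j-1}$, from repeatedly differentiating the $e^{-x}$ factor); since $\deg p_{j-1}=j-1$ these $n$ functions are linearly independent and span exactly $x^a e^{-x}\cdot\mathbb C[x]_{\le n-1}$. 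On the Mellin side this is the cleaner identity $\MT{\mathcal G^a}(s)=\Gamma(s+a)$ and $s^{j-1}\Gamma(s+a)$, $j=1,\dots,n$, spanning the same space as $\Gamma(s+a+k-1)$, $k=1,\dots,n$, which is again the polynomial span statement after dividing by $\Gamma(s+a)$. The regularity is trivial here since $\mathcal G^a$ is smooth on $\R_{>0}$ with all relevant derivatives in the Mellin $L^1$ space.

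\textbf{Main obstacle.} Both items are essentially bookkeeping once the translation to polynomial spans in $s$ (via Examples~\ref{MDT_BETA}--\ref{MDT_GAMMA} and Proposition~\ref{MT_DIFF}) is in place; the only genuinely delicate point is the shift in the second parameter from $b$ to $b+n-1$ in item $ii)$ — one must check that the \emph{same} $n$-dimensional span is obtained and that $n-1$ applications of $D=-xd/dx$ to $\mathcal B^{a,b+n-1}$ do not lower the polynomial degree (equivalently, that $\MT{\mathcal B^{a,b+n-1}}(s)$, when written as $s^{n-1}\cdot(\text{lower order})$, does not have the top coefficient vanish), which is where the strict inequality $b>a$ and the precise exponent $b+n-2-a>n-2>-1$ are used to guarantee $\mathcal B^{a,b+n-1}\in L^1_{\MT{},\Sigma}(\R_{>0})$ and that Proposition~\ref{MT_DIFF}$(i)$ holds with $F=\mathcal B^{a,b+n-1}$. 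I would carry out the degree computation for $p_{j-1}$ explicitly for small $j$ to pin down the leading coefficient and then conclude by the dimension count.
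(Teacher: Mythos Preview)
Your approach is correct and is essentially a direct verification of Definition~\ref{MDT_def} (equivalently, of the span criterion in Theorem~\ref{MDT_ALG}), carried out by hand in each of the two cases. This is more elementary than what the paper does: the paper simply observes that, by Examples~\ref{MDT_BETA} and~\ref{MDT_GAMMA}, the Mellin transforms of $\mathcal G^a$ and $\mathcal B^{a,b}$ match the template~\eqref{MOPE_MDT_w} with $r=1$, and then reads off $\om_{1,n}$ from~\eqref{MOPE_MDT_om} via Proposition~\ref{MOPE_MDT_w_om} (or Theorem~\ref{MOPE_MDT}). In the LUE case one takes $d_1=1$, $d_2=0$, $a_1=a$, giving $\MT\om_{1,n}(s)=\Gamma(s+a)=\MT{\mathcal G^a}(s)$; in the JUE case one takes $d_1=d_2=1$, $a_1=a$, $b_1=b-1$, giving $\MT\om_{1,n}(s)=\Gamma(s+a)/\Gamma(s+b+n-1)=\MT{\mathcal B^{a,b+n-1}}(s)/\Gamma(b+n-1-a)$.

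Your route has the advantage of being self-contained and of making the span identity $\spn{x^{k-1}\mathcal B^{a,b}}_{k=1}^n = \spn{D^{j-1}\mathcal B^{a,b+n-1}}_{j=1}^n$ completely explicit, at the cost of the bookkeeping you flagged in the ``main obstacle''. Note that the linear-independence step you worry about is actually free once you pass to the Mellin side: $\{s^{j-1}\MT\om(s)\}_{j=1}^n$ are trivially independent, and both families sit inside the $n$-dimensional space $x^a(1-x)^{b-a-1}\cdot\C[x]_{\le n-1}$, so a dimension count finishes it without any leading-coefficient computation. The paper's route, by contrast, buys you the result in one line but relies on the general machinery of Section~\ref{CLASS_M}, whose proof is considerably longer than your direct argument for this special case.
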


Since the multiplicative derivative type structure of the eigenvalue densities of independent random matrices is preserved under taking their products, i.e. if $X_1$ and $X_2$ are independent random matrices with $\text{EV}(X_1)\sim\text{PE}_{\text{MDT}}[n;\om_1]$ and $\text{EV}(X_2)\sim\text{PE}_{\text{MDT}}[n;\om_2]$, then $\text{EV}(X_1X_2)\sim\text{PE}_{\text{MDT}}[n;\om_1\ast_{\MT{}}\om_2]$, see \cite{FKK:PolyaE}, the above leads to many other examples of polynomial ensembles of derivative type. Theorem \ref{MOPE_MDT} then shows that they are also multiple orthogonal polynomial ensembles.

\begin{cor} \label{MOPE_MDT_EX}
     Let $n\in\N$ and take $\vec{n}\in\mathcal{S}^r$ with $\sz{n}=n$. Suppose that all $a_i>-1$ and that all $b_i+n_i-n+1>a_i$. Let $ X_i\sim\text{JUE}[n;a_i,b_i+n_i-n+1]$, for $j\in\{1,\dots,q\}$, and $ X_i\sim\text{GUE}[n;a_i]$, for $i\in\{q+1,\dots,r\}$, be independent random matrices. Then $\text{EV}(X_1\dots X_r)\sim\text{PE}_{\text{MDT}}[n;\om_{n,1}]$
    with 
        $$ \MT{\om_{1,n}}(s) =  \frac{\prod_{i=1}^p \Gamma(s+a_i)}{\prod_{i=1}^q \Gamma(s+b_i+n_i)},\quad \text{Re}(s)>0, $$
    and $\text{EV}(X_1\dots X_r)\sim\text{MOPE}[n;w_{1,1},\dots,w_{1,r}]$ with
        $$ \MT{w_{1,j}}(s) = \frac{\prod_{i=1}^p \Gamma(s+a_i)}{\prod_{i=1}^q \Gamma(s+b_i)} \frac{s^{j-1}}{\prod_{i=1}^{\max\{j,q\}} (s+b_i)},\quad \text{Re}(s)>0.$$
\end{cor}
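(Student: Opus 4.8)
The plan is to assemble Corollary~\ref{MOPE_MDT_EX} from the building blocks already in place: Corollary~\ref{OPE_MDT_EX}, the multiplicativity of multiplicative derivative type under the Mellin convolution, and Proposition~\ref{MOPE_MDT_w_om} (or equivalently Theorem~\ref{MOPE_MDT}). First I would record, using Example~\ref{MDT_BETA}, that $\mathcal{B}^{a_i,b_i+n_i-n+1}$ has Mellin transform $\Gamma(b_i+n_i-n+1-a_i)\,\Gamma(s+a_i)/\Gamma(s+b_i+n_i-n+1)$ on $\mathrm{Re}(s)>0$, and by Corollary~\ref{OPE_MDT_EX}~$ii)$ (applied with the shifted second parameter $b_i+n_i-n+1$, which is legitimate since $b_i+n_i-n+1>a_i>-1$) that $\text{EV}(X_i)\sim\text{PE}_{\text{MDT}}[n;\mathcal{B}^{a_i,b_i+n_i}]$, whose weight has Mellin transform proportional to $\Gamma(s+a_i)/\Gamma(s+b_i+n_i)$. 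For the Ginibre/gamma factors, Corollary~\ref{OPE_MDT_EX}~$i)$ gives $\text{EV}(X_i)\sim\text{PE}_{\text{MDT}}[n;\mathcal{G}^{a_i}]$ with Mellin transform $\Gamma(s+a_i)$ for $i\in\{q+1,\dots,r\}$.

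Next I would invoke the stability of the multiplicative derivative type structure under products of independent random matrices, i.e.\ if $\text{EV}(X_i)\sim\text{PE}_{\text{MDT}}[n;\om_i]$ then $\text{EV}(X_1\cdots X_r)\sim\text{PE}_{\text{MDT}}[n;\om_1\ast_{\MT{}}\cdots\ast_{\MT{}}\om_r]$, as recalled from~\cite{FKK:PolyaE} just before the statement. Taking Mellin transforms turns the convolution into a product, so $\MT{\om_{1,n}}(s)$ is (up to an irrelevant constant absorbed into the normalization) the product of the factors above, namely $\prod_{i=1}^p\Gamma(s+a_i)/\prod_{i=1}^q\Gamma(s+b_i+n_i)$ on $\mathrm{Re}(s)>0$, where $p=r$ in the case all $d_1(i)=1$; more precisely, with the identifications $d_1\equiv 1$, $d_2(i)=1$ for $i\le q$ and $d_2(i)=0$ for $i>q$ as in the paragraph following Remark~\ref{MDT_MR_ALT}. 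I should check that this product still lies in $L^1_{\MT{},\Sigma}(\R_{>0})$ for a suitable interval $\Sigma\subset\R_{>0}$ — this is immediate since each individual weight is in that space and the Mellin convolution of functions in $L^1_{\MT{},\{c\}}$ stays there with the transforms multiplying, and one can choose $\Sigma=(0,\infty)$ or any subinterval on which all factors are finite.

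Finally, to obtain the $\text{MOPE}$ description I would apply Proposition~\ref{MOPE_MDT_w_om} in the reverse direction: with $c=1$, $s_0$ irrelevant, $d_1\equiv 1$ and $d_2$ as above, the $\om_{1,n}$ in~\eqref{MOPE_MDT_om} is exactly the one just computed, and the corresponding $w_{1,j}$ from~\eqref{MOPE_MDT_w} are precisely $\MT{w_{1,j}}(s)=\prod_{i=1}^p\Gamma(s+a_i)/\prod_{i=1}^q\Gamma(s+b_i)\cdot s^{j-1}/\prod_{i=1}^{\min\{j,?\}}(s+b_i)^{d_2(i)}$; because $d_2(i)=1$ exactly for $i\le q$, the product $\prod_{i=1}^j(s+b_i)^{d_2(i)}$ equals $\prod_{i=1}^{\max\{j,q\}}(s+b_i)$ only after accounting for the fact that the $\Gamma(s+b_i+n_i)$ in the numerator-denominator bookkeeping shifts indices — here I would simply match the exponents carefully and cite Remark~\ref{MDT_MR_ALT} for the equivalent rewriting with the surjection $\sigma$, noting $\sigma(j)=\max\{j,q\}$ is forced when $d_2\equiv 1$ on $\{1,\dots,q\}$. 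Proposition~\ref{MOPE_MDT_w_om} then yields $\text{MOPE}[n;w_{1,1},\dots,w_{1,r}]=\text{PE}_{\text{MDT}}[n;\om_{1,n}]$, and combined with the derivative-type identification of $\text{EV}(X_1\cdots X_r)$ we are done.

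The main obstacle I anticipate is purely bookkeeping: reconciling the two parametrizations (the $d_1,d_2$ form in~\eqref{MOPE_MDT_w} versus the $\vec a\in\C^p$, $\vec b\in\R^q$, $\sigma$ form of Remark~\ref{MDT_MR_ALT} and of the displayed formula in the paragraph after it), and in particular verifying that the shift $b_i\mapsto b_i+n_i-n+1$ in the hypothesis on the $X_i$ together with the shift $b_i\mapsto b_i+n_i$ produced by Corollary~\ref{OPE_MDT_EX}~$ii)$ combine to give exactly the $\Gamma(s+b_i+n_i)$ appearing in~\eqref{MOPE_MDT_om} — i.e.\ keeping track of the ``$-n+1$'' and where the $n$-dependence must and must not appear. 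There is genuinely no hard analysis here; everything delicate was already done in Proposition~\ref{MOPE_MDT_w_om} and in the cited stability result from~\cite{FKK:PolyaE}.
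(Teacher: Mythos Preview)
Your plan is correct and matches the paper's approach exactly: the paper derives the corollary directly from Corollary~\ref{OPE_MDT_EX}, the stability of $\text{PE}_{\text{MDT}}$ under products via Mellin convolution (cited from~\cite{FKK:PolyaE} in the paragraph preceding the statement), and then Theorem~\ref{MOPE_MDT}/Proposition~\ref{MOPE_MDT_w_om} to read off the MOPE weights. Your only anticipated obstacle---the index bookkeeping between the $d_1,d_2$ parametrization of~\eqref{MOPE_MDT_w} and the $\sigma$/$\max\{j,q\}$ form---is indeed just notational reconciliation (note that with $d_2(i)=1$ iff $i\le q$ one has $\prod_{i=1}^j (s+b_i)^{d_2(i)}=\prod_{i=1}^{\min\{j,q\}}(s+b_i)$, so the $\max$ in the displayed formula is a typo for $\min$, consistent with the paragraph after Remark~\ref{MDT_MR_ALT}).
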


It turns out that we can slightly weaken the conditions on the parameters and still have that $\om_{1,n}$ gives rise to a \textit{positive} ensemble of derivative type, but then we lose the connection to JUE and LUE matrices. The case where the parameters are integers is essentially covered by the techniques from \cite[Cor. 2.4]{KKS:trunc_prod} and \cite[Thm. 2.1]{KS:gin_prod}. In \cite[Thm. 2.19]{CKW:sums_prod}, special care was given to the setting with all $d_1(i)=d_2(i)=1$ and an earlier question of optimally of the conditions in this setting was answered.
\medbreak

Even though Corollary \ref{MOPE_MDT_EX} doesn't cover all of the ensembles described by Theorem \ref{MOPE_MDT}, the result below, in particular $i)$, shows that they can be seen as the generic examples. As discussed in the introduction, it would be interesting to obtain an optimal set of conditions for which we the weights $\om_{1,n}$ in \eqref{MOPE_MDT_w} give rise to a \textit{positive} polynomial ensemble of multiplicative derivative type. A first step into this direction is the following in which we exploit the fact that the underlying space $L_{\MT{},\Sigma}^1(\R_{>0})$ induces certain conditions on the parameters.

\begin{prop} \label{MDT_COND}
In the setting of Theorem \ref{MOPE_MDT}, we have
\begin{itemize}
    \item[$i)$] all $d_1(i)=1$, 
    \item[$ii)$] all $\text{Re}\,a_i>-1$ and $\sum_{i=1}^{r} \text{Im}\,a_i=0$, 
    \item[$iii)$] $\sum_{i=1}^r (b_i-a_i) > - 1$ if all $d_2(i)=1$.
\end{itemize}
\end{prop}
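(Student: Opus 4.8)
The plan is to push the explicit form~\eqref{MOPE_MDT_w} through the two most robust properties of the Mellin transform of an $L^{1}_{\MT{},\Sigma}(\R_{>0})$-function: analyticity on the associated vertical strip, and the Riemann--Lebesgue lemma, Proposition~\ref{MT_RL}. Two preliminary facts are used repeatedly. First, $w_{1,1}$ is one of the functions $x^{k-1}w_{1,j}$ spanning $\mathcal{W}(n;w_{1,1},\dots,w_{1,r})=\spn{v_1,\dots,v_n}$, so $w_{1,1}\in L^{1}_{\MT{},\Sigma}(\R_{>0})$ and, by Definition~\ref{def_PE_full_} applied to $v_1,\dots,v_n$, $x^{m-1}w_{1,1}\in L^{1}(\R_{>0})$ for $m=1,\dots,n$; since $x^{s-1}\le 1+x^{n-1}$ on $\R_{>0}$ when $1\le s\le n$, the integral defining $\MT{w_{1,1}}$ converges absolutely on the whole closed strip $1\le\mathrm{Re}\,s\le n$ (of width $n-1\ge 1$, because $r<n$), so $\MT{w_{1,1}}$ has no pole with real part in $[1,n]$. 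Second, $c>0$ and $w_{1,1}$ is real-valued, so $\MT{w_{1,1}}$ is real on $\Sigma\subset\R$. For $i)$, suppose some $d_1(i_0)=0$; the standing hypothesis then forces all $d_2(i)=1$, so in~\eqref{MOPE_MDT_w} the denominator of $\MT{w_{1,1}}$ carries $r$ Gamma factors and the numerator only $\sum_i d_1(i)\le r-1$, the remaining factors ($c^{s}$, bounded on vertical lines, and a rational factor) growing at most polynomially. By the classical asymptotics $|\Gamma(\sigma+it)|\sim\sqrt{2\pi}\,|t|^{\sigma-\frac12}e^{-\pi|t|/2}$ as $|t|\to\infty$, along a vertical line $s=\sigma+it$ with $\sigma\in\Sigma$ the quantity $|\MT{w_{1,1}}(\sigma+it)|$ grows like $e^{(r-\sum_i d_1(i))\pi|t|/2}$ up to polynomial factors, hence tends to $\infty$, contradicting Proposition~\ref{MT_RL}. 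So all $d_1(i)=1$.

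For $ii)$: by $i)$ the numerator of $\MT{w_{1,1}}$ equals $\prod_{i=1}^r\Gamma(s+a_i)$, with poles at $s=-a_i-k$, $k\in\Z_{\ge 0}$, and these remain poles of $\MT{w_{1,1}}$ provided the representation~\eqref{MOPE_MDT_w} is reduced (no quotient $\Gamma(s+a_i)/\Gamma(s+b_{i'})$ degenerating to a rational function). If some $\mathrm{Re}\,a_i\le -1$, the real parts $-\mathrm{Re}\,a_i-k$ form a decreasing, unit-step sequence starting at a value $\ge 1$, so one of them lies in $[1,n]$ (an interval of length $\ge 1$), contradicting the absence of poles there; hence $\mathrm{Re}\,a_i>-1$ for all $i$. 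For $\sum_i\mathrm{Im}\,a_i=0$, note that on $\Sigma$ every factor of $\MT{w_{1,1}}$ other than $\prod_i\Gamma(s+a_i)$ is real, so $\prod_i\Gamma(s+a_i)\in\R$ there; conjugating and using $\overline{\Gamma(z)}=\Gamma(\bar z)$ yields $\prod_i\Gamma(s+a_i)=\prod_i\Gamma(s+\bar a_i)$ on $\Sigma$, hence on $\C$ by analytic continuation. Replacing $s$ by $s+1$ and dividing gives $\prod_i(s+a_i)=\prod_i(s+\bar a_i)$ as polynomials, so $\{a_i\}$ and $\{\bar a_i\}$ agree as multisets and $\sum_i a_i=\overline{\sum_i a_i}\in\R$.

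For $iii)$, assume additionally that all $d_2(i)=1$. By $i)$ we then have $r$ Gamma factors in the numerator of~\eqref{MOPE_MDT_w} and $r$ in the denominator, so along $s=\sigma+it$ the exponential contributions from Stirling cancel and $|\MT{w_{1,1}}(\sigma+it)|\sim C\,|t|^{\,\sum_i(\mathrm{Re}\,a_i-b_i)-1}$ for some $C>0$, the $-1$ coming from the factor $(s+b_1)^{-1}$ appearing in~\eqref{MOPE_MDT_w} for $j=1$. Proposition~\ref{MT_RL} forces the exponent to be negative, i.e.\ $\sum_i(b_i-\mathrm{Re}\,a_i)>-1$; since $\sum_i a_i$ is real by $ii)$, this is exactly $\sum_i(b_i-a_i)>-1$.

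The step I expect to need the most care is the reducedness input in $ii)$: when some $a_i$ is real with $b_{i'}-a_i\in\Z_{>0}$, the factor $\Gamma(s+a_i)/\Gamma(s+b_{i'})$ collapses to a reciprocal polynomial and the poles exploited above may be cancelled, so for an unreduced choice of parameters $\mathrm{Re}\,a_i\le -1$ is not a priori excluded. One must argue that such a coincidence lets one rewrite $w_{1,1},\dots,w_{1,r}$, in a form of the same type, using strictly fewer weights, and that the genuinely $r$-weighted (reduced) description meant in~\eqref{MOPE_MDT_w} has no such cancellations. This is essentially bookkeeping; the analytic content of the proposition is entirely contained in the strip-analyticity and Riemann--Lebesgue arguments above.
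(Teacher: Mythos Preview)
Your argument is correct and largely parallels the paper's proof: parts $i)$ and $iii)$ are proved identically via Stirling asymptotics and the Riemann--Lebesgue lemma (Proposition~\ref{MT_RL}), and the conjugate-pairs argument for $\sum_i\operatorname{Im}a_i=0$ is the same.

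The one substantive difference is how you obtain $\operatorname{Re}a_i>-1$. The paper first observes that $\Z_{\ge 1}\subset\Sigma$ forces $\Sigma=(s_0,\infty)$ with $s_0\in[0,1)$, so $\MT{w_{1,1}}$ is analytic on the entire half-plane $\operatorname{Re}s>s_0$; the first pole of $\Gamma(s+a_i)$, at $s=-a_i$, must therefore satisfy $-\operatorname{Re}a_i\le s_0<1$, done in one line. You instead work on the finite strip $1\le\operatorname{Re}s\le n$ and argue that the unit-spaced sequence of pole abscissae $-\operatorname{Re}a_i-k$ must hit an interval of length $n-1\ge 1$. Both routes are valid; the paper's is slightly slicker because passing to the half-plane avoids the sequence argument entirely. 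The reducedness caveat you raise at the end is present in both proofs (the paper leaves it implicit), so you are not missing anything the paper supplies.
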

\begin{proof}
    We will first show a weaker version of $ii)$, namely that all $d_1(i) \text{Re}\,a_i>-1$ and $\sum_{i=1}^{r} d_1(i) \text{Im}\,a_i=0$. After showing $i)$, this then gives $ii)$. For the first part, we note that $\Sigma$ contains $\Z_{\geq 1}$ (by integrability of the moments), hence it must be of the form $\Sigma=(s_0,\infty)$ for some $s_0\in[0,1)$. However, $\Gamma(s+a_i)^{d_1(i)}$ needs to be defined for all $s\in\Sigma$, so we must have that $d_1(i)\text{Re}(a_i)\geq -s_0>-1$. The second part follows from the fact that $\MT{v_1}$ is real-valued, hence $d_1(i) a_i\in\C\backslash\R$ must appear in conjugate pairs (consider, e.g., $\MT{v_1}(2)/\MT{v_1}(1)$).  \\    
    We will now use this, in combination with Proposition \ref{MT_RL}, to prove $i)$ and $iii)$. Since $w_{1,1}\in L^1_{\MT{},\Sigma}(\R_{>0})$, for every $c\in\Sigma$, we must have
        $$ \lim_{|t|\to\infty} \MT{w_{1,1}}(c+it) = 0. $$
    Denote $p=|d_1^{-1}(\{1\})|$ and $q=|d_2^{-1}(\{1\})|$. Stirling's asymptotic formula (see, e.g., \cite[Eq. 5.11.7]{DLMF}) implies that
        $$ \MT{w_{1,1}}(c+it) \asymp (it)^{(p-q)(c+it-\frac{1}{2})+\sum_{i=1}^r (d_1(i) a_i-d_2(i) b_i)-d_2(1)} e^{-(p-q)(c+it)} , $$
    as $|t|\to\infty$. Recall that the notation $f(t)\asymp g(t)$ as $t\to\infty$ denotes that $\lim_{t\to\infty} f(t)/g(t) = C$ for some $C\in\R_{\neq 0}$. If we write $t=|t| e^{i\arg t}$ and $i=e^{i\frac{\pi}{2}}$, the above becomes
    $$ |\MT{w_{1,1}}(c+it)|\, \asymp |t|^{(p-q)(c-\frac{1}{2})+\sum_{i=1}^r (d_1(i) \text{Re}(a_i)- d_2(i) b_i) -d_2(1)} e^{-(p-q)(c+t(\frac{\pi}{2}+\arg t))}, $$
    as $|t|\to\infty$. Note that here we used that $\sum_{i=1}^r d_1(i) \text{Im}\,a_i=0$. Assume now that some $d_1(i)=0$, i.e. $p<q$, then the power of the exponential factor is strictly positive as $t\to\infty$ and thus $\lim_{t\to\infty}|\MT{v_1}(c+it)|\, = \infty$. Hence we must have $i)$.\\
    Assume now that all $d_1(i)=1$ and all $d_2(i)=1$, i.e. $p=q$, then the power of the exponential factor is $0$ and we must have $\sum_{i=1}^r (d_1(i) \text{Re}\,a_i-b_i) - 1 < 0$. Together with $i)$ and $ii)$, this implies $iii)$.
\end{proof}

A consequence of the above is that one of the classes of weights that appears in Theorem~\ref{MOPE_MDT} necessarily have a sign change in $\R_{>0}$ and therefore don't give rise to a \textit{positive} ensemble.

\begin{cor}
    A function $v:\R_{>0}\to\R$ with a Mellin transform as in \eqref{MOPE_MDT_w} or \eqref{MOPE_MDT_om}, with some $d_1(i)=0$, possesses a sign change in $\R_{>0}$.
\end{cor}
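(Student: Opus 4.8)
The plan is to derive the claim directly from Proposition~\ref{MDT_COND}~$i)$. Suppose, for contradiction, that $v:\R_{>0}\to\R$ has a Mellin transform of the form \eqref{MOPE_MDT_w} or \eqref{MOPE_MDT_om} with some $d_1(i)=0$, and that $v$ does not change sign on $\R_{>0}$, i.e.\ $v\geq 0$ a.e.\ or $v\leq 0$ a.e. After possibly replacing $v$ by $-v$ (which does not affect the existence of a sign change, and only rescales the Mellin transform by $-1$), we may assume $v\geq 0$ a.e. on $\R_{>0}$. Since $v$ has a Mellin transform of the given form, its moments exist at least on $\Z_{\geq 1}$, so in particular $v\in L^1_{\MT{},\{k\}}(\R_{>0})$ for all $k\in\N$, and more precisely $v\in L^1_{\MT{},\Sigma}(\R_{>0})$ where $\Sigma$ is the largest interval on which \eqref{MOPE_MDT_w} (resp.\ \eqref{MOPE_MDT_om}) is well-defined.

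\textbf{Key steps.} First I would upgrade $v$ to a weight system: set $v_1=\dots=v_n=v$ is not quite right (that gives a degenerate ensemble), so instead the cleaner route is to observe that a \emph{single} nonnegative $v$ with Mellin transform as in \eqref{MOPE_MDT_w} (for $j=1$) or \eqref{MOPE_MDT_om} already plays the role of $\MT{v_1}$ or $\MT{\om}$, and to redo the relevant portion of the proof of Proposition~\ref{MDT_COND} using only this first function. Concretely, the argument for part $i)$ of Proposition~\ref{MDT_COND} uses only that $w_{1,1}=v_1\in L^1_{\MT{},\Sigma}(\R_{>0})$ together with the Riemann--Lebesgue lemma (Proposition~\ref{MT_RL}) and Stirling's formula: if some $d_1(i)=0$, i.e.\ $p<q$ in the notation there, then $|\MT{v}(c+it)|$ grows like $e^{(q-p)\,t(\pi/2+\arg t)}\to\infty$ as $t\to+\infty$ along the line $\operatorname{Re}s=c$, contradicting $\lim_{|t|\to\infty}\MT{v}(c+it)=0$. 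The same Stirling computation applies verbatim to \eqref{MOPE_MDT_om} since it has exactly the same $p$ versus $q$ imbalance in its Gamma-factor quotient (the polynomial correction $s^{j-1}/\prod(s+b_i)^{d_2(i)}$ only changes a bounded power of $|t|$, not the exponential rate). So if $v\geq 0$ a.e., then $v\in L^1_{\MT{},\Sigma}(\R_{>0})$ forces $\MT{v}$ to vanish at $i\infty$ on every vertical line in $\Sigma$, which fails whenever some $d_1(i)=0$; hence $v$ must change sign.

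\textbf{Writing it out.} I would phrase the corollary's proof as: ``Assume $v$ does not change sign; WLOG $v\geq0$ a.e.; then $v\in L^1_{\MT{},\Sigma}(\R_{>0})$ and $\MT{v}$ coincides with \eqref{MOPE_MDT_w} for $j=1$ (resp.\ \eqref{MOPE_MDT_om}) on $\Sigma$. The argument of Proposition~\ref{MDT_COND}~$i)$ — which only invokes integrability of $v$ on $\Sigma$, Proposition~\ref{MT_RL}, and Stirling's formula — shows that some $d_1(i)=0$ is incompatible with $\lim_{|t|\to\infty}\MT{v}(c+it)=0$ for $c\in\Sigma$. This contradiction proves $v$ has a sign change.'' One minor bookkeeping point to state carefully: the $\sum_i \operatorname{Im}a_i=0$ condition used in the Stirling asymptotics of Proposition~\ref{MDT_COND} came from $\MT{v_1}$ being real-valued; here $\MT{v}$ is likewise real on $\Sigma\subset\R$ (as the Mellin transform of a real function), so the same reduction applies, or one can note that a nonzero $\sum_i d_1(i)\operatorname{Im}a_i$ only shifts $\arg t$ by a bounded amount and does not rescue the exponential blow-up.

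\textbf{Main obstacle.} There is essentially no deep obstacle — the statement is a direct corollary and the real content sits in Proposition~\ref{MDT_COND}. The only thing requiring a sentence of care is the reduction to a single function $v$ playing the role of the ``first'' weight/ome\-ga: I must make sure the Stirling asymptotic with $j=1$ (so $s^{j-1}=1$) and a possibly empty/partial denominator product still exhibits strictly positive exponential growth when $p<q$, which it does since the exponential rate depends only on $p-q$ and not on the lower-order rational factor.
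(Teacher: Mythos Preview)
Your proposal is correct and follows essentially the same route as the paper: assume no sign change, conclude $v\in L^1_{\MT{},\Sigma}(\R_{>0})$ (the paper phrases this as $\MT{|v|}=\pm\MT{v}$), and then invoke the Riemann--Lebesgue/Stirling argument from Proposition~\ref{MDT_COND}~$i)$ to obtain a contradiction with some $d_1(i)=0$. Your extra care in noting that the proof of Proposition~\ref{MDT_COND}~$i)$ only uses the single function $w_{1,1}$, and that the same asymptotics apply to $\om_{1,n}$ since the exponential rate depends only on $p-q$, is accurate and makes explicit what the paper leaves implicit in its one-line citation.
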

\begin{proof}
    If $v$ doesn't have a sign change, we have $\MT{|v|}=\pm \MT{v}$ and therefore $v\in L^1_{\MT{},\Sigma}(\R_{>0})$. In that case, we can apply Proposition \ref{MDT_COND} to get a contradiction.
\end{proof}

By considering the simplest setting covered by Theorem \ref{MOPE_MDT} and Proposition \ref{MDT_COND}, i.e. $r=1$, we can show that the eigenvalue densities of JUE and LUE matrices are essentially the only orthogonal polynomial ensembles of multiplicative derivative type. This is the content of Proposition~\ref{MDT_OPE}, which we will prove now. 

\begin{proof}[Proof of Proposition \ref{MDT_OPE}]
    The fact that the eigenvalue densities of JUE and LUE matrices are orthogonal polynomial ensembles of multiplicative derivative type was already recorded in Corollary \ref{OPE_MDT_EX}. By Theorem \ref{MOPE_MDT}, the latter are characterized by a weight $w_{1,1}\in L^{1}_{\MT{},\Sigma}(\R_{>0})$ with a Mellin transform of the form
        $$\MT{w_{1,1}}(s) = c^s \, \frac{\Gamma(s+a)^{d_1}}{\Gamma(s+b)^{d_2}} \frac{1}{(s+b_2)^{d_2}},\quad s\in\Sigma,$$
    with $a\in\C$, $b,c\in\R$ and $d_1,d_2\in\{0,1\}$ with $\max\{d_1,d_2\}=1$. Since $c\neq 0$, without loss of generality, we can assume that $c=1$. Otherwise, we can consider the dilated weight $\tilde{w}_{1,1}(x)=w_{1,1}(cx)$ instead. By Proposition \ref{MDT_COND}, we must have $d_1=1$, $a>-1$ and $b-a>-1$ (so $b>-2$). Suppose that $d_2=0$, then 
    $$\MT{w_{1,1}}(s) = \Gamma(s+a),\quad s\in\Sigma,$$
    Hence, by Example \ref{MDT_GAMMA} and the Mellin inversion theorem, we must have that $w_{1,1} = \mathcal{G}^a$ on $\R_{>0}$. Suppose that $d_2=1$, then 
    $$\MT{w_{1,1}}(s) = \frac{\Gamma(s+a)}{\Gamma(s+b+1)},\quad s\in\Sigma,$$
    Therefore, by Example \ref{MDT_BETA} and the Mellin inversion theorem, we must have that $w_{1,1} = \mathcal{B}^{a,b+1}$ on $(0,1)$. 
\end{proof}

\subsection{Additive setting} \label{EX_A}

In the additive setting, the eigenvalue densities of LUE and GUE matrices are elementary examples of the (multiple) orthogonal polynomial ensembles covered by Theorem \ref{MOPE_ADT}. This follows immediately from Example \ref{ADT_GAMMA} \& \ref{ADT_GAUSSIAN}. The fact that they are of additive derivative type was already observed in \cite[Ex. 2.4]{FKK:PolyaE}.

\begin{cor} \label{OPE_ADT_EX} Suppose that $a>-1$.
\begin{itemize}
    \item[$i)$] If $X\sim\text{LUE}[n;a]$, i.e. $\text{EV}(X)\sim\text{OPE}[n;\mathcal{G}^a]$, then $\text{EV}(X)\sim\text{PE}_{\text{ADT}}[n;\mathcal{G}^{a+n-1}]$.
    \item[$ii)$] If $X\sim\text{GUE}[n]$, i.e. $\text{EV}(X)\sim\text{OPE}[n;\mathcal{G}]$, then $\text{EV}(X)\sim\text{PE}_{\text{ADT}}[n;\mathcal{G}]$.
\end{itemize}
\end{cor}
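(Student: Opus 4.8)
The plan is to reduce each assertion to the span identity of Definition~\ref{ADT_def} and verify it by a one-line computation of the successive derivatives of the candidate weight, followed by an invertible (triangular) change of basis among the polynomials of degree $\le n-1$. Recall that $\text{OPE}[n;w]=\text{PE}[v_1,\dots,v_n]$ with $v_j(x)=x^{j-1}w(x)$; since the eigenvalue densities of $\text{LUE}[n;a]$ and $\text{GUE}[n]$ are already positive polynomial ensembles, by Definition~\ref{PE_ADT_def} it suffices to exhibit $\om$ with $\spn{v_j}_{j=1}^{n}=\spn{\om^{(j-1)}}_{j=1}^{n}$ a.e. on $\R$ and with the regularity $\om\in C^{n-2}(\R)$, $\om^{(n-2)}\in AC_{\text{loc}}(\R)$ demanded there (the $L^1$-conditions needed for a well-defined ensemble are then immediate).

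For $i)$ take $\om=\mathcal{G}^{a+n-1}$, extended by $0$ to $\R_{\le 0}$; both families in question then vanish on $\R_{\le 0}$, so only $x>0$ matters. Leibniz' rule gives, for $0\le k\le n-1$,
\[
  \om^{(k)}(x)=x^{a+n-1-k}e^{-x}p_{k}(x)=x^{a}e^{-x}\cdot x^{n-1-k}p_{k}(x),\qquad x>0,
\]
with $p_{k}$ a polynomial of degree exactly $k$ whose constant term $(a+n-1)(a+n-2)\cdots(a+n-k)$ is a product of strictly positive factors because $a>-1$. Hence the $n$ polynomials $x^{n-j}p_{j-1}(x)$, $1\le j\le n$, all lie in the degree-$\le n-1$ space and have pairwise distinct lowest monomials $x^{n-j}$, so they form an invertible change of basis of $\spn{1,x,\dots,x^{n-1}}$, and therefore
\[
  \spn{\om^{(j-1)}}_{j=1}^{n}=x^{a}e^{-x}\spn{1,x,\dots,x^{n-1}}=\spn{x^{j-1}\mathcal{G}^{a}(x)}_{j=1}^{n}.
\]
The regularity $\om\in C^{n-2}(\R)$ holds precisely because $a+n-1>n-2$, which is where the hypothesis $a>-1$ enters beyond mere integrability; near $0$ one has $\om^{(n-2)}\sim x^{a+1}$, so $\om^{(n-2)}\in AC_{\text{loc}}(\R)$ and $\om^{(n-1)}\in L^{1}(\R)$. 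Equivalently — the route the pointer to Example~\ref{ADT_GAMMA} alludes to — one may pass to Laplace transforms: $\LT{v_j}=(-1)^{j-1}(\LT{\mathcal{G}^{a}})^{(j-1)}$ with $\LT{\mathcal{G}^{a}}(s)=\Gamma(a+1)(s+1)^{-(a+1)}$, while Proposition~\ref{LT_DIFF} gives $\LT{\om^{(j-1)}}(s)=s^{j-1}\LT{\om}(s)=\Gamma(a+n)s^{j-1}(s+1)^{-(a+n)}$; both spans equal $(s+1)^{-(a+n)}\spn{1,s,\dots,s^{n-1}}$ since $(s+1)^{-(a+j)}=(s+1)^{n-j}(s+1)^{-(a+n)}$, and injectivity of $\LT{}$ transfers the identity back to the functions.

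For $ii)$ take $\om=\mathcal{G}$, which is $C^{\infty}(\R)$ with super-exponential decay, so the regularity and integrability conditions are automatic. The Rodrigues formula for Hermite polynomials gives $\om^{(k)}(x)=(-1)^{k}H_{k}(x)e^{-x^{2}}$ with $\deg H_{k}=k$, hence
\[
  \spn{\om^{(j-1)}}_{j=1}^{n}=e^{-x^{2}}\spn{H_{0},\dots,H_{n-1}}=e^{-x^{2}}\spn{1,x,\dots,x^{n-1}}=\spn{x^{j-1}\mathcal{G}(x)}_{j=1}^{n},
\]
which is the claim; equivalently, via Example~\ref{ADT_GAUSSIAN} and Proposition~\ref{LT_DIFF}, $\LT{v_j}$ and $\LT{\om^{(j-1)}}$ each span $e^{s^{2}/4}\spn{1,s,\dots,s^{n-1}}$.

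I expect no deep obstacle: the statement is genuinely a corollary — indeed the $r=1$ instance of Proposition~\ref{MOPE_ADT_w_om}. The only points calling for care are (a) the verification that $\mathcal{G}^{a+n-1}\in C^{n-2}(\R)$ at the origin, which is exactly the role of the hypothesis $a>-1$, and (b) the triangular-basis step that upgrades ``a degree-$\le n-1$ polynomial whose lowest monomial is $x^{n-j}$'' into a basis of the space of polynomials of degree $\le n-1$.
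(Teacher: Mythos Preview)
Your proof is correct. Both the direct derivative computation and the Laplace-transform alternative you sketch are valid, and you have been careful about the two delicate points: the nonvanishing of the lowest coefficient of $p_k$ (ensuring the triangular basis argument goes through) and the regularity of $\mathcal{G}^{a+n-1}$ at the origin, which indeed uses $a>-1$ exactly.

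The paper's own argument is purely the Laplace-transform route: it simply observes that the Laplace transforms in Examples~\ref{ADT_GAMMA} and~\ref{ADT_GAUSSIAN} match the $r=1$ case of the templates \eqref{MOPE_ADT_w}--\eqref{MOPE_ADT_om}, and then invokes Proposition~\ref{MOPE_ADT_w_om} (equivalently Theorem~\ref{MOPE_ADT}). Your primary argument is more elementary and self-contained---it verifies the span identity of Definition~\ref{ADT_def} by hand via Leibniz and Rodrigues, without passing through the general machinery of Section~\ref{CLASS_A}---whereas the paper treats the corollary as a specialization of the main structural result. The gain of your direct approach is that it makes the statement independent of the heavier classification theorems; the gain of the paper's approach is that it exhibits the corollary as the $r=1$ instance of a uniform mechanism, which is the organizing theme of the section. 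Your ``equivalently'' paragraphs essentially reproduce the paper's method (via Proposition~\ref{LT_DIFF} rather than the full Proposition~\ref{MOPE_ADT_w_om}), so you have in fact given both proofs.
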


Since the additive derivative type structure of the eigenvalue densities of independent random matrices is preserved under taking their sums, i.e. if $X_1$ and $X_2$ are independent random matrices with $\text{EV}(X_1)\sim\text{PE}_{\text{ADT}}[n;\om_1]$ and $\text{EV}(X_2)\sim\text{PE}_{\text{ADT}}[n;\om_2]$, then $\text{EV}(X_1+X_2)\sim\text{PE}_{\text{ADT}}[n;\om_1\ast_{\LT{}}\om_2]$, see \cite{FKK:PolyaE}, the above can be used to obtain other examples of polynomial ensembles of derivative type as well. It turns out that adding LUE matrices to an LUE matrix only changes the value of the parameter in the underlying weight and adding GUE matrices to a GUE matrix only dilates the underlying weight. The specific structure of the Laplace transform of the underlying weight therefore remains unchanged. In order to go beyond this class, we will introduce a dilated version of the LUE, denoted by $\text{LUE}[n;a,c]$, with eigenvalue density $\text{OPE}[n;\mathcal{D}_c\mathcal{G}^{a}]$, in terms of the shift operator $(\mathcal{D}_c f) =f(x/c)/c$ and $c>0$. Note that by Example \ref{ADT_GAMMA}, we have
    $$ \LT{[\mathcal{D}_c\mathcal{G}^{a}]}(s)  = \frac{\Gamma(a+1)}{(1+cs)^{a+1}}. $$
The influence of a similarly dilated version of the GUE, denoted by $\text{GUE}[n;c]$, with eigenvalue density $\text{OPE}[n;\mathcal{D}_{\sqrt{2c}}\mathcal{G}]$, is less substantial. Note that by Example \ref{ADT_GAUSSIAN}, we have
    $$ \LT{[\mathcal{D}_{\sqrt{2c}}\mathcal{G}]}(s)  = \sqrt{\pi} \exp\left(cs^2\right),\quad \text{Re}(s)>0. $$
Theorem \ref{MOPE_ADT} shows that the eigenvalue densities of sums of these dilated LUE and GUE matrices are multiple orthogonal polynomial ensembles.

\begin{cor} \label{MOPE_ADT_EX}
    Let $n\in\N$ and take $\vec{n}\in\mathcal{S}^r$ with $\sz{n}=n$. Suppose that all $a_i+n_i-n>-1$ and that all $c_i>0$ are distinct. Let 
    $X_i\sim\text{GUE}[n;c_i]$, for $i\in\{1,\dots,q_0\}$,
    $ X_i\sim\text{LUE}[n;a_i+n_i-n,c_i]$, for $i\in\{q_0+1,\dots,q_1\}$, 
    be independent random matrices. Then $\text{EV}(X_1+\dots+X_{q_1})\sim\text{PE}_{\text{ADT}}[n;\om_{2,n}]$
    with
        $$ \LT{\om_{2,n}}(s) = \frac{\exp\left((\sum_{i=1}^{q_0} c_i)s^2\right)}{\prod_{i=q_0+1}^{q_1} (1+c_i s)^{a_i+n_i}},\quad \text{Re}(s)>0, $$
    and $\text{EV}(X_1+\dots+X_{q_1})\sim\text{MOPE}[n;w_{2,1},\dots,w_{2,q_1-q_0+1}]$ with
        $$ \LT{w_{2,j}}(s) = \frac{\exp\left((\sum_{i=1}^{q_0} c_i)s^2\right)}{\prod_{i=q_0+1}^{q_1} (1+c_i s)^{a_i}} \frac{s^{j-1}}{\prod_{i=q_0+1}^{\max\{q_0+j,q_1\}} (1+c_i s)},\quad \text{Re}(s)>0.$$
\end{cor}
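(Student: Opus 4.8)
The plan is to reduce Corollary~\ref{MOPE_ADT_EX} to the general machinery already established in Proposition~\ref{MOPE_ADT_w_om} together with the stability of additive derivative type under convolution. First I would recall from Corollary~\ref{OPE_ADT_EX} (and the dilation formulas stated just above the corollary) that each summand is a polynomial ensemble of additive derivative type: for $i\in\{1,\dots,q_0\}$ we have $\text{EV}(X_i)\sim\text{PE}_{\text{ADT}}[n;\mathcal{D}_{\sqrt{2c_i}}\mathcal{G}]$ with $\LT{[\mathcal{D}_{\sqrt{2c_i}}\mathcal{G}]}(s)=\sqrt{\pi}\exp(c_i s^2)$, and for $i\in\{q_0+1,\dots,q_1\}$ we have $\text{EV}(X_i)\sim\text{PE}_{\text{ADT}}[n;\mathcal{D}_{c_i}\mathcal{G}^{a_i+n_i-1}]$ — here one must track that $\text{LUE}[n;a_i+n_i-n,c_i]$, by Corollary~\ref{OPE_ADT_EX}$i)$ applied with parameter $a_i+n_i-n$ and then dilated by $c_i$, has derivative-type weight with shifted parameter $a_i+n_i-n+(n-1)=a_i+n_i-1$, whose Laplace transform is $\Gamma(a_i+n_i)/(1+c_i s)^{a_i+n_i}$. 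The condition $a_i+n_i-n>-1$ is exactly what guarantees $\mathcal{G}^{a_i+n_i-n}$ is a bona fide weight so that the LUE is well-defined.

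Next I would invoke the convolution property quoted in the paragraph preceding the corollary: if $\text{EV}(X_1)\sim\text{PE}_{\text{ADT}}[n;\om_1]$ and $\text{EV}(X_2)\sim\text{PE}_{\text{ADT}}[n;\om_2]$ for independent $X_1,X_2$, then $\text{EV}(X_1+X_2)\sim\text{PE}_{\text{ADT}}[n;\om_1\ast_{\LT{}}\om_2]$. Iterating this over the $q_1$ independent summands and using that the Laplace transform turns convolution into a product, $\text{EV}(X_1+\dots+X_{q_1})\sim\text{PE}_{\text{ADT}}[n;\om_{2,n}]$ with
$$\LT{\om_{2,n}}(s)=\Big(\prod_{i=1}^{q_0}\sqrt{\pi}\,e^{c_i s^2}\Big)\prod_{i=q_0+1}^{q_1}\frac{\Gamma(a_i+n_i)}{(1+c_i s)^{a_i+n_i}}=C\,\frac{\exp\big((\sum_{i=1}^{q_0}c_i)s^2\big)}{\prod_{i=q_0+1}^{q_1}(1+c_i s)^{a_i+n_i}},$$
which matches the claimed formula up to the irrelevant constant $C=\pi^{q_0/2}\prod_{i=q_0+1}^{q_1}\Gamma(a_i+n_i)$ (a prefactor in the weight does not change the ensemble and can be absorbed into the normalization $Z_n$). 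One should also note this $\LT{\om_{2,n}}$ is of the form \eqref{MOPE_ADT_om}: take $d_2(i)=1$ throughout on the $q=q_1-q_0$ ``bottom'' parameters $b_i=1/c_{q_0+i}$, encode the Gaussian factors via the exponential $\exp(c\int_{s_0}^s\prod(t+a_i)/(t+b_i)\,dt)$ by choosing the numerator polynomial so the integrand is a constant equal to $2\sum_{i=1}^{q_0}c_i$ after clearing denominators, with $n_i$ absorbing the exponents — the bookkeeping here is exactly the relabeling discussed in the remark after Proposition~\ref{MOPE_ADT_w_om}.

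Finally, to get the MOPE statement I would apply Proposition~\ref{MOPE_ADT_w_om} directly: with the parameters identified as above and the multi-index $\vec{n}\in\mathcal{S}^r$ (here $r=q_1-q_0+1$, accounting for the extra ``$+1$'' index coming from the $\sigma$-function / the $j=1,\dots,r$ range), the proposition asserts $\text{MOPE}[n;w_{2,1},\dots,w_{2,r}]=\text{PE}_{\text{ADT}}[n;\om_{2,n}]$ precisely when the $\LT{w_{2,j}}$ have the form \eqref{MOPE_ADT_w}; unwinding that formula with $d_2\equiv 1$, $b_i=1/c_{q_0+i}$ and the exponential factor as above yields exactly the stated $\LT{w_{2,j}}(s)=\exp((\sum c_i)s^2)\prod_{i}(1+c_i s)^{-a_i}\cdot s^{j-1}/\prod(1+c_i s)$, again modulo constants. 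Combining the two displayed identities gives $\text{EV}(X_1+\dots+X_{q_1})\sim\text{MOPE}[n;w_{2,1},\dots,w_{2,r}]$. I expect the main obstacle to be purely notational rather than conceptual: getting the dilation/shift conventions ($\mathcal{D}_c$, the $a_i+n_i-n\mapsto a_i+n_i$ shift, the factor $\sqrt{2c}$ for the Gaussian) to line up consistently across Corollary~\ref{OPE_ADT_EX}, the convolution rule, and the normalization \eqref{MOPE_ADT_w}–\eqref{MOPE_ADT_om}, and correctly matching the ``$\max\{q_0+j,q_1\}$'' truncation in the second factor to the surjection $\sigma$ in \eqref{MOPE_ADT_w}. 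One must also double-check membership in $L^1_{\LT{},\Sigma}(\R)$ for the convolved weight, which follows since a finite convolution of elements of $L^1_{\LT{},\{c\}}$ stays in $L^1_{\LT{},\{c\}}$ for any $c$ in the common strip, here any $c>0$.
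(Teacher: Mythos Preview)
Your proposal is correct and follows essentially the same approach as the paper: the corollary is stated without an explicit proof, but the surrounding text makes clear that it is obtained by combining Corollary~\ref{OPE_ADT_EX} (and the dilation formulas) to see each summand is of additive derivative type, the convolution rule $\text{EV}(X_1+X_2)\sim\text{PE}_{\text{ADT}}[n;\om_1\ast_{\LT{}}\om_2]$ from \cite{FKK:PolyaE} to get $\om_{2,n}$, and then Theorem~\ref{MOPE_ADT} (equivalently Proposition~\ref{MOPE_ADT_w_om}) to identify the resulting ensemble as a MOPE with the stated weights. You have correctly tracked the parameter shift $a_i+n_i-n\mapsto a_i+n_i-1$ and the role of the normalization constants, and your anticipated obstacle is indeed purely notational.
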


Recall that in the multiplicative setting, most of the ensembles covered by Theorem \ref{MOPE_MDT} arose as products of certain elementary ensembles (related to JUE and LUE matrices). This could be explained by the fact that the underlying functions $\om_{1,n}$ in \eqref{MOPE_MDT_om} could be decomposed into the functions associated to the elementary ensembles using the Mellin convolution. Remarkably, for the functions $\om_{2,n}$ in \eqref{MOPE_ADT_om}, a similar property in terms of the Laplace convolution doesn't hold. Hence, in the additive setting, the ensembles described in Theorem \ref{MOPE_ADT} have a much richer structure.
\medbreak

It turns out that in order to construct most of the functions $\om_{2,n}$ in \eqref{MOPE_ADT_om}, we require the following generalizations of the elementary weights. 

\begin{defi} \label{ADT_EX_BF} \phantom{}
    \begin{itemize}
        \item[$i)$] Let $d\in\Z_{\geq 0}$ and $c>0$. We define
        $$\text{Ai}_{d}^{c}(x) = \int_{1+i\R} \exp\left( (-1)^{\lfloor \frac{d}{2} \rfloor} c s^{d+2} + sx \right) \frac{ds}{2\pi i},\quad x\in\R. $$
        \item[$ii)$] Let $d\in\Z_{\geq 0}$, $a>-1$, $b\in\R_{\neq 0}$ and $c>0$. We define
    $$\text{Be}_{d}^{a,b,c}(x) = c^{-1} e^{-x/c} \sum_{k=0}^\infty \frac{b^k (x/c)^{dk+a}}{k! \Gamma(dk+a+1)},\quad x\in\R_{>0}.$$
    \end{itemize}
\end{defi}

The functions in $i)$ generalize the normal density $\mathcal{G}$ and Airy function $\text{Ai}$ since 
$$\text{Ai}_0^c(x) = \frac{1}{2\sqrt{\pi c}} \mathcal{G}(\frac{x}{\sqrt{2c}}) ,\quad \text{Ai}_1^c(x) = \frac{1}{(3c)^{\frac{1}{3}}} \text{Ai}(-\frac{x}{(3c)^{\frac{1}{3}}}),\quad x\in\R.$$ 
The functions $x\mapsto \text{Ai}_{2n+1}^1(-x)$ already appeared in \cite{HZ:Ai_gen}, where they are called higher-order Airy functions. The functions in $ii)$ generalize the gamma density $\mathcal{G}^a$ and variants of the $I$-Bessel function $I_a$ since
$$\text{Be}_0^{a,b,c}(x) = \frac{e^b}{\Gamma(a+1)}\mathcal{G}^a(x/c),\quad \text{Be}_1^{a,b,c}(x) = \left(\frac{x}{bc}\right)^{\frac{a}{2}} e^{-x/c} I_{a}(2\sqrt{bx/c}) ,\quad x\in\R_{>0}.$$
The functions $x\mapsto e^{x}\text{Be}_d^{a-1,1,1}(x)$ already appeared in \cite[Eq. 10.46.1]{DLMF}, where they are called generalized Bessel functions (or Wright functions).
\medbreak

Using the Laplace inversion theorem and the Fubini-Tonelli theorem, it is straightforward that show that the functions $\text{Ai}_{d}^c$ and $\text{Be}_{d}^{a,b,c}$ are in $L^1_{\LT{},\R_{>0}}(\R)$ and that their Laplace transforms are given by
    $$\LT{\text{Ai}_{d}}(s) = \exp\left( (-1)^{\lfloor \frac{d}{2} \rfloor} c s^{d+2} \right),\quad \LT{\text{Be}_{d}^{a,b,c}}(s) = \dfrac{\exp\left(\frac{b}{(1+cs)^{d}}\right)}{(1+cs)^{a+1}} \quad,\quad \text{Re}(s)>0.$$
\medbreak

The result below shows that at least some of these functions give rise a \textit{positive} polynomial ensemble of additive derivative type. To our knowledge, the associated ensembles have not been studied before.

\begin{prop}
    The function $\text{Be}_{1}^{a,b,c}$ defines an $n$-point polynomial ensemble of additive derivative type for all $n<a+2$.
\end{prop}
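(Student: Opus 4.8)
The plan is to verify that $\text{Be}_1^{a,b,c}$ is a function of additive derivative type with respect to itself, that it is integrable against polynomials up to the required degree, and that the resulting polynomial ensemble is positive, i.e. that the determinant in \eqref{PE} has a constant sign. The first point is the natural place to start: by the expression $\LT{\text{Be}_1^{a,b,c}}(s) = \exp(b/(1+cs))/(1+cs)^{a+1}$ recorded just above the statement, I would compute $s^{j-1}\LT{\text{Be}_1^{a,b,c}}(s)$ and check that, after pulling out the factor $\exp(b/(1+cs))/(1+cs)^{a+1}$, the remaining polynomial factors $s^{j-1}$ are exactly those appearing in \eqref{MOPE_ADT_w} with $r=1$, $q=1$, $b_1 = 1/c$ (up to the dilation $\mathcal{D}_c$), $d_1 \equiv 0$, $d_2 \equiv 1$; equivalently one identifies $\text{Be}_1^{a,b,c}$ with $\om_{2,n}$ from \eqref{MOPE_ADT_om} in the case $r=1$. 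Then Proposition~\ref{MOPE_ADT_w_om} (or directly Theorem~\ref{MOPE_ADT}) shows that the span of $\text{Be}_1^{a,b,c}, (\text{Be}_1^{a,b,c})', \dots, (\text{Be}_1^{a,b,c})^{(n-1)}$ coincides with the span of the shifted weights, so the ensemble, if it exists and is positive, is automatically of additive derivative type. What remains is the analytic input that Theorem~\ref{MOPE_ADT} does not supply: existence (the moment integrals converge) and positivity.

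For existence and the degree restriction, I would go back to the series defining $\text{Be}_1^{a,b,c}(x) = c^{-1}e^{-x/c}\sum_{k\geq 0} b^k (x/c)^{k+a}/(k!\,\Gamma(k+a+1))$ on $\R_{>0}$ (and $0$ on $\R_{\le 0}$). The factor $x^a$ near $x=0$ forces the moment $\int_0^\infty x^{m-1}\text{Be}_1^{a,b,c}(x)\,dx$ to converge at the origin precisely when $m-1+a > -1$, i.e. $m < a+2$; convergence at infinity is immediate from the $e^{-x/c}$ decay (which dominates the entire-function growth of the series, since $\text{Be}_1$ is, up to elementary factors, an $I$-Bessel function of argument $\sim\sqrt{x}$, growing like $e^{O(\sqrt{x})}$). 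This is exactly the stated hypothesis $n < a+2$: it guarantees $x\mapsto x^{k-1}\text{Be}_1^{a,b,c}(x)\in L^1(\R)$ for all $k\in\{1,\dots,n\}$, as required by Definition~\ref{def_PE_full_} applied to $v_j(x) = x^{j-1}\text{Be}_1^{a,b,c}(x)$. Equivalently one checks $\text{Be}_1^{a,b,c}\in L^1_{\LT{},\Sigma}(\R)$ with $\Sigma$ an interval in $\R_{>0}$, so that all the transform machinery of Section~\ref{PRELIM} applies.

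The main obstacle is positivity: one must show $\det[\,x_k^{j-1}\text{Be}_1^{a,b,c}(x_k)\,]_{j,k=1}^n$ does not change sign on $\R_{>0}^n$, equivalently that the system $\{x^{j-1}\text{Be}_1^{a,b,c}(x)\}_{j=1}^n$ is an (extended) Chebyshev system on $\R_{>0}$ — after factoring out $\text{Be}_1^{a,b,c}(x)>0$, this is just the Vandermonde, but the point is that the Vandermonde-times-Vandermonde structure in \eqref{PE} needs the \emph{weight} side to have a definite sign, which is handled by the Pólya-frequency-function characterization of \cite{FKK:PolyaE}. Concretely, I would invoke that a function $\om$ gives a positive polynomial ensemble of additive derivative type of order $n$ iff $\om$ is a Pólya frequency function of order $n$, and then verify that $\text{Be}_1^{a,b,c}$ is such: since $\text{Be}_1^{a,b,c} = \mathcal{D}_c\,[\,x\mapsto x^{a/2}e^{-x}I_a(2\sqrt{bx})\,]$ up to normalization, and $x^{a/2}e^{-x}I_a(2\sqrt{bx})$ is (a scalar multiple of) the Laplace convolution exponential generating the $I$-Bessel weight already known from \cite{CVA:MOP_IB} to be a valid weight, its total positivity of the required order follows; alternatively, the condition $n<a+2$ is precisely the threshold at which the $I$-Bessel-type density $x^{a/2}e^{-x}I_a(2\sqrt{bx})$ retains enough smoothness/integrability at $0$ to be PF of order $n$. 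I would spell out this last implication by checking the sign condition on $n\times n$ minors directly from the convolution representation $\text{Be}_1^{a,b,c}$ (writing it, via its Laplace transform $\exp(b/(1+cs))(1+cs)^{-a-1}$, as an infinite Laplace convolution of gamma densities), which makes each finite minor a limit of products of totally positive kernels. This TP/PF verification — rather than the purely algebraic identification of the span or the elementary integrability bookkeeping — is where the real work lies.
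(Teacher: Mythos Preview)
Your outline correctly isolates positivity as the crux, and you are right that the Pólya--frequency characterization from \cite{FKK:PolyaE} is the bridge to use. But the concrete steps you propose each break down.

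First, the entire opening paragraph is beside the point. The claim is only that $\text{PE}_{\text{ADT}}[n;\text{Be}_1^{a,b,c}]$ exists; by Definition~\ref{PE_ADT_def} this means nothing more than that $\omega=\text{Be}_1^{a,b,c}$ has $n-1$ derivatives, these are integrable against monomials, and the associated ensemble is nonnegative. There is no span to identify and no need to match $\text{Be}_1^{a,b,c}$ with a specific $\omega_{2,n}$ from \eqref{MOPE_ADT_om}; in fact your proposed match with $r=1$ fails, since $\exp(b/(1+cs))$ requires a double pole in the integrand, not a simple one.

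Second, the integrability bookkeeping is off. The implication ``$m-1+a>-1$, i.e.\ $m<a+2$'' is false: $m-1+a>-1$ gives $m>-a$, which for $a>-1$ is automatic for all $m\geq 1$. The genuine origin of the threshold $n<a+2$ is that the \emph{derivatives} $(\text{Be}_1^{a,b,c})^{(j-1)}\sim x^{a-j+1}$ near $0$ must be integrable, forcing $a-j+1>-1$ for $j\leq n$; alternatively and more to the point, it is the order up to which the associated Pólya frequency property holds.

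Third, and decisively, your proposed verification of total positivity does not work. You suggest factoring $\LT{\text{Be}_1^{a,b,c}}(s)=\exp(b/(1+cs))(1+cs)^{-a-1}$ as an ``infinite Laplace convolution of gamma densities'', but $\exp(b/(1+cs))$ is not a product of factors of the form $(1+c's)^{-\alpha}$; expanding the exponential gives a \emph{sum} of such terms, which yields positivity of the function but not of the $n\times n$ minors of $(x,y)\mapsto\omega(x-y)$. The paper instead applies a ready-made criterion of Karlin (\cite[\S 3.2, Thm.~2.1]{Karlin}): after the harmless reduction $c=1$ (since $\text{Be}_1^{a,b,c}=\mathcal{D}_c\text{Be}_1^{a,b,1}$), one has $e^{x}\text{Be}_1^{a,b,1}(x)=\sum_{k\geq 0}c_k\,x^{k+a}/\Gamma(k+a+1)$ with $c_k=b^k/k!$, and Karlin's theorem asserts that such a series is a PF function of every order $n<a+2$ provided $\sum c_k<\infty$ and the Toeplitz minors $\det[c_{i-j}]$ are nonnegative (both hold here; multiplication by $e^{-x}$ preserves the PF property). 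Then \cite[Thm.~2.9(1)]{FKK:PolyaE} converts PF of order $n$ into existence of $\text{PE}_{\text{ADT}}[n;\cdot]$. That is the missing ingredient you need.
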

\begin{proof}
    It suffices to show this for $c=1$ as $\text{Be}_{1}^{a,b,c} = \mathcal{D}_c\text{Be}_{1}^{a,b,1}$. To do so, we will employ \cite[\S 3.2: Thm. 2.1]{Karlin}, where it was shown that functions of the form
    $$ f_a(x) = \begin{dcases}
        \sum_{k=0}^{\infty} c_k \frac{x^{k+a}}{\Gamma(k+a+1)},\quad x>0, \\
        0,\quad x\leq 0,
    \end{dcases}$$
    are (additive) Pólya frequency functions of any order $n<a+2$, whenever $(c_k)_{k\in\N}$ satisfies $0<\sum_{k=0}^{\infty} c_k <\infty$ and $\det[c_{i-j}]_{i,j=1}^n\geq 0$ ($c_k:=0$ for $k<0$). In a subsequent remark, it was further noted that these conditions hold for $c_k=1/k!$. % and that one can extend to any $a\in\Z_{\geq 0}$ using \cite[\S 3.2: Lem. 2.3]{Karlin}.
    The desired result then follows from \cite[Thm. 2.9 (1)]{FKK:PolyaE}.
\end{proof}

We can now slightly extend Corollary \ref{MOPE_ADT_EX}.

\begin{cor}
    Let $n\in\N$ and take $\vec{n}\in\mathcal{S}^r$ with $\sz{n}=n$. Suppose that all $a_i+n_i-n>-1$, all $b_i\in\R_{\neq 0}$ and that all $c_i>0$ are distinct. Let 
    $X_i\sim\text{GUE}[n;c_i]$, for $i\in\{1,\dots,q_0\}$,
    $ X_i\sim\text{LUE}[n;a_i+n_i-n,c_i]$, for $i\in\{q_0+1,\dots,q_1\}$, and 
    $X_i$ with $\text{EV}(X_j)\sim\text{PE}_{\text{ADT}}(n;\text{Be}_{1}^{a_i+n_i-1,b_i,c_i})$, for $i\in\{q_1+1,\dots,q_2\}$,
    be independent random matrices. Then $\text{EV}(X_1+\dots+X_{q_2})\sim\text{PE}_{\text{ADT}}[n;\om_{2,n}]$
    with
        $$ \LT{\om_{2,n}}(s) = \frac{\exp\left((\sum_{i=1}^{q_0} c_i)s^2 + \sum_{i=q_1+1}^{q_2} \frac{b_i}{1+c_i s}\right)}{\prod_{i=q_0+1}^{q_2} (1+c_i s)^{a_i+n_i}},\quad \text{Re}(s)>0, $$
    and $\text{EV}(X_1+\dots+X_{q_2})\sim\text{MOPE}[n;w_{2,1},\dots,w_{2,q_2-q_0+1}]$ with
        $$ \LT{w_{2,j}}(s) = \frac{\exp\left((\sum_{i=1}^{q_0} c_i)s^2 + \sum_{i=q_1+1}^{q_2} \frac{b_i}{1+c_i s}\right)}{\prod_{i=q_0+1}^{q_2} (1+c_i s)^{a_i}} \frac{s^{j-1}}{\prod_{i=q_0+1}^{\max\{q_0+j,q_2\}} (1+c_is)},\quad \text{Re}(s)>0.$$

\end{cor}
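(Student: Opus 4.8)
The plan is to combine three ingredients that are already available: the elementary derivative type structure of dilated GUE, dilated LUE and the newly-introduced $\text{Be}_1^{a,b,c}$ ensembles (Corollary~\ref{OPE_ADT_EX}, the remarks preceding Corollary~\ref{MOPE_ADT_EX}, and the preceding Proposition), the fact that additive derivative type is preserved under sums of independent matrices with Laplace convolution of the underlying $\om$'s, and finally Theorem~\ref{MOPE_ADT} to upgrade the resulting polynomial ensemble of additive derivative type to a multiple orthogonal polynomial ensemble. First I would record the Laplace transforms of the individual building blocks: $\LT{[\mathcal{D}_{\sqrt{2c_i}}\mathcal{G}]}(s) = \sqrt{\pi}\exp(c_i s^2)$ for $i\in\{1,\dots,q_0\}$, $\LT{[\mathcal{D}_{c_i}\mathcal{G}^{a_i+n_i-1}]}(s)$ proportional to $(1+c_i s)^{-(a_i+n_i)}$ for $i\in\{q_0+1,\dots,q_1\}$, and $\LT{\text{Be}_1^{a_i+n_i-1,b_i,c_i}}(s)$ proportional to $\exp\!\big(\tfrac{b_i}{1+c_i s}\big)(1+c_i s)^{-(a_i+n_i)}$ for $i\in\{q_1+1,\dots,q_2\}$, where for the LUE blocks one uses that $\text{Be}_0^{a,b,c}$ degenerates and for the GUE blocks one is in the $d_1(i)=1$ limiting regime.

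Next I would invoke the convolution property: since each $X_i$ satisfies $\text{EV}(X_i)\sim\text{PE}_{\text{ADT}}[n;\om_i]$ with $\om_i$ one of the above weights (all lying in $L^1_{\LT{},\R_{>0}}(\R)$, which is needed for the convolution law), independence gives
\begin{equation*}
    \text{EV}(X_1+\dots+X_{q_2})\sim\text{PE}_{\text{ADT}}[n;\om_1\ast_{\LT{}}\dots\ast_{\LT{}}\om_{q_2}],
\end{equation*}
and the Laplace transform of the convolution is the product of the individual Laplace transforms. Multiplying the factors listed above and absorbing the (harmless, nonzero) constant prefactors, I obtain exactly
\begin{equation*}
    \LT{\om_{2,n}}(s) = \frac{\exp\!\big((\sum_{i=1}^{q_0} c_i)s^2 + \sum_{i=q_1+1}^{q_2} \tfrac{b_i}{1+c_i s}\big)}{\prod_{i=q_0+1}^{q_2}(1+c_i s)^{a_i+n_i}},\quad \text{Re}(s)>0,
\end{equation*}
which is the claimed formula. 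This establishes the first assertion of the corollary.

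For the second assertion, I would check that $\om_{2,n}$ has a Laplace transform of the form \eqref{MOPE_ADT_om}. Writing $(s+b_i')$ in place of $(1+c_i s) = c_i(s + 1/c_i)$ after pulling out constants, identifying the exponential factor with $\exp\!\big(c\int_{s_0}^s \prod_i (t+a_i)^{d_1(i)}/(t+b_i)^{d_2(i)}\,dt\big)$ for a suitable choice of the $d_1, d_2$ data (the GUE blocks contribute the $s^2$ term via $d_1 = 1$, $d_2 = 0$ factors, the $\text{Be}_1$ blocks contribute the $\tfrac{1}{1+c_i s}$ terms, and the LUE/Bessel blocks contribute the denominator), and reading off that the multi-index shift $n_i$ appears in the exponents exactly as in \eqref{MOPE_ADT_om}, one verifies the hypotheses of Theorem~\ref{MOPE_ADT}~$iii)\Leftrightarrow ii)$ (with $r = q_2 - q_0$ weights, and $r<n$ since $n = \sz{n}$ and there are fewer than $n$ distinct $c_i$'s in the intended parameter range). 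Theorem~\ref{MOPE_ADT} then yields $\text{PE}_{\text{ADT}}[n;\om_{2,n}] = \text{MOPE}[n;w_{2,1},\dots,w_{2,q_2-q_0+1}]$ with the $w_{2,j}$ given by \eqref{MOPE_ADT_w}, and substituting the parameters gives the stated Laplace transforms. The main obstacle I anticipate is purely bookkeeping: matching the relabeled parameters $(\vec a,\vec b)$ and the surjection $\sigma$ of Remark after Proposition~\ref{MOPE_ADT_w_om} to the three groups of blocks so that the denominator $\prod_{i=q_0+1}^{\max\{q_0+j,q_2\}}(1+c_i s)$ in $\LT{w_{2,j}}$ comes out correctly, and confirming that the $\text{Be}_1$ blocks indeed fall under the scope of \eqref{MOPE_ADT_w} (i.e. that $\exp(\tfrac{b_i}{1+c_i s})$ is of the allowed exponential-of-integral form) rather than introducing a genuinely new type of factor — but this is exactly the content of the already-proven Proposition and of Corollary~\ref{MOPE_ADT_EX}, so no new analytic input is required.
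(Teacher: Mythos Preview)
Your proposal is correct and follows exactly the route the paper intends: the corollary is stated without proof in the paper because it is a direct extension of Corollary~\ref{MOPE_ADT_EX}, obtained by adjoining the $\text{Be}_1$ blocks (whose derivative-type structure and Laplace transform were just established) to the list of admissible summands, applying the closure of $\text{PE}_{\text{ADT}}$ under independent sums via $\ast_{\LT{}}$ from \cite{FKK:PolyaE}, and then reading off the MOPE structure from Theorem~\ref{MOPE_ADT}. Your identification of the three ingredients and the residual bookkeeping concern are both accurate; no additional analytic input is needed.
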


Similarly as in the multiplicative setting, it turns out that many of the functions $\om_{2,n}$ in \eqref{MOPE_ADT_om} are not admissible, i.e. they don't belong to $L^1_{\LT{},\Sigma}(\R)$ or don't lead to a \textit{positive} ensemble. A first step into this direction is the following in which we examine some constraints induced by the underlying space $L_{\LT{},\Sigma}^1(\R)$.

\begin{prop} \label{ADT_COND}
In the setting of Theorem \ref{MOPE_ADT}, we have $\sum_{i=1}^r d_1(i) \text{Im}\,a_i = 0$. If we write
$$ \exp\left(c \int_{s_0}^s \prod_{i=1}^r \frac{(t+a_i)^{d_1(i)}}{(t+b_i)^{d_2(i)}} dt \right) =  \frac{\exp\left(\sum_{i=1}^{d} \alpha_i s^{k+1} + O(s)\right)}{\prod_{i=1}^r (s+b_i)^{d_2(i)\beta_i}},\quad s\to\infty,$$
in terms of $\alpha_i,\beta_i\in\R$, then
\begin{itemize} 
    \item[$i)$] $(-1)^{\lfloor \frac{d+1}{2} \rfloor} \alpha_d<0$ if $d \geq 1$ and $d\equiv_2 1$,
    \item[$ii)$] $ (-1)^{\lfloor \frac{d+1}{2} \rfloor} (s(d+1)\alpha_{d}+\alpha_{d-1})<0$ for all $s\in\Sigma$ if $d \geq 1$ and $d\equiv_2 0$,
    \item[$iii)$] $\sum_{i=1}^r \beta_i > - 1$ if $d=0$.
\end{itemize}
\end{prop}
\begin{proof}
    Since $\LT{v_1}$ is real-valued, the $a_i\in\C\backslash\R$ must appear in conjugate pairs (consider, e.g., $(\LT{v_1})'(s_0)$), hence after a partial fraction decomposition all $\alpha_k\in\R$. We will use Proposition \ref{LT_RL}, i.e.
    $$ \lim_{|t|\to\infty} \LT{v_1}(s+it)=0, $$
    for every $s\in\Sigma$. Suppose that $d\geq 1$, then the asymptotics of $|\LT{v_1}(s+it)|$ are determined by the exponential part in the above representation. The stated conditions then follow from the fact that
        $$ \text{Re}\left(\sum_{k=1}^{d} \alpha_{k} (s+it)^{k+1} \right) = \begin{dcases}
            (s(d+1)\alpha_{d}+\alpha_{d-1}) t^{d} (1+o(1)) ,\quad d \equiv_4 0,\\
            - \alpha_{d} t^{d+1} (1+o(1)) ,\quad d \equiv_4 1, \\
            -(s(d+1)\alpha_{d}+\alpha_{d-1}) t^{d} (1+o(1)) ,\quad d \equiv_4 2,\\
            \alpha_{d} t^{d+1} (1+o(1)) ,\quad d \equiv_4 3.\\
        \end{dcases} $$
    Suppose that $d=0$, then the asymptotics of $|\LT{v_1}(s+it)|$ are determined by the rational part in the above representation. Since $d=0$, we must have all $d_2(i)=1$, and hence $\sum_{i=1}^{r} \beta_i + 1  > 0$.
\end{proof}

\textit{Positivity} of the ensemble also seems to induce certain conditions on the parameters of the functions $\om_{2,n}$ in \eqref{MOPE_ADT_om}. For example, several functions of the form $\text{Ai}_{d}^{c}$ are known to oscillate on part of the real line so that $\om_{2,n}$ can not be of this form. In fact, it seems that whenever $\LT{\om_{2,n}}$ contains of factor of the form $\LT{\text{Ai}_{d}^{c}}$ with $d\geq 1$, the function $\om_{2,n}$ will have some oscillations on the real line. Evidence of this may be provided by applying a steepest descent analysis on its inverse Laplace transform. We will not explore this here as it is rather technical to describe the conditions on the parameters under which such an asymptotic analysis can be carried out. As mentioned in the introduction, it would be interesting to have an optimal set of conditions for which $\om_{2,n}$ in \eqref{MOPE_ADT_om} gives rise to a \textit{positive} ensemble of additive derivative type, because of the connection to (additive) Pólya frequency functions of finite order.
\medbreak

By considering the simplest setting covered by Theorem \ref{MOPE_ADT}, i.e. $r=1$, we can show that the eigenvalue densities of LUE and GUE matrices are essentially the only orthogonal polynomial ensembles of additive derivative type. This is the content of Proposition \ref{ADT_OPE}, which we will prove now.

\begin{proof}[Proof of Proposition \ref{ADT_OPE}]
    The fact that the eigenvalue densities of LUE and GUE matrix are orthogonal polynomial ensembles of additive derivative type was already observed in Corollary \ref{OPE_ADT_EX}. By Theorem \ref{MOPE_ADT}, the latter are characterized by a weight $w_{2,1}\in L^{1}_{\LT{},\Sigma}(\R)$ with a Laplace transform of the form
        $$ \LT{w_{2,1}}(s) = \exp\left(c \int_{s_0}^s \frac{(t+a)^{d_1}}{(t+b)^{d_2}} dt \right) \frac{1}{(s+b)^{d_2}},\quad s\in\Sigma, $$
    where $a\in\C$, $b,c,s_0\in\R$ and $d_1,d_2\in\{0,1\}$ with $\max\{d_1,d_2\}=1$. Suppose that $d_2=0$, then
    $$ \LT{w_{2,1}}(s) = \exp\left(\alpha_0+\alpha_1 s + \alpha_2 s^2 \right),\quad s\in\Sigma. $$
    with $\alpha_0,\alpha_1\in\R$ and $\alpha_2>0$ according to Proposition \ref{ADT_COND}. We can then assume that $\alpha_0=\alpha_1=0$ and $\alpha_2=1/4$, otherwise we can apply an appropriate scaling and affine transformation to $w_{2,1}$. In that case, $\LT{w_{2,1}}(s) = \exp\left(s^2/4\right)$ for $s\in\Sigma$ and thus $w_{2,1}=\mathcal{G}/\sqrt{\pi}$ on $\R$ by Example \ref{ADT_GAUSSIAN} and the Laplace inversion theorem. Suppose that $d_2=1$, then
    $$ \LT{w_{2,1}}(s) = \frac{\exp\left(\alpha_0+\alpha_1 s\right)}{(s+b)^{\beta+1}},\quad s\in\Sigma, $$
    with $\alpha_0,\alpha_1\in\R$, $-b\not\in\Sigma$ and $\beta>-1$ according to Proposition \ref{ADT_COND}. We can then assume that $\alpha_0=\alpha_1=0$ and $b=1$, otherwise we can apply an appropriate scaling and affine transformation to $w_{2,1}$. In that case, $\LT{w_{2,1}}(s) = 1/(s+1)^{\beta+1}$ for $s\in\Sigma$ (as $s>0$) and thus $w_{2,1}=\mathcal{G}^{\beta}/\Gamma(\beta+1)$ on $\R_{>0}$ by Example \ref{ADT_GAMMA} and the Laplace inversion theorem.
\end{proof}

\section{Finite free probability} \label{FFP}

The goal of this section is two-fold. First, we will provide a partial answer to an open problem posed by Martínez-Finkelshtein at the Hypergeometric and Orthogonal Polynomials Event in Nijmegen (May, 2024), see \cite{HOPE}, related to orthogonality of the finite free multiplicative and additive convolution of polynomials from free probability. We will do this by interpreting the problem in terms of random matrices and by making use of the notion of functions (or polynomial ensembles) of multiplicative and additive derivative type. Second, we will explain how in a more restricted setting the associated multiple orthogonal polynomials come into play. We will show that such polynomials (de)compose naturally under the finite free multiplicative and additive convolution.

\subsection{Multiplicative setting}

We first recall the notion of finite free multiplicative convolution, see \cite[Def. 1.4]{MSS:fin_free_conv}.

\begin{defi}
    The finite free multiplicative convolution of two polynomials $p_1(x) = \sum_{k=0}^n p_1[k] x^k $ and $p_2(x) = \sum_{k=0}^n p_2[k] x^k $ of degree at most $n$ is defined as
        $$ (p_1 \boxtimes_n p_2)(x) = \sum_{k=0}^n \frac{p_1[k]p_2[k]}{(-1)^{n-k} \binom{n}{k}} x^k . $$
\end{defi}

The first part of the open problem involves orthogonality of the finite free multiplicative convolution of two polynomials that satisfy given orthogonality relations. More precisely, one considers polynomials $p_{1,n}$ and $p_{2,n}$ of degree $n$ that satisfy
    $$\int_0^\infty p_{j,n}(x) q_{j,k}(x) dx = 0,\quad k\in\{0,\dots,n-1\},\quad j\in\{1,2\},$$
for some functions $(q_{1,k})_{k=0}^{n-1}$ and $(q_{2,k})_{k=0}^{n-1}$ and then asks whether their finite free multiplicative convolution also satisfies
    $$\int_0^\infty (p_{1,n}\boxtimes_n p_{2,n})(x) q_k(x) dx = 0,\quad k\in\{0,\dots,n-1\},$$
for some explicit functions $(q_k)_{k=0}^{n-1}$.
\medbreak

We will interpret this problem in terms of products of random matrices using the result below, which is essentially a reformulation of \cite[Thm. 1.5]{MSS:fin_free_conv} in terms of random matrices instead of deterministic ones. Note that it is essentially the finite version of the free convolution law for products of random matrices, i.e. the fact that, if $\mu_j$ is the limiting eigenvalue distribution of $X_j$, the eigenvalue distribution of $X_1X_2$ tends to the free multiplicative convolution $\mu_1\boxtimes\mu_2$ as $n\to\infty$, whenever one of the underlying matrix ensembles is invariant under unitary conjugation.

\begin{prop} \label{MC_AP}
    Let $X_1$ and $X_2$ be independent $n\times n$ normal random matrices and assume that one of the matrix ensembles is invariant under unitary conjugation. Then, 
    $$\mathbb{E}[\det(xI_n-X_1X_2)] = \mathbb{E}[\det(xI_n-X_1)] \boxtimes_n \mathbb{E}[\det(xI_n-X_2)].$$
    Consequently, also 
    $$\mathbb{E}[\det(xI_n-(X_1X_2)(X_1X_2)^\ast)] = \mathbb{E}[\det(xI_n-X_1X_1^\ast)] \boxtimes_n \mathbb{E}[\det(xI_n-X_2X_2^\ast)].$$
\end{prop}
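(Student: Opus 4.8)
The plan is to reduce the statement to the known deterministic identity \cite[Thm. 1.5]{MSS:fin_free_conv}, or equivalently to its random-matrix reformulation as stated in the first half of Proposition \ref{MC_AP}, and then to carry out two conditioning arguments. First I would recall that \cite[Thm. 1.5]{MSS:fin_free_conv} asserts that for \emph{deterministic} matrices $A,B$ with $B$ (say) conjugated by a Haar unitary $U$, one has $\mathbb{E}_U[\det(xI_n - AUBU^\ast)] = \det(xI_n-A) \boxtimes_n \det(xI_n-B)$. To prove the first display of Proposition \ref{MC_AP}, one conditions on the matrix whose ensemble is \emph{not} assumed unitarily invariant (say $X_1$), uses unitary invariance of the law of $X_2$ to write $X_2 \stackrel{d}{=} U X_2 U^\ast$ and hence $X_1X_2 \stackrel{d}{=} X_1 U X_2 U^\ast$ with $U$ Haar-distributed and independent, applies the deterministic identity in the conditional expectation over $U$, then takes expectations over $X_1$ and $X_2$; linearity of expectation and the fact that $p \mapsto p \boxtimes_n q$ is linear in $p$ for fixed $q$ (and symmetric) let the two outer expectations pass through the $\boxtimes_n$. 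This is the content the excerpt tells us to assume is already in place for the first assertion.

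For the ``consequently'' part, the key observation is that $(X_1X_2)(X_1X_2)^\ast = X_1 X_2 X_2^\ast X_1^\ast$, and I would like to view this as a product of two normal (indeed positive semidefinite Hermitian) matrices to which the first assertion applies. The natural choice is $Y_1 = X_1 X_1^\ast$ and $Y_2 = X_2 X_2^\ast$: these are Hermitian, hence normal, and at least one of them inherits unitary invariance from the corresponding $X_j$ (if the law of $X_j$ is invariant under $X_j \mapsto U X_j$, or under two-sided conjugation, then $X_jX_j^\ast$ is unitarily invariant). The remaining point is the matrix identity $\det(xI_n - X_1X_2X_2^\ast X_1^\ast) = \det(xI_n - X_1^\ast X_1 X_2 X_2^\ast)$, i.e. that $AB$ and $BA$ have the same characteristic polynomial for square $A = X_1$, $B = X_2 X_2^\ast X_1^\ast$; more care is needed to land exactly on $Y_1 Y_2 = X_1X_1^\ast X_2 X_2^\ast$. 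Concretely, $\det(xI_n - (X_1X_2)(X_1X_2)^\ast) = \det(xI_n - X_1(X_2X_2^\ast)X_1^\ast)$, and using the commutation-of-characteristic-polynomials identity with $A=X_1$ and $B = (X_2X_2^\ast)X_1^\ast$ gives $\det(xI_n - (X_2X_2^\ast)X_1^\ast X_1) = \det(xI_n - (X_2X_2^\ast)(X_1^\ast X_1))$; then another application moves $X_1^\ast X_1$ to $X_1 X_1^\ast$. Since $X_1^\ast X_1$ and $X_1 X_1^\ast$ also share a characteristic polynomial, one can equivalently work with $Y_1 = X_1X_1^\ast$ throughout, so that $\det(xI_n - (X_1X_2)(X_1X_2)^\ast) = \det(xI_n - Y_1 Y_2)$ as polynomials in $x$, pointwise in $(X_1,X_2)$.

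Having rewritten the left-hand side, I would then apply the first assertion of Proposition \ref{MC_AP} to the pair $(Y_1, Y_2)$: they are independent (being functions of the independent $X_1, X_2$), both normal, and one of the two ensembles is unitarily invariant. This yields
$$\mathbb{E}[\det(xI_n - Y_1 Y_2)] = \mathbb{E}[\det(xI_n - Y_1)] \boxtimes_n \mathbb{E}[\det(xI_n - Y_2)],$$
which upon substituting $Y_j = X_jX_j^\ast$ and using the pointwise identity above is exactly the claimed second display.

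I expect the main obstacle to be bookkeeping rather than anything deep: making sure the commutation identity $\det(xI-AB)=\det(xI-BA)$ is applied so that the final product is literally $X_1X_1^\ast \cdot X_2X_2^\ast$ in the order needed, and checking that the unitary-invariance hypothesis genuinely transfers from $X_j$ to $X_jX_j^\ast$ — i.e. that whichever of $X_1, X_2$ has a unitarily invariant law produces a $Y_j$ whose law is invariant under $Y_j \mapsto U Y_j U^\ast$. The only subtlety is that ``invariant under unitary conjugation'' for $X_j$ should be read as giving $X_j X_j^\ast \stackrel{d}{=} U (X_j X_j^\ast) U^\ast$, which is immediate if the law of $X_j$ is invariant under $X_j \mapsto U X_j U^\ast$ or even just under $X_j \mapsto U X_j$; this should be spelled out but is routine.
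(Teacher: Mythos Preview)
Your argument for the first display is exactly the paper's: write the expectation as a double integral, insert a Haar unitary via the invariance assumption, apply the deterministic identity from \cite{MSS:fin_free_conv}, and pull the two expectations through $\boxtimes_n$ by bilinearity.

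For the second display your route differs from the paper's. You use the cyclic identity $\det(xI-AB)=\det(xI-BA)$ to rewrite $(X_1X_2)(X_1X_2)^\ast$ as a product of two independent Hermitian matrices and then apply the first display once. The paper instead expands the expectation into the triple convolution $\mathbb{E}[\det(xI-X_1)]\boxtimes_n\mathbb{E}[\det(xI-X_2X_2^\ast)]\boxtimes_n\mathbb{E}[\det(xI-X_1^\ast)]$, rearranges via associativity and commutativity of $\boxtimes_n$, and then applies the first display once more to fold $\mathbb{E}[\det(xI-X_1)]\boxtimes_n\mathbb{E}[\det(xI-X_1^\ast)]$ back into $\mathbb{E}[\det(xI-X_1X_1^\ast)]$. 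Your approach has the advantage of only ever invoking the first display for a pair of \emph{independent} matrices; the paper's intermediate steps involve $X_1$ and $X_1^\ast$, which are not independent, so the first display as stated does not literally apply there. One small correction to your write-up: the pointwise identity you actually obtain is $\det(xI-(X_1X_2)(X_1X_2)^\ast)=\det\bigl(xI-(X_1^\ast X_1)(X_2X_2^\ast)\bigr)$, with $X_1^\ast X_1$ rather than $X_1X_1^\ast$; cyclic permutation cannot swap these inside the product with $X_2X_2^\ast$. Apply the first display with $\tilde Y_1=X_1^\ast X_1$, and only afterwards use the (pointwise) equality $\det(xI-X_1^\ast X_1)=\det(xI-X_1X_1^\ast)$ to put the answer in the stated form.
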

\begin{proof}
    Since $X_1$ and $X_2$ are independent random matrices, the expectation on the left hand side is given by
    $$\mathbb{E}[\det(xI_n-X_1X_2)] = \iint \det(xI_n-X_1X_2) d\nu_1(X_1)d\nu_2(X_2). $$
    It was shown in \cite[Thm. 1.5]{MSS:fin_free_conv} that for normal matrices $X_1$ and $X_2$, one has
    $$\int_{U(n)} \det(xI_n-X_1QX_2Q^\ast) d\nu_{\text{Haar}}(Q) = \det(xI_n-X_1) \boxtimes_n \det(xI_n-X_2).$$
    Since at least one of the underlying matrix ensembles is invariant under unitary conjugation, we thus obtain
    $$\mathbb{E}[\det(xI_n-X_1X_2)] = \iint\det(xI_n-X_1) \boxtimes_n \det(xI_n-X_2) d\nu_1(X_1)d\nu_2(X_2). $$
    Now we can use bilinearity of the finite free multiplicative convolution to obtain the desired result.
    For the second claim, we can assume without loss of generality that the ensemble associated to $X_1$ is invariant under unitary conjugation, otherwise we can swap the order of $X_1$ and $X_2$ via the identity
        $$\mathbb{E}[\det(xI_n-(X_1X_2)(X_1X_2)^\ast)] = \mathbb{E}[\det(xI_n-(X_1X_2)^\ast(X_1X_2))].$$
    In that case, we can use the first result to write the expectation on the left as $$\mathbb{E}[\det(xI_n-X_1)] \boxtimes_n (\mathbb{E}[\det(xI_n-X_2X_2^\ast)] \boxtimes_n \mathbb{E}[\det(xI_n-X_1^\ast)]). $$
    After using the fact that the finite free multiplicative convolution is associative and commutative, this can be written as
    $$(\mathbb{E}[\det(xI_n-X_1)] \boxtimes_n \mathbb{E}[\det(xI_n-X_1^\ast)]) \boxtimes_n \mathbb{E}[\det(xI_n-X_2X_2^\ast)], $$
    We can then apply the first result again.
\end{proof}

It turns out that for certain random matrices, similar decompositions also occur on a deeper level. This is straightforward to see from the results in \cite{KK:SV_EV,KK:mult_conv} after making the proper identifications. Given an $n\times n$ random matrix $X$, we will use the notation $\text{SSV}(X)\sim\text{PE}[n]$, resp. $\text{SSV}(X)\sim\text{PE}_{\text{MDT}}[n]$, to denote that the squared singular values of $X$ follow a polynomial ensemble, resp. polynomial ensemble of multiplicative derivative type.

\begin{prop} \label{MC_BS}
Let $X_1$ and $X_2$ be independent random matrices with $\text{SSV}(X_1)\sim\text{PE}[n]$ and $\text{SSV}(X_2)\sim\text{PE}_{\text{MDT}}[n;\om]$. Let $(p_{k,j},q_{k,j})_{j=0}^{n-1}$ be the biorthogonal system for the correlation kernel of $\text{SSV}(X_k)$. Then the biorthogonal system $(p_k,q_k)_{j=0}^{n-1}$ for the correlation kernel of $\text{SSV}(X_1X_2)$ is given by
    $$ p_j = p_{1,j} \boxtimes_{j} p_{2,j},\quad q_j = q_{1,j} \ast_{\MT{}} \om,\quad j\in\{0,\dots,n-1\}. $$
\end{prop}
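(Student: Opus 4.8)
The plan is to identify $\text{SSV}(X_1X_2)$ explicitly as a polynomial ensemble and then exhibit its biorthogonal system, exploiting that the Mellin transform linearises the multiplicative derivative type structure carried by $X_2$. The first step is to invoke the transformation rule for products of random matrices from polynomial ensembles: since $\text{SSV}(X_2)\sim\text{PE}_{\text{MDT}}[n;\om]$, the results of \cite{KS:gin_prod,KK:SV_EV} give that, writing $\text{SSV}(X_1)\sim\text{PE}[v_1,\dots,v_n]$, one has $\text{SSV}(X_1X_2)\sim\text{PE}[v_1\ast_{\MT{}}\om,\dots,v_n\ast_{\MT{}}\om]$, i.e.\ the weights of the first ensemble are simply Mellin-convolved with $\om$. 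It then remains to compute the biorthogonal functions of this ensemble and to match them with $p_{1,j}\boxtimes_j p_{2,j}$ and $q_{1,j}\ast_{\MT{}}\om$.

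For the ``out'' functions this is essentially immediate. Recall that $q_{1,j}$ lies in $\spn{v_1,\dots,v_{j+1}}$ and is characterised, up to normalisation, by the moment conditions $\MT{q_{1,j}}(m+1)=0$ for $m=0,\dots,j-1$. Using $\MT{[f\ast_{\MT{}}g]}=\MT{f}\cdot\MT{g}$, the function $q_j:=q_{1,j}\ast_{\MT{}}\om$ then lies in $\spn{v_1\ast_{\MT{}}\om,\dots,v_{j+1}\ast_{\MT{}}\om}$ and still satisfies $\MT{q_j}(m+1)=0$ for $m=0,\dots,j-1$; so $q_j$ has exactly the structural form required of the $(j+1)$-st out-function of $\text{SSV}(X_1X_2)$.

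For the ``in'' polynomials, the key input is that the weights of $X_2$ obey $\MT{[(x\tfrac{d}{dx})^{l}\om]}(s)=(-s)^{l}\MT{\om}(s)$, which follows from Proposition~\ref{MT_DIFF} applied to $D=-x\tfrac{d}{dx}$. Hence the monic degree-$j$ biorthogonal polynomial $p_{2,j}$ of $\text{SSV}(X_2)$, which is orthogonal to $\om,(x\tfrac{d}{dx})\om,\dots,(x\tfrac{d}{dx})^{j-1}\om$, has coefficients $p_{2,j}[m]$ satisfying $\sum_{m=0}^{j}p_{2,j}[m]\,(m+1)^{l}\,\MT{\om}(m+1)=0$ for $l=0,\dots,j-1$; this linear system has a one-dimensional solution space (its coefficient array is Vandermonde-type in the distinct nodes $1,\dots,j+1$), and a finite-difference computation identifies the solution as $p_{2,j}[m]=c_j\,(-1)^{j-m}\binom{j}{m}/\MT{\om}(m+1)$ for a normalisation constant $c_j$. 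I would then expand $p_{1,i}\boxtimes_i p_{2,i}$ into monomials, so that by the definition of $\boxtimes_i$ the pairing $\int_0^\infty(p_{1,i}\boxtimes_i p_{2,i})(x)\,(q_{1,j}\ast_{\MT{}}\om)(x)\,dx$ becomes $\sum_{m=0}^{i}\tfrac{p_{1,i}[m]p_{2,i}[m]}{(-1)^{i-m}\binom{i}{m}}\,\MT{q_{1,j}}(m+1)\,\MT{\om}(m+1)$; substituting the formula for $p_{2,i}[m]$, the factors $(-1)^{i-m}\binom{i}{m}$ and $\MT{\om}(m+1)$ cancel, leaving $c_i\sum_{m}p_{1,i}[m]\,\MT{q_{1,j}}(m+1)=c_i\int_0^\infty p_{1,i}\,q_{1,j}=c_i\,\delta_{ij}$, which is the required biorthogonality. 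Since $p_{1,j}\boxtimes_j p_{2,j}$ is a polynomial of degree $j$ and $q_{1,j}\ast_{\MT{}}\om$ has the span and vanishing just found, uniqueness of the biorthogonal system for the correlation kernel of $\text{SSV}(X_1X_2)$ then forces the two claimed identities.

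The step I expect to be the main obstacle is the normalisation bookkeeping: one has to track how the constants $c_j$ (equal to $\MT{\om}(j+1)$ in the monic normalisation) and the norms $\int p_{k,j}\,q_{k,j}$ propagate through $\boxtimes_j$ and $\ast_{\MT{}}\om$, so that the identity holds on the nose rather than up to a $j$-dependent scalar, and so that it matches the convention for ``the biorthogonal system'' fixed elsewhere. The genuinely algebraic content reduces to the two identities $\MT{[(x\tfrac{d}{dx})^{l}\om]}=(-s)^{l}\MT{\om}$ and $\MT{[f\ast_{\MT{}}g]}=\MT{f}\,\MT{g}$; and, as already noted before the statement, the proposition may alternatively be obtained by quoting the transformation formulas of \cite{KK:SV_EV,KK:mult_conv} directly, once one checks the dictionary ``$\text{SSV}\sim\text{PE}$'' $\leftrightarrow$ polynomial ensemble, ``$\text{SSV}\sim\text{PE}_{\text{MDT}}$'' $\leftrightarrow$ Pólya ensemble, $\boxtimes_j\leftrightarrow$ their multiplicative convolution, $\ast_{\MT{}}\om\leftrightarrow$ their weight transform.
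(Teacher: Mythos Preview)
Your argument is correct in substance, but it takes a different route from the paper. The paper's proof is a two-line citation check: it quotes \cite[Cor.~3.7]{KK:mult_conv} for the explicit biorthogonal system $(p_j,q_j)$ of $\text{SSV}(X_1X_2)$, namely $p_j(x)=\sum_{k}p_{1,j}[k]\,\frac{\MT{\om}(j+1)}{\MT{\om}(k+1)}x^k$ and $q_j=q_{1,j}\ast_{\MT{}}\om$, then quotes \cite[Lem.~4.2]{KK:SV_EV} for the explicit form $p_{2,j}(x)=\sum_{k}(-1)^{j-k}\binom{j}{k}\frac{\MT{\om}(j+1)}{\MT{\om}(k+1)}x^k$, and simply plugs both into the definition of $\boxtimes_j$ to verify the equality on the nose. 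You instead rederive the formula for $p_{2,j}$ from the Mellin characterisation of the derivative-type weights and then verify biorthogonality of the proposed pair directly, invoking uniqueness at the end. Your approach is more self-contained (it does not need the product-ensemble formula from \cite{KK:mult_conv}) and makes the mechanism transparent, at the cost of having to track the constants $c_j=\MT{\om}(j+1)$ through the computation---exactly the normalisation bookkeeping you flag. The paper's route avoids that issue entirely because both cited formulas already come in the monic convention, so the $\boxtimes_j$ identity holds without any residual scalar. You in fact anticipate this alternative in your final paragraph; that is precisely what the paper does.
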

\begin{proof}
Denote $p_{1,j}(x) = \sum_{k=0}^{j} p_{1,j}[k] x^k$. It was shown in \cite[Cor. 3.7]{KK:mult_conv} that 
$$ p_j(x) = \sum_{k=0}^j p_{1,j}[k] \frac{(\mathcal{M}\om)(j+1)}{(\mathcal{M}\om)(k+1)} x^k ,\quad q_{j} = q_{1,j} \ast_{\MT{}} \om_n. $$
On the other hand, according to \cite[Lem. 4.2]{KK:SV_EV}, one has
$$ p_{2,j}(x) = \sum_{k=0}^j (-1)^{j-k} \binom{j}{k} \frac{(\mathcal{M}\om)(j+1)}{(\mathcal{M}\om)(k+1)} x^k. $$
It then remains to use the definition of $\boxtimes_{j}$.
\end{proof}

We will now provide a partial answer to the previously mentioned open problem. Let $X_1$ and $X_2$ be independent $n\times n$ normal random matrices for which 
    $$ p_{j,n}(x) = \mathbb{E}[\det(xI_n-X_jX_j^\ast)],\quad j\in\{1,2\}. $$
In order to encode the orthogonality conditions through the random matrices, it is natural to assume that $\text{SSV}(X_1)\sim \text{PE}[n;q_{1,0},\dots,q_{1,n-1}]$ and $\text{SSV}(X_2)\sim \text{PE}[n;q_{2,0},\dots,q_{2,n-1}]$. Indeed, in that case, we automatically have
    $$\int_0^\infty p_{j,n}(x) q_{j,k}(x) dx = 0,\quad k\in\{0,\dots,n-1\},\quad j\in\{1,2\}.$$
With this set-up, Proposition \ref{MC_AP} allows us to describe the finite free multiplicative convolution of $p_{1,n}$ and $p_{2,n}$ explicitly:
    $$ (p_{1,n}\boxtimes_n p_{2,n})(x) = \mathbb{E}[\det(xI_n-(X_1X_2)(X_1X_2)^\ast)].  $$
Again, if we would like $p_{1,n}\boxtimes_n p_{2,n}$ to satisfy 
    $$\int_0^\infty (p_{1,n}\boxtimes_n p_{2,n})(x) q_k(x) dx = 0,\quad k\in\{0,\dots,n-1\},$$
it is natural to demand that $\text{SSV}(X_1X_2)\sim \text{PE}[n;q_0,\dots,q_{n-1}]$. The problem is then to determine when exactly this occurs. This is exactly the problem that drove the research in random matrix theory surrounding products of random matrices that ultimately led to the notion of multiplicative derivative type. We now know that whenever one of the initial polynomial ensembles is of multiplicative derivative, the squared singular values of the product also follow a polynomial ensemble. The associated functions are known explicitly, see \cite[Cor. 3.7]{KK:mult_conv} (or Proposition \ref{MC_BS} above): if $q_{2,0},\dots,q_{2,n-1}$ are of multiplicative derivative type w.r.t. $\omega$, then $q_k = q_{1,k} \ast_{\MT{}} \om$ for $k\in\{0,\dots,n-1\}$. 
\medbreak

In the more restrictive setting of the problem in which the polynomials $p_{1,n}$ and $p_{2,n}$ are assumed to be multiple orthogonal polynomials, one of the initial polynomial ensembles ends up being a multiple orthogonal polynomial ensemble of multiplicative derivative type. These are exactly the objects that we characterized in Theorem \ref{MOPE_MDT}. Following the proof of \cite[Lem. 4.2]{KK:SV_EV}, we can determine the associated multiple orthogonal polynomials explicitly.

\begin{prop} \label{MDT_MOP}  
     Let $w_1,\dots,w_r\in L^1_{\MT{},\Sigma}(\R_{>0})$ with $\Z_{\geq 1}\subset \Sigma$. If, for all $m\in\{1,\dots,n\}$, the functions in $\mathcal{W}(m;w_1,\dots,w_r)$ are of multiplicative derivative type, then the type II polynomial w.r.t. $(w_1,\dots,w_r)$ for the multi-index $\vec{n}\in\mathcal{S}^r$ with $\sz{n}=n$ is given by
    \begin{equation} \label{MDT_MOP_FORM}
        P_{\vec{n}}(x) = \sum_{k=0}^{\sz{n}} (-1)^{\sz{n}-k} \binom{\sz{n}}{k} \frac{(\MT{\om_{1,n}})(\sz{n}+1)}{(\MT{\om_{1,n}})(k+1)} x^k,
    \end{equation}
    where $\MT{\om_{1,n}}$ is as in \eqref{MOPE_MDT_om}.
\end{prop}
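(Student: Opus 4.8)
The plan is to verify directly that the polynomial $Q$ on the right-hand side of \eqref{MDT_MOP_FORM} is monic of degree $n=\sz{n}$ and is orthogonal, against Lebesgue measure on $\R_{>0}$, to every element of $\mathcal{W}(n;w_1,\dots,w_r)$. Since the type II polynomial $P_{\vec n}$ is the unique polynomial with these two properties (the derivative type hypothesis makes the step-line indices up to $\vec n$ normal, so that $P_{\vec n}$ exists and is unique -- see the last paragraph), this identifies $Q=P_{\vec n}$. Structurally this is the multiple-weight analogue of the computation in the proof of \cite[Lem. 4.2]{KK:SV_EV}; the only new input is that, by hypothesis, the orthogonality space of $P_{\vec n}$ is the span of the iterated operators $x\frac{d}{dx}$ applied to $\om_{1,n}$.

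First I would pass to a convenient description of the orthogonality space. Writing $D$ for the operator $(Dg)(x)=-xg'(x)$ from Proposition~\ref{MT_DIFF}, the case $m=n$ of the hypothesis gives $\mathcal{W}(n;w_1,\dots,w_r)=\spn{D^{j}\om_{1,n}\mid j=0,\dots,n-1}$, since $x\frac{d}{dx}$ and $D$ differ only by a sign. In particular $\om_{1,n}\in\mathcal{W}(n;w_1,\dots,w_r)$, and since $\Z_{\geq 1}\subset\Sigma$ and $w_1,\dots,w_r\in L^1_{\MT{},\Sigma}(\R_{>0})$, each generator $x^{k-1}w_j$ of $\mathcal{W}(n;w_1,\dots,w_r)$, and hence $\om_{1,n}$ and all the $D^{j}\om_{1,n}$ with $0\le j\le n-1$, lies in $L^1_{\MT{},\Sigma}(\R_{>0})$. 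This is exactly what is needed to apply Proposition~\ref{MT_DIFF} with $F=\om_{1,n}$: its regularity requirements ($\om_{1,n}\in C^{n-2}(\R_{>0})$ with $\om_{1,n}^{(n-2)}\in AC_{\text{loc}}(\R_{>0})$, and $D^{n-1}\om_{1,n}\in L^1_{\MT{},\Sigma}(\R_{>0})$) come from Definition~\ref{MDT_def} and the previous sentence -- this is precisely why that proposition was set up under minimal hypotheses. It yields $\MT{D^{j}\om_{1,n}}(s)=s^{j}\MT{\om_{1,n}}(s)$ on $\Sigma+i\R$ and, at $s=k+1\in\Z_{\geq 1}$, the moment identity $\int_0^\infty x^{k}(D^{j}\om_{1,n})(x)\,dx=(k+1)^{j}\MT{\om_{1,n}}(k+1)$ for $k\geq 0$ and $0\le j\le n-1$.

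The verification is then routine. Substituting $Q$ into the pairing with $D^{j}\om_{1,n}$ and using the moment identity, the factor $\MT{\om_{1,n}}(k+1)$ in the denominator of \eqref{MDT_MOP_FORM} cancels, leaving $\int_0^\infty Q(x)(D^{j}\om_{1,n})(x)\,dx=\MT{\om_{1,n}}(n+1)\sum_{k=0}^{n}(-1)^{n-k}\binom{n}{k}(k+1)^{j}$; the remaining sum is the $n$-th forward difference at $0$ of the polynomial $x\mapsto(x+1)^{j}$ and therefore vanishes, since $j\leq n-1<n$. Hence $Q$ is orthogonal to $\spn{D^{j}\om_{1,n}\mid j=0,\dots,n-1}=\mathcal{W}(n;w_1,\dots,w_r)$, i.e. $\int_0^\infty Q(x)x^{k-1}w_j(x)\,dx=0$ for $k=1,\dots,n_j$ and $j=1,\dots,r$. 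Finally, the coefficient of $x^{n}$ in $Q$ is $\binom{n}{n}\MT{\om_{1,n}}(n+1)/\MT{\om_{1,n}}(n+1)=1$, so $Q$ is monic of degree $n$; together with the orthogonality and uniqueness of the type II polynomial, $Q=P_{\vec n}$.

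The step that really needs care -- as opposed to the arithmetic above -- is the well-definedness of \eqref{MDT_MOP_FORM} and the existence and uniqueness of $P_{\vec n}$, which both hinge on $\MT{\om_{1,n}}(k+1)\neq 0$ for $k=0,\dots,n$. The cases $k\le n-1$ amount to normality of $\vec n$: taking $D^{j-1}\om_{1,n}$, $j=1,\dots,n$, as a basis of $\mathcal{W}(n;w_1,\dots,w_r)$, the moment matrix $[\,i^{j-1}\MT{\om_{1,n}}(i)\,]_{i,j=1}^{n}$ has determinant $\bigl(\prod_{i=1}^{n}\MT{\om_{1,n}}(i)\bigr)\prod_{1\le i<j\le n}(j-i)$, and similarly for the smaller step-line indices. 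I would record before running the argument that the non-vanishing holds here: by the characterization (Theorem~\ref{MOPE_MDT}) the Mellin transform $\MT{\om_{1,n}}$ has the explicit form \eqref{MOPE_MDT_om}, which -- invoking $\text{Re}\,a_i>-1$ from Proposition~\ref{MDT_COND} and the standing parameter constraints -- is a ratio of products of Gamma functions that is finite and nonzero at $s=1,\dots,n+1$, covering in particular the extra value $s=n+1$ needed for the leading coefficient of $Q$.
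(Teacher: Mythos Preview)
Your proof is correct and follows essentially the same route as the paper: reduce the orthogonality conditions to pairing against $D^{j}\om_{1,n}$ via the derivative type hypothesis, then use $\MT{D^{j}\om_{1,n}}(s)=s^{j}\MT{\om_{1,n}}(s)$ from Proposition~\ref{MT_DIFF} so that the denominators $\MT{\om_{1,n}}(k+1)$ cancel. The only cosmetic difference is in the final vanishing step: the paper packages $P_{\vec n}$ as a contour integral $(-1)^n n!\int_{\mathcal{C}} \frac{\MT{\om_{1,n}}(n+1)}{\MT{\om_{1,n}}(t+1)}\frac{x^t}{(-t)_{n+1}}\frac{dt}{2\pi i}$ and blows up the contour to kill $\int_{\mathcal{C}}(t+1)^{j}/(-t)_{n+1}\,dt$, whereas you compute the same quantity directly as $\sum_{k=0}^{n}(-1)^{n-k}\binom{n}{k}(k+1)^{j}=\Delta^{n}[(x+1)^{j}]\big|_{x=0}=0$, which is a bit more elementary and avoids the need for non-vanishing of $\MT{\om_{1,n}}$ on the full strip.
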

\begin{proof}
    Note that $P_{\vec{n}}$ is a monic polynomial of degree $\sz{n}$. We have to show that it satisfies the orthogonality conditions
        $$ \int_0^\infty P_{\vec{n}}(x) x^k w_j(x) dx = 0,\quad k\in\{0,\dots,n_j-1\},\quad j\in\{1,\dots,r\}. $$
    By Theorem \ref{MOPE_MDT_CORE_MOM}, this is equivalent to showing that
        $$ \int_0^\infty P_{\vec{n}}(x) (D^{k}\om_{1,n})(x) dx = 0,\quad k\in\{0,\dots,n-1\}, $$
    where $(Df)(x)=-xf'(x)$ and $n=\sz{n}$. To do so, we will write
        $$ P_{\sz{n}}(x) = (-1)^n n! \int_{\mathcal{C}} \frac{\MT{\om_{1,n}}(n+1)}{\MT{\om_{1,n}}(t+1)}  \frac{x^t}{(-t)_{n+1}} \frac{dt}{2\pi i},  $$
    in terms of a counterclockwise contour $\mathcal{C}$ in $\Sigma+i\R$ enclosing $[0,n]$ once. We can do this because $\MT{\om_{1,n}}(t)\neq 0$ for $\text{Re}(t)\in\Sigma$ and $\Z_{\geq 1}\subset\Sigma$. After interchanging the integrals, we obtain
    $$\int_0^\infty P_n(x) (D^{k}\om_{1,n})(x) dx = (-1)^n n! \int_{\mathcal{C}} \frac{\MT{\om_{1,n}}(n+1)}{\MT{\om_{1,n}}(t+1)}  \frac{(\MT{D^k\om_{1,n}})(t+1)}{(-t)_{n+1}} \frac{dt}{2\pi i}. $$
    By Proposition \ref{MT_DIFF}, we have $(\MT{D^k\om_{1,n}})(t)=t^k \MT{\om_{1,n}}(t)$ for $t\in\Sigma+i\R$ and thus
    $$\int_0^\infty P_{\vec{n}}(x) (D^{k}\om_{1,n})(x) dx = (-1)^n n! (\mathcal{M}\om_{1,n})(n+1) \int_{\mathcal{C}} \frac{(t+1)^k}{(-t)_{n+1}} \frac{dt}{2\pi i}.$$
    Since $k\in\{0,\dots,n-1\}$, the integrand is $O(|t|^{-2})$ as $|t|\,\to\infty$, and thus by blowing up the contour, we can show that the integral vanishes.   
\end{proof}

The polynomials in \eqref{MDT_MOP_FORM} have some nice (de)composition properties in terms of the finite free multiplicative convolution: if we denote them by $P_{\vec{n}}(x;\om_{1,n})$, then we have $P_{\vec{n}}(x;\om_{1,n}) = P_{\vec{n}}(x;\om_{1,n}^1) \boxtimes_{\sz{n}} P_{\vec{n}}(x;\om_{1,n}^2)$ whenever $\om_{1,n}=\om_{1,n}^1\ast_{\MT{}}\om_{1,n}^2$. The systems of weights to which Proposition \ref{MDT_MOP} applies are described in \eqref{MOPE_MDT_w}.

\subsection{Additive setting}

We start by recalling the notion of finite free additive convolution, see \cite[Def.~1.1]{MSS:fin_free_conv}.

\begin{defi}
    The finite free additive convolution of two polynomials $p_1$ and $p_2$ of degree at most $n$ is defined as
        $$ (p_1 \boxplus_n p_2)(x) = \frac{1}{n!} \sum_{k=0}^n p_1^{(k)}(x) p_2^{(n-k)}(0) . $$
\end{defi}

The second part of the open problem involves orthogonality of the finite free additive convolution of two polynomials that satisfy given orthogonality relations. More precisely, one consider polynomials $p_{1,n}$ and $p_{2,n}$ of degree $n$ that satisfy
    $$\int_{-\infty}^\infty p_{j,n}(x) q_{j,k}(x) dx = 0,\quad k=0,\dots,n-1,$$
for some functions $(q_{1,k})_{k=0}^{n-1}$ and $(q_{2,k})_{k=0}^{n-1}$ and then asks whether their finite free additive convolution also satisfies
    $$\int_{-\infty}^\infty (p_{1,n}\boxplus_n p_{2,n})(x) q_k(x) dx = 0,\quad k\in\{0,\dots,n-1\},$$
for some explicit functions $(q_k)_{k=0}^{n-1}$.
\medbreak

In this setting, we will interpret the problem in terms of sum of random matrices instead of products. To this end, we will make use of the result below, which is essentially a reformulation of \cite[Thm. 1.2]{MSS:fin_free_conv} in terms of random matrices instead of deterministic ones. Observe that it is essentially the finite version of the free convolution law for sums of random matrices, i.e. the fact that, if $\mu_j$ is the limiting eigenvalue distribution of $X_j$, the eigenvalue distribution of $X_1+X_2$ tends to the free additive convolution $\mu_1\boxplus\mu_2$ as $n\to\infty$, whenever one of the underlying matrix ensembles is invariant under unitary conjugation.

\begin{prop} \label{AC_AP}
    Let $X_1$ and $X_2$ be independent $n\times n$ normal random matrices and assume that one of the matrix ensembles is invariant under unitary conjugation. Then, 
    $$\mathbb{E}[\det(xI_n-(X_1+X_2))] = \mathbb{E}[\det(xI_n-X_1)] \boxplus_n \mathbb{E}[\det(xI_n-X_2)].$$
\end{prop}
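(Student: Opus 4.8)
The plan is to transcribe the proof of Proposition~\ref{MC_AP} almost verbatim, replacing the multiplicative input \cite[Thm. 1.5]{MSS:fin_free_conv} by its additive analogue \cite[Thm. 1.2]{MSS:fin_free_conv}, which asserts that for two $n\times n$ normal matrices $X_1$ and $X_2$ one has
$$\int_{U(n)} \det\!\big(xI_n - (X_1 + QX_2Q^\ast)\big)\, d\nu_{\text{Haar}}(Q) = \det(xI_n - X_1) \boxplus_n \det(xI_n - X_2).$$
First I would use independence of $X_1$ and $X_2$ to write the left-hand side of the claimed identity as a double integral $\iint \det(xI_n - (X_1+X_2))\, d\nu_1(X_1)\, d\nu_2(X_2)$ against the two matrix distributions $\nu_1,\nu_2$.

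Next, since $X_1+X_2 = X_2+X_1$, I may assume without loss of generality that the ensemble $\nu_2$ associated with $X_2$ is invariant under unitary conjugation. Invariance means $\int f(X_2)\,d\nu_2(X_2) = \int f(QX_2Q^\ast)\,d\nu_2(X_2)$ for every fixed $Q\in U(n)$; averaging this identity against the Haar measure in $Q$ and applying Fubini's theorem gives, for each fixed $X_1$,
$$\int \det\!\big(xI_n - (X_1+X_2)\big)\, d\nu_2(X_2) = \int \left[ \int_{U(n)} \det\!\big(xI_n - (X_1+QX_2Q^\ast)\big)\, d\nu_{\text{Haar}}(Q) \right] d\nu_2(X_2).$$
Plugging \cite[Thm. 1.2]{MSS:fin_free_conv} into the bracketed inner integral turns the right-hand side into $\int \big(\det(xI_n-X_1)\boxplus_n\det(xI_n-X_2)\big)\, d\nu_2(X_2)$, and then integrating over $X_1$ against $\nu_1$ yields
$$\mathbb{E}[\det(xI_n-(X_1+X_2))] = \iint \big(\det(xI_n-X_1)\boxplus_n\det(xI_n-X_2)\big)\, d\nu_1(X_1)\, d\nu_2(X_2).$$
Finally, bilinearity of $\boxplus_n$ — immediate from the defining formula $\tfrac{1}{n!}\sum_{k=0}^n p_1^{(k)}(x)\,p_2^{(n-k)}(0)$, which is linear in $p_1$ through the derivative and linear in $p_2$ through the evaluation at $0$ — allows me to pull both integrals inside the convolution and obtain $\mathbb{E}[\det(xI_n-X_1)]\boxplus_n\mathbb{E}[\det(xI_n-X_2)]$.

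The only genuinely delicate point is the measure-theoretic bookkeeping: justifying the interchanges of the $\nu_1$, $\nu_2$ and Haar integrals and the fact that bilinearity of $\boxplus_n$ commutes with integration against probability measures. Both are routine once one works coefficientwise in $x$, since each coefficient of $\det(xI_n-(X_1+QX_2Q^\ast))$ is a continuous, hence locally bounded, polynomial in the entries of $X_1$, $X_2$ and $Q$, and $U(n)$ is compact; so Fubini and linearity-under-the-integral apply with no integrability obstruction. Consequently I do not expect any substantive obstacle beyond this formal verification, the argument being a direct adaptation of the proof of Proposition~\ref{MC_AP}.
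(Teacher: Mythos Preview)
Your proposal is correct and follows exactly the approach the paper takes: the paper's own proof simply states that the argument is identical to that of Proposition~\ref{MC_AP} with \cite[Thm.~1.5]{MSS:fin_free_conv} replaced by its additive analogue \cite[Thm.~1.2]{MSS:fin_free_conv}. You have in fact written out more detail (the explicit Haar-averaging step and the measure-theoretic justification) than the paper does.
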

\begin{proof}
    The idea of the proof is the same as in the multiplicative setting (Proposition \ref{MC_AP}). The only difference is that we have to use the fact that
    $$\int_{U(n)} \det(xI_n-(X_1+QX_2Q^\ast)) d\nu_{\text{Haar}}(Q) = \det(xI_n-X_1) \boxplus_n \det(xI_n-X_2),$$
    for normal $n\times n$ matrices $X_1$ and $X_2$, see \cite[Thm. 1.2]{MSS:fin_free_conv}.
\end{proof}

Again, for certain random matrices, similar decompositions occur on a deeper level as well. This essentially follows from the results in \cite{Kieburg:add_conv} after making the proper identifications (and using the Laplace transform instead of the Fourier transform).

\begin{prop} \label{AC_BS}
Let $X_1$ and $X_2$ be independent random matrices with $\text{EV}(X_1)\sim\text{PE}[n]$ and $\text{EV}(X_2)\sim\text{PE}_{\text{ADT}}[n;\om]$. Let $(p_{k,j},q_{k,j})_{j=0}^{n-1}$ be the biorthogonal system for the correlation kernel of $\text{EV}(X_k)$. Then the biorthogonal system $(p_{j},q_{j})_{j=0}^{n-1}$ for the correlation kernel of $\text{EV}(X_1+X_2)$ is given by
    $$ p_j = p_{1,j} \boxplus_{j} p_{2,j},\quad q_j = q_{1,j} \ast_{\LT{}} \om,\qquad j=0,\dots,n-1. $$
\end{prop}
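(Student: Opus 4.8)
The plan is to follow the proof of Proposition~\ref{MC_BS} almost verbatim, with the Mellin transform, the Mellin convolution and the finite free multiplicative convolution replaced by their additive counterparts. The additive analogue of \cite[Cor.~3.7]{KK:mult_conv} is contained in \cite{Kieburg:add_conv}: when $\text{EV}(X_1)\sim\text{PE}[n]$ and $\text{EV}(X_2)\sim\text{PE}_{\text{ADT}}[n;\om]$, the eigenvalues of $X_1+X_2$ again follow a polynomial ensemble whose biorthogonal $q$-functions are $q_j=q_{1,j}\ast_{\LT{}}\om$ and whose biorthogonal polynomials $p_j$ are obtained from $p_{1,j}(x)=\sum_{m=0}^j p_{1,j}[m]x^m$ by the fixed degree-preserving linear map $x^m\mapsto p_{2,m}$, i.e. $p_j=\sum_{m=0}^j p_{1,j}[m]\,p_{2,m}$ (equivalently, by a triangular transform with coefficients built from the Taylor coefficients of $1/\LT{\om}(-\cdot)$ at the origin). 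Passing from the Fourier transform used in \cite{Kieburg:add_conv} to the bilateral Laplace transform is harmless, since $\text{EV}(X_2)\sim\text{PE}_{\text{ADT}}[n;\om]$ forces $\om$ to have moments of all orders, $\om^{(k)}\in L^1_{\LT{},\Sigma}(\R)$, and $\LT{\om}$ to be nonvanishing near $\Sigma$ — exactly the hypotheses needed below. So the first step is simply to record this fact.

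The second step is to pin down $p_{2,j}$, the biorthogonal polynomial of $\text{PE}_{\text{ADT}}[n;\om]$, which is the additive analogue of \cite[Lem.~4.2]{KK:SV_EV}. The orthogonality relations are $\int_{\R}p_{2,j}(x)\om^{(k)}(x)\,dx=0$ for $k=0,\dots,j-1$. Using $\om^{(k)}\in L^1_{\LT{},\Sigma}(\R)$ together with Proposition~\ref{LT_DIFF} one gets $\int x^m\om^{(k)}(x)\,dx=(-1)^k\frac{m!}{(m-k)!}\int x^{m-k}\om(x)\,dx$ for $m\ge k$ and $=0$ for $m<k$, the vanishing of the boundary terms being justified exactly as in Proposition~\ref{MDT_MOP} by representing $p_{2,j}$ as a contour integral of $x^t/\LT{\om}(-t)$ and deforming the contour. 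Solving the resulting triangular system yields, up to the normalization of the biorthogonal system, $p_{2,j}(x)=\sum_{k=0}^j(-1)^{j-k}\binom{j}{k}\,(1/\LT{\om})^{(j-k)}(0)\,x^k$; the structural feature that makes the last step work is that $p_{2,j}^{(j-k)}(0)=j!\,\rho_k$ with $\rho_k=\tfrac{(-1)^k}{k!}(1/\LT{\om})^{(k)}(0)$ independent of $j$.

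The third step is the identification $p_j=p_{1,j}\boxplus_j p_{2,j}$. By bilinearity of $\boxplus_j$ it suffices to check $x^m\boxplus_j p_{2,j}=p_{2,m}$ for every $m\le j$. Inserting $p_{2,j}^{(j-k)}(0)=j!\,\rho_k$ into $(x^m\boxplus_j p_{2,j})(x)=\tfrac{1}{j!}\sum_{k}(x^m)^{(k)}(x)\,p_{2,j}^{(j-k)}(0)$, the factor $j!$ cancels and one is left with $\sum_{k}\frac{m!}{(m-k)!}\rho_k\,x^{m-k}$, which is precisely $p_{2,m}$. Together with $q_j=q_{1,j}\ast_{\LT{}}\om$ from the first step, this gives the proposition.

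The main obstacle is the second step: pinning down $p_{2,j}$, and in particular extracting the $j$-independence of $p_{2,j}^{(j-k)}(0)/j!$, requires the orthogonality computation together with the integration-by-parts/contour argument, and one must keep careful track of the normalization (equivalently of $\LT{\om}(0)=\int_\R\om$) so that no spurious scalar survives in $p_j=p_{1,j}\boxplus_j p_{2,j}$. Once $p_{2,j}$ is in hand, the third step is — just as in Proposition~\ref{MC_BS} — essentially bookkeeping with the definition of $\boxplus_j$, the only extra subtlety being that $\boxplus_j$ mixes coefficients whereas $\boxtimes_j$ did not; and the ancillary technical points (validity of the moment identities, nonvanishing of $\LT{\om}$) are covered by the hypothesis $\text{EV}(X_2)\sim\text{PE}_{\text{ADT}}[n;\om]$ and handled as in Proposition~\ref{MDT_MOP}.
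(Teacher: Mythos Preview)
Your approach is correct and essentially the same as the paper's: both extract the formulas for $p_j$ and $q_j$ from \cite{Kieburg:add_conv} and then identify $p_j$ with $p_{1,j}\boxplus_j p_{2,j}$ by an elementary expansion. The paper streamlines your second step by citing \cite[Thm.~III.1]{Kieburg:add_conv} directly for the closed form of $p_{2,j}$, and it starts from the integral representation of $p_j$ in \cite[Thm.~III.8]{Kieburg:add_conv} (simplifying it to $p_j(x)=\sum_{k}\tfrac{(-1)^k}{k!}(1/\LT{\om})^{(k)}(0)\,(p_{1,j})^{(k)}(x)$) rather than invoking the map $x^m\mapsto p_{2,m}$ as already given --- but your monomial check $x^m\boxplus_j p_{2,j}=p_{2,m}$ is just a reorganization of the same comparison.
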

\begin{proof}
According to \cite[Thm. III.8]{Kieburg:add_conv}, one has
    $$ p_j(x) = \frac{1}{(n-1)!} \int_0^\infty e^{-r} \left(r-\frac{d}{dt}\frac{d}{dx}\right)^{n-1}\left[\frac{p_{1,j}(x)}{\LT{\om}(t)}\right]_{t=0} dr,\quad  q_j =  q_{1,j} \ast_{\LT{}} \omega. $$
On the other hand, it was shown in \cite[Thm. III.1]{Kieburg:add_conv} that
    $$ p_{2,j}(x) = \frac{1}{j!}\left(x-\frac{d}{dt}\right)^j\left[\frac{1}{\LT{\om}(t)}\right]_{t=0}. $$
After expanding the $(n-1)$-th power in the integral representation for $p_j(x)$, we obtain
$$\begin{aligned}
    p_j(x) = \sum_{k=0}^{n-1} \frac{1}{k!(n-1-k)!} \left(-\frac{d}{dt}\frac{d}{dx}\right)^{k} \left[\frac{p_{1,j}(x)}{\LT{\om}(t)}\right]_{t=0} \int_0^\infty e^{-r} r^{n-1-k} dr,
\end{aligned}$$
and thus 
    $$ p_j(x) = \sum_{k=0}^{j} \frac{(-1)^k}{k!} \left(\frac{1}{\LT{\om}}\right)^{(k)}(0) (p_{1,j})^{(k)}(x).$$
Now use the fact that     
$$p_{2,j}(x) = \sum_{k=0}^j \frac{(-1)^k}{k!(j-k)!} \left(\frac{1}{\LT{\om}}\right)^{(k)}(0) x^{j-k}, $$
and the definition of $\boxplus_j$.
\end{proof}

Following a similar strategy as in the multiplicative setting, we will now provide a partial answer to the previously mentioned open problem. Let $X_1$ and $X_2$ be independent $n\times n$ normal random matrices for which 
    $$ p_{j,n}(x) = \mathbb{E}[\det(xI_n-X_j)],\quad j\in\{1,2\}. $$
In order to encode the orthogonality conditions through the underlying random matrices, it is again natural to assume that $\text{EV}(X_1)\sim \text{PE}[n;q_{1,0},\dots,q_{1,n-1}]$ and $\text{EV}(X_2)\sim \text{PE}[n;q_{2,0},\dots,q_{2,n-1}]$, because then we automatically have
    $$\int_{-\infty}^\infty p_{j,n}(x) q_{j,k}(x) dx = 0,\quad k\in\{0,\dots,n-1\},\quad j\in\{1,2\}.$$
By Proposition \ref{AC_AP}, the finite free additive convolution of $p_{1,n}$ and $p_{2,n}$ is given by
    $$ (p_{1,n}\boxtimes_n p_{2,n})(x) = \mathbb{E}[\det(xI_n-(X_1+X_2)].  $$
Again, if we would like $p_{1,n}\boxtimes_n p_{2,n}$ to satisfy 
    $$\int_{-\infty}^\infty (p_{1,n}\boxtimes_n p_{2,n})(x) q_k(x) dx = 0,\quad k\in\{0,\dots,n-1\},$$
it is natural to demand that $\text{EV}(X_1+X_2)\sim \text{PE}[n;q_0,\dots,q_{n-1}]$. The problem is then to determine when exactly this occurs, which is exactly the problem that drove the research in random matrix theory surrounding sums of random matrices that ultimately led to the notion of additive derivative type. We now know that whenever one of the initial polynomial ensembles is of additive derivative, the eigenvalues of the sum also follow a polynomial ensemble. The associated functions are known explicitly, see \cite[Thm. III.8]{Kieburg:add_conv} (or Proposition \ref{AC_BS} above): if the functions $q_{2,0},\dots,q_{2,n-1}$ are of additive derivative type w.r.t. $\omega$, then $q_k = q_{1,k} \ast_{\LT{}} \om$ for $k\in\{0,\dots,n-1\}$. 
\medbreak

Whenever the polynomials $p_{1,n}$ and $p_{2,n}$ in the open problem are assumed to be multiple orthogonal polynomials, one of the initial polynomial ensembles ends up being a multiple orthogonal polynomial ensemble of additive derivative type. These are exactly the objects that we characterized in Proposition \ref{MOPE_ADT}. In the following result, we will use some ideas in the proof of \cite[Thm. III.1]{Kieburg:add_conv} to determine the associated multiple orthogonal polynomials explicitly. Note that the assumption that all $\LT{w_j}$ are analytic at the origin is the analogue of the condition $\Z_{\geq 1}\subset\Sigma$ in the multiplicative setting (see Proposition \ref{MDT_MOP}) as it ensures that all the moments of $w_j$ exist.

\begin{prop} \label{ADT_MOP}  
     Let $w_1,\dots,w_r\in L^1_{\LT{},\Sigma}(\R)$ and suppose that the $\LT{w_j}$ are analytic at the origin. If, for all $m\in\{1,\dots,n\}$, the functions in $\mathcal{W}(n;w_1,\dots,w_r)$ are of additive derivative type, then the type II polynomial w.r.t. $(w_1,\dots,w_r)$ for the multi-index $\vec{n}\in\mathcal{S}^r$ with $\sz{n}=n$ is given by
    \begin{equation} \label{ADT_MOP_FORM}
        P_{\vec{n}}(x) = \LT{\om_{2,n}}(0) \sum_{k=0}^{\sz{n}} (-1)^{\sz{n}-k} \binom{\sz{n}}{k} \left(\frac{1}{\LT{\om_{2,n}}}\right)^{(\sz{n}-k)}(0)\, x^k,
    \end{equation}
    where $\LT{\om_{2,n}}$ is as in \eqref{MOPE_ADT_om}.
\end{prop}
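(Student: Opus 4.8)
The plan is to check directly that the polynomial $P_{\vec n}$ in \eqref{ADT_MOP_FORM} is monic of degree $\sz{n}$ and satisfies the orthogonality conditions defining the type II multiple orthogonal polynomial, following the same strategy as the proof of Proposition~\ref{MDT_MOP} and mirroring the ideas in the proof of \cite[Thm. III.1]{Kieburg:add_conv}, with the Mellin transform replaced throughout by the Laplace transform.

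First I would set up the analytic framework. Since $\om_{2,n}=\om_{2,n}^{(0)}$ lies in $\mathcal{W}(n;w_1,\dots,w_r)=\spn{x^{k-1}w_j}$, it is a finite linear combination of the $x^{k-1}w_j$, so $\LT{\om_{2,n}}$ inherits analyticity at $0$ from the $\LT{w_j}$; the explicit form \eqref{MOPE_ADT_om} moreover shows $\LT{\om_{2,n}}(0)\ne 0$. Hence the Taylor coefficients of $1/\LT{\om_{2,n}}$ at $0$ occurring in \eqref{ADT_MOP_FORM} are well defined, and the leading ($k=\sz{n}$) coefficient of $P_{\vec n}$ equals $\LT{\om_{2,n}}(0)\cdot(1/\LT{\om_{2,n}})(0)=1$, so $P_{\vec n}$ is monic of degree $\sz{n}$. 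It then remains to prove
$$\int_{-\infty}^{\infty} P_{\vec n}(x)\,x^k w_j(x)\,dx = 0,\qquad k\in\{0,\dots,n_j-1\},\ j\in\{1,\dots,r\};$$
a monic polynomial of degree $\sz{n}$ with this property is the (unique) type II polynomial.

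Because the $\sz{n}$ functions $\{x^k w_j\}$ span $\mathcal{W}(n;w_1,\dots,w_r)$, which by the additive-derivative-type hypothesis (Definition~\ref{ADT_def}) also equals $\spn{\om_{2,n}^{(l)}\mid l=0,\dots,\sz{n}-1}$, the above system of $\sz{n}$ conditions is equivalent to
$$\int_{-\infty}^{\infty} P_{\vec n}(x)\,\om_{2,n}^{(l)}(x)\,dx=0,\qquad l=0,\dots,\sz{n}-1$$
(the additive counterpart of the moment reformulation invoked through Theorem~\ref{MOPE_MDT_CORE_MOM} in the multiplicative case). To evaluate these I would rewrite \eqref{ADT_MOP_FORM}, using that multiplication by $x$ commutes with $d/dt$, in the operator form
$$P_{\vec n}(x) = \LT{\om_{2,n}}(0)\,\Big(x-\frac{d}{dt}\Big)^{\sz{n}}\Big[\frac{1}{\LT{\om_{2,n}}(t)}\Big]_{t=0},$$
and combine it with the moment identity $\int x^j g(x)\,dx = (-1)^j(\LT{g})^{(j)}(0)$ and Proposition~\ref{LT_DIFF}, which gives $\LT{[\om_{2,n}^{(l)}]}(s)=s^l\LT{\om_{2,n}}(s)$ and hence, by the Leibniz rule, $\int x^j\om_{2,n}^{(l)}(x)\,dx=(-1)^j\frac{j!}{(j-l)!}(\LT{\om_{2,n}})^{(j-l)}(0)$ for $j\ge l$ and $0$ for $j<l$. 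Substituting these moments into $\int P_{\vec n}\,\om_{2,n}^{(l)}$, re-indexing the double sum by $i=j-l$, and applying the Leibniz rule once more collapses the expression, up to a non-zero constant, to
$$\sum_{i=0}^{\sz{n}-l}\binom{\sz{n}-l}{i}(\LT{\om_{2,n}})^{(i)}(0)\Big(\frac{1}{\LT{\om_{2,n}}}\Big)^{(\sz{n}-l-i)}(0) = \frac{d^{\,\sz{n}-l}}{ds^{\,\sz{n}-l}}\Big[\LT{\om_{2,n}}(s)\cdot\frac{1}{\LT{\om_{2,n}}(s)}\Big]_{s=0} = \frac{d^{\,\sz{n}-l}}{ds^{\,\sz{n}-l}}[1]_{s=0},$$
which vanishes for every $l\in\{0,\dots,\sz{n}-1\}$. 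Alternatively one could mirror the contour-integral argument of Proposition~\ref{MDT_MOP} almost verbatim: express $P_{\vec n}$ as a loop integral around $0$ of a generating function in $x$ divided by $\LT{\om_{2,n}}(t)\,t^{\sz{n}+1}$, integrate against $\om_{2,n}^{(l)}$ term by term via Proposition~\ref{LT_DIFF}, and observe that the integrand is $O(|t|^{-2})$ at $0$ up to an analytic remainder.

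I expect the main obstacle to be the analytic bookkeeping rather than the combinatorics: one must justify that $\LT{\om_{2,n}}$ is analytic and non-vanishing near $0$ (so that the Taylor coefficients of $1/\LT{\om_{2,n}}$ exist and $P_{\vec n}$ has degree exactly $\sz{n}$), that $\om_{2,n}$ and all its derivatives $\om_{2,n}^{(l)}$, which lie in $\mathcal{W}(n;w_1,\dots,w_r)$, have finite moments up to order $\sz{n}$ so that the moment identity applies, and that the finite sum and the integral may be interchanged; these are consequences of the explicit form \eqref{MOPE_ADT_om} together with the hypothesis that the $\LT{w_j}$ are analytic at the origin. Once $P_{\vec n}$ is written in the operator form above, the combinatorial collapse to $(d/ds)^{\sz{n}-l}[1]_{s=0}$ is essentially forced.
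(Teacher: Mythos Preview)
Your proposal is correct and follows essentially the same strategy as the paper: show monicity, reduce the multiple orthogonality conditions to $\int P_{\vec n}\,\om_{2,n}^{(l)}=0$ via the additive derivative type structure (the paper cites Theorem~\ref{MOPE_ADT_CORE} for this step), and then exploit $\LT{[\om_{2,n}^{(l)}]}(s)=s^l\LT{\om_{2,n}}(s)$ from Proposition~\ref{LT_DIFF}. The only difference is in the final computation: the paper writes
\[
P_{\vec n}(x)=(-1)^n n!\int_{\mathcal C}\frac{\LT{\om_{2,n}}(0)}{\LT{\om_{2,n}}(t)}\,\frac{e^{-tx}}{t^{n+1}}\,\frac{dt}{2\pi i}
\]
around a small contour $\mathcal C$ about $0$, interchanges integrals, and lands on $\int_{\mathcal C}t^{-(n-l+1)}\frac{dt}{2\pi i}=0$; you instead substitute the moment formula directly and collapse the resulting double sum via Leibniz to $\tfrac{d^{\,n-l}}{ds^{\,n-l}}[1]_{s=0}=0$. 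These are two packagings of the same identity (the contour integral is precisely the generating-function encoding of your Leibniz collapse), and indeed you mention the contour route as an alternative. One small correction to your description of that alternative: the vanishing in the additive case does not come from ``$O(|t|^{-2})$'' behavior and blowing up the contour (that was the multiplicative argument around $[0,n]$); here the contour stays around $0$ and the integral vanishes simply because the residue of $t^{-(n-l+1)}$ at $0$ is zero for $l\le n-1$.
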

\begin{proof}
    Note that $P_{\vec{n}}$ is a monic polynomial of degree $\sz{n}$. We have to show that it satisfies the orthogonality conditions
        $$ \int_{-\infty}^\infty P_{\vec{n}}(x) x^k w_j(x) dx = 0,\quad k\in\{0,\dots,n_j-1\},\quad j\in\{1,\dots,r\}, $$
    By Theorem \ref{MOPE_ADT_CORE}, this is equivalent to showing that
        $$ \int_0^\infty P_{\vec{n}}(x) \om_{2,n}^{(k)}(x) dx = 0,\quad k\in\{0,\dots,n-1\}. $$
    To do so, we will write
        $$ P_{\vec{n}}(x) = (-1)^n n! \int_{\mathcal{C}} \frac{\LT{\om_{2,n}}(0)}{\LT{\om_{2,n}}(t)}  \frac{e^{-tx}}{t^{n+1}} \frac{dt}{2\pi i},  $$
    in terms of a counterclockwise contour $\mathcal{C}$ enclosing $0$ once. We can do this because $\LT{\om_{2,n}}$ is analytic at the origin as 
    $$\LT{\om_{2,n}}\in\spn{(\LT{w_j)^{(k-1)}} \mid k=1,\dots,n_j,j=1,\dots,r}.$$ 
    After interchanging the integrals, we obtain
    $$\int_{-\infty}^\infty P_n(x) \om_{2,n}^{(k)}(x) dx = (-1)^n n! \int_{\mathcal{C}} \frac{\LT{\om_{2,n}}(0)}{\LT{\om_{2,n}}(t)}  \frac{(\LT{\om_{2,n}^{(k)})(t)}}{t^{n+1}} \frac{dt}{2\pi i}. $$
    By Proposition \ref{LT_DIFF}, we have $(\LT{\om_{2,n}^{(k)}})(t)=t^k \LT{\om_{2,n}}(t)$ for $t\in\Sigma+i\R$ and thus
    $$\int_{-\infty}^\infty P_n(x) \om_{2,n}^{(k)}(x) dx = (-1)^n n! \LT{\om_{2,n}}(0) \int_{\mathcal{C}} \frac{1}{t^{n-k+1}} \frac{dt}{2\pi i}.$$
    The integral therefore vanishes for all $k\in\{0,\dots,n-1\}$.   
\end{proof}

The polynomials in \eqref{ADT_MOP_FORM} have some nice (de)composition properties in terms of the finite free additive convolution: if we denote them by $P_{\vec{n}}(x;\om_{2,n})$, then we have $P_{\vec{n}}(x;\om_{2,n}) = P_{n}(x;\om_{2,n}^1) \boxplus_{\sz{n}} P_{\vec{n}}(x;\om_{2,n}^2)$ whenever $\om_{2,n}=\om_{2,n}^1\ast_{\LT{}}\om_{2,n}^2$. The systems of weights to which Proposition \ref{ADT_MOP} applies are described in \eqref{MOPE_ADT_w}.

%%%---------------------------------
\section{Proof of Theorem \ref{MOPE_MDT} \& \ref{MOPE_ADT}} 

In this section, we will prove Theorem \ref{MOPE_MDT} \& \ref{MOPE_ADT}. We will do so in several steps. In order to prove Theorem \ref{MOPE_MDT}, structurally, we have to show the following. Recall that
    $$ \mathcal{W}(n;w_1,\dots,w_r) = \{ x\mapsto x^{k-1} w_j(x) \mid k=1,\dots,n_j,j=1,\dots,r \}.$$

\begin{comment}
    \begin{thm} \label{MDT_MR}
    Let $\vec{w}$ be a system of $r$ real-valued and a.e. positive weights in $ L^1_{\MT{},\Sigma}(\R_{>0})$ with $\Z_{\geq 1}\subset \Sigma$. Then the following are equivalent:
    \begin{itemize}
        \item[$i)$] for all $\vec{n}\in\mathcal{S}^r$, the set $\mathcal{W}_{\vec{n}}(\vec{w})$ is of multiplicative derivative type w.r.t. some function $\om_{\vec{n}}$,
        \item[$ii)$] there exists an invertible upper-triangular matrix $U$ such that the weights in $\vec{v} = \vec{w} U$ have a Mellin transform of the form
    \begin{equation} \label{MDT_MR_FORM}
        \MT{v}_j(s) = c^s \, \prod_{i=1}^r \frac{\Gamma(s+a_i)^{d_1(i)}}{\Gamma(s+b_i)^{d_2(i)}} \frac{s^{j-1}}{\prod_{i=1}^j (s+b_i)^{d_2(i)}}, \quad s\in\Sigma,
    \end{equation}
    for some $a_i\in\C$, $b_i\in\R$, $c\in\R$ and $d_1,d_2:\{1,\dots,r\}\to\{0,1\}$ with $d_1=1$ or $d_2=1$ on $\{1,\dots,r\}$.
    \end{itemize}
    In that case, for all $\vec{n}\in\mathcal{S}^r$,
    $$ \MT{\om}_{\vec{n}}(s) = \prod_{i=1}^r \frac{\Gamma(s+a_i)^{d_1(i)}}{\Gamma(s+b_i+n_i)^{d_2(i)}},\quad s\in\Sigma.$$
\end{thm}
\end{comment}

\begin{thm} \label{MOPE_MDT_CORE}
    Let $v_1,\dots,v_n\in L^1_{\MT{},\Sigma}(\R_{>0})$. Then the following are equivalent:
    \begin{itemize}
        \item[$i)$] $\Z_{\geq 1}\subset\Sigma$ and there exists positive $w_1,\dots,w_r\in L^1_{\MT{},\Sigma}(\R_{>0})$ such that 
        $$\spn{v_j}_{j=1}^n=\text{span}\,\mathcal{W}(n;w_1,\dots,w_r),$$ 
        and, for all $m\in\{1,\dots,r+1\}$, the functions in $\mathcal{W}(m;w_1,\dots,w_r)$ are of multiplicative derivative type,
        \item[$ii)$] $\spn{v_j}_{j=1}^n=\text{span}\,\mathcal{W}(n;w_{1,1},\dots,w_{1,r})$ with $w_{1,1},\dots,w_{1,r}$ as in \eqref{MOPE_MDT_w},
        \item[$iii)$] $v_1,\dots,v_n$ are of multiplicative derivative type w.r.t. $\om_{1,n}$ as in \eqref{MOPE_MDT_om}.
    \end{itemize}
\end{thm}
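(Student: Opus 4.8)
The plan is to push everything through the Mellin transform, turning the analytic ``derivative type'' condition into a statement about rank-one polynomial modules, and then to run the cycle $i)\Rightarrow ii)\Rightarrow iii)\Rightarrow i)$. The basic dictionary is as follows: for $v_1,\dots,v_n\in L^{1}_{\MT{},\Sigma}(\R_{>0})$ with $V_j:=\MT{v_j}$, the family $\{v_j\}_{j=1}^n$ is of multiplicative derivative type with respect to some $\om$ if and only if $\spn{V_j}_{j=1}^n=\MT{\om}\cdot\R[s]_{\leq n-1}$ as spaces of functions on $\Sigma+i\R$. Indeed $(x\tfrac{d}{dx})^{k}=(-1)^{k}D^{k}$ with $D$ as in Proposition~\ref{MT_DIFF}, so applying that proposition with derivative order $n-1$ yields both $\MT{D^{k}\om}(s)=s^{k}\MT{\om}(s)$ and that the regularity of $\om$ demanded in Definition~\ref{MDT_def} is \emph{automatic}: once $s^{n-1}\MT{\om}$ lies in $\spn{V_j}_{j=1}^n$ (the Mellin image of the span of the $v_j$) it equals $\MT{g}$ for some $g\in L^{1}_{\MT{},\Sigma}(\R_{>0})$, whereupon Proposition~\ref{MT_DIFF} furnishes a representative of $\om$ in $C^{n-2}(\R_{>0})$ with $(n-2)$-th derivative in $AC_{\text{loc}}(\R_{>0})$. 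Likewise multiplication by $x^{k-1}$ becomes the shift $s\mapsto s+k-1$, so $\text{span}\,\mathcal{W}(n;w_1,\dots,w_r)$ transforms into $\spn{\MT{w_j}(\,\cdot\,+k-1)\mid 1\le k\le n_j,\ 1\le j\le r}$.

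With this dictionary, $ii)\Leftrightarrow iii)$ is exactly the span-level content of Proposition~\ref{MOPE_MDT_w_om}, which I would prove by direct computation. Writing $G(s)=c^{s}\prod_i\Gamma(s+a_i)^{d_1(i)}/\Gamma(s+b_i)^{d_2(i)}$ and using $\Gamma(s+b_i+n_i)=(s+b_i)_{n_i}\Gamma(s+b_i)$ one gets $\MT{\om}_{1,n}(s)=G(s)/\prod_i(s+b_i)_{n_i}^{d_2(i)}$, and the same shift identities give $\MT{w_{1,j}}(s+k-1)=\MT{\om}_{1,n}(s)\cdot R_{j,k}(s)$ with each $R_{j,k}$ an explicit \emph{polynomial} — a product of shifted Pochhammer symbols times a power of $s$ — the denominators cancelling because on the step line all $n_i$ differ by at most one, so $k\le n_j$ forces the needed divisibility. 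It then remains to check that the $n$ polynomials $\{R_{j,k}\mid 1\le k\le n_j,\ 1\le j\le r\}$ span $\R[s]_{\leq n-1}$; this is a degree count plus a triangularity argument, and the step-line shape of $\vec n$ is precisely what makes the attained degrees exhaust $\{0,1,\dots,n-1\}$ in the admissible parameter regime (which, by Proposition~\ref{MDT_COND}, has all $d_1(i)=1$).

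The substance is $i)\Rightarrow ii)$. Put $W_j=\MT{w_j}$. For $m\le r$ the step-line multi-index of $m$ is $(1,\dots,1,0,\dots,0)$, so $\mathcal{W}(m;w_1,\dots,w_r)=\{w_1,\dots,w_m\}$ and the hypothesis reads $\spn{W_1,\dots,W_m}=\Omega_m\cdot\R[s]_{\leq m-1}$ for some analytic $\Omega_m$ on $\Sigma+i\R$; for $m=r+1$ it reads $\spn{W_1,W_1(\,\cdot\,+1),W_2,\dots,W_r}=\Omega_{r+1}\cdot\R[s]_{\leq r}$. Comparing consecutive levels, the inclusion $\Omega_m\R[s]_{\leq m-1}\subseteq\Omega_{m+1}\R[s]_{\leq m}$ forces $\Omega_m/\Omega_{m+1}$ to be a polynomial of degree $\le 1$ (test against $1$ and against $s^{m-1}$), so iterating from $\Omega_1=\MT{w_1}$ gives $W_1=L(s)\,\Omega_{r+1}(s)$ with $\deg L\le r$. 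Feeding the extra generator $W_1(\,\cdot\,+1)\in\Omega_{r+1}\R[s]_{\leq r}$ back in, and using $W_1(s+1)=L(s+1)\Omega_{r+1}(s+1)$, one sees $\Omega_{r+1}(s+1)/\Omega_{r+1}(s)$ — hence $g(s):=W_1(s+1)/W_1(s)$ — is a \emph{rational} function, necessarily of the form $c\prod_i(s+a_i)/\prod_i(s+b_i)$. Since $g$ is rational, $g(\nu+1)/g(\nu)\to1$ as $\nu\to\infty$, so Proposition~\ref{MT_DE_uni} applies to $\MT{f}(s+1)=g(s)\MT{f}(s)$: up to scalars it has at most one positive solution in $L^{1}_{\MT{},\Sigma}(\R_{>0})$. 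The explicit $w_{1,1}$ of \eqref{MOPE_MDT_w} is one such positive solution — after relabelling the zeros and poles of $g$ as the $a_i,b_i$ and absorbing $1/(s+b_1)^{d_2(1)}$ into $\Gamma(s+b_1+1)^{d_2(1)}$ — whence $\MT{w_1}$ is a scalar multiple of $\MT{w_{1,1}}$. Reading the chain $\spn{W_1,\dots,W_j}=\Omega_j\R[s]_{\leq j-1}$ back downward, each $W_j$ is determined modulo $\spn{W_1,\dots,W_{j-1}}$ up to a scalar — exactly the upper-triangular freedom under which $\mathcal{W}(n;\,\cdot\,)$ is invariant — so one may pick representatives $W_j=\MT{w_1}\cdot s^{j-1}/\prod_{i=2}^{j}(s+b_i)^{d_2(i)}=\MT{w_{1,j}}$, giving $\spn{v_j}=\text{span}\,\mathcal{W}(n;w_1,\dots,w_r)=\text{span}\,\mathcal{W}(n;w_{1,1},\dots,w_{1,r})$. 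The return $ii)\Rightarrow i)$ then amounts to checking the remaining requirements for the representatives $w_{1,j}$: that $\Z_{\ge1}\subset\Sigma$ (so the shift $x w_1$ stays in $L^{1}_{\MT{},\Sigma}(\R_{>0})$, which the level $m=r+1$ hypothesis needs), that they are positive in the admissible parameter range, and that $\mathcal{W}(m;w_{1,1},\dots,w_{1,r})$ is of multiplicative derivative type for $m\le r+1$ — the last following from the computation of the second paragraph applied to the step-line multi-index of each $m$.

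\textbf{The main obstacle} is the engine of $i)\Rightarrow ii)$, and within it the passage from the \emph{single} extra shifted generator at level $m=r+1$ to full rigidity: first, extracting that $\Omega_{r+1}(s+1)/\Omega_{r+1}(s)$ is a genuine rational function, and — more delicately — removing the period-$1$ ambiguity that any first-order difference equation carries. This is where positivity of the $w_j$ together with the Bohr–Mollerup-type uniqueness of Proposition~\ref{MT_DE_uni} are indispensable, and it is the reason the hypotheses of $i)$ must carry both positivity and $\Z_{\ge1}\subset\Sigma$; everything else reduces to bookkeeping with Pochhammer symbols and with the step-line combinatorics of $\vec n$.
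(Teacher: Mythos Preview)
Your plan is correct and is essentially the paper's own proof, reorganised. The paper packages your ``dictionary'' as Theorem~\ref{MDT_ALG}, your chain argument that $\Omega_m/\Omega_{m+1}$ has degree $\le 1$ as Propositions~\ref{DT_ALG_FUN}--\ref{DT_ALG} (there the two cases $d(m)=0,1$ are separated, but the content is identical to your test-against-$1$-and-$s^{m-1}$ step), and your passage from the level-$(r+1)$ generator $W_1(\,\cdot\,+1)$ to a rational $g(s)=\MT{w_1}(s+1)/\MT{w_1}(s)$ plus Bohr--Mollerup as Proposition~\ref{MDT_SR} followed by Proposition~\ref{MT_DE_uni}; the upper-triangular clean-up is Proposition~\ref{DT_ALG_COMP}. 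The only genuine variation is in $ii)\Leftrightarrow iii)$: you compute the polynomial ratios $R_{j,k}(s)=\MT{w_{1,j}}(s+k-1)/\MT{\om_{1,n}}(s)$ directly and argue by a global degree/triangularity count, whereas the paper runs an induction on $n$, feeding the known structure at levels $<n$ back through Proposition~\ref{MDT_SR} to verify the degree condition at level $n$. Your direct computation is shorter once the Pochhammer cancellations are checked (the step-line inequality $n_i\ge n_j-1$ is exactly what makes each $R_{j,k}$ a genuine polynomial, as you say); the paper's induction avoids having to track those cancellations explicitly but is otherwise no more informative.
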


The fact that we have to assume positivity of the weights is an analytic obstruction caused by the use of the Mellin transform. By dropping this condition, we can essentially only recover the moments of the weights as shown in the result below.

\begin{thm} \label{MOPE_MDT_CORE_MOM}
    Suppose that $w_1,\dots,w_r\in L^1_{\MT{},\Sigma}(\R_{>0})$ are such that the functions in $\mathcal{W}(m;w_1,\dots,w_r)$ are of multiplicative derivative type for all $m\in\{1,\dots,n\}$. Then:
    \begin{itemize}
        \item[$i)$] $\text{span}\,\mathcal{W}(m;w_1,\dots,w_r)=\text{span}\,\mathcal{W}(n;w_{1,1},\dots,w_{1,r})$ where $w_{1,1},\dots,w_{1,r}$ are such that \eqref{MOPE_MDT_w} holds for all $s\in\Sigma\cap\Z_{\geq 1}$,
        \item[$ii)$] the functions in $\mathcal{W}(m;w_1,\dots,w_r)$ are of multiplicative derivative type w.r.t. $\om_{1,n}$ where $\om_{1,n}$ is such that \eqref{MOPE_MDT_om} holds for all $s\in\Sigma\cap\Z_{\geq 1}$.
    \end{itemize}
\end{thm}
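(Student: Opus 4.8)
The plan is to transport the whole statement to the Mellin side, where the derivative-type hypotheses turn into a purely algebraic condition on a nested chain of cyclic subspaces, and to read off the prescribed form there. Write $W_j=\MT{w_j}$; these are analytic on $\Sigma+i\R$, and since here $\Sigma$ is necessarily of the form $(s_0,\infty)$, each $x^{k-1}w_j$ again lies in $L^1_{\MT{},\Sigma}(\R_{>0})$. For each $m$ let $\om_m$ be a function w.r.t.\ which the functions in $\mathcal{W}(m;w_1,\dots,w_r)$ are of multiplicative derivative type. Both $\om_m$ and every $D^{i-1}\om_m$ lie in $\mathcal{W}(m;w_1,\dots,w_r)\subset L^1_{\MT{},\Sigma}(\R_{>0})$, so Proposition~\ref{MT_DIFF} — in the direction $i)\im ii)$, where its deliberately weak hypotheses are exactly what let us invoke it at each level — gives $\MT{D^{i-1}\om_m}(s)=s^{i-1}\MT{\om_m}(s)$ on $\Sigma+i\R$. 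By linearity and injectivity of the Mellin transform, the hypothesis for $m$ then says precisely that the $m$-dimensional space $V_m:=\text{span}\{W_j(s+k-1)\mid k=1,\dots,m_j,\ j=1,\dots,r\}$ (for $\vec m\in\mathcal{S}^r$ with $\sz{m}=m$) is cyclic for multiplication by $s$, with cyclic vector $g_m:=\MT{\om_m}$.

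Next I would extract the recursive skeleton of the chain $V_1\subset\dots\subset V_n$. Each inclusion raises the dimension by one, so comparing cyclic vectors forces $g_m=\ell_m\,g_{m+1}$ for a polynomial $\ell_m$ with $\deg\ell_m\le1$, whence $g_m=W_1/\prod_{i=1}^{m-1}\ell_i$ for every $m$. Feeding this back into the steps $m\le r$, at which the newly appearing function is $W_m$, shows — after an upper-triangular change of basis of $(w_1,\dots,w_r)$, which leaves each $\text{span}\,\mathcal{W}(m;\cdot)$ unchanged — that one may take $W_j$ equal to $W_1$ times a rational function with numerator $s^{j-1}$ and denominator $\prod_{i=1}^{j-1}\ell_i$; feeding it into the step $m=r+1$, at which the newly appearing function is the shift $W_1(s+1)$, yields a first-order difference equation
\[
\frac{W_1(s+1)}{W_1(s)}=\frac{P(s)}{\prod_{i=1}^{r}\ell_i(s)},\qquad \deg P\le r .
\]
A dimension count at that same step, together with $W_1(\cdot+1)\notin V_r$, pins the degree of this rational function down to exactly $r$; this is the origin of the dichotomy ``all $d_1(i)=1$'' (when $\deg P=r$) versus ``all $d_2(i)=1$'' (when all $\ell_i$ are genuinely linear). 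Running the remaining steps $m=r+2,\dots,n$ then identifies the successive $\ell_i$ with the shifted linear factors $s+b_i+(\text{integer})$ prescribed by the step-line, after relabelling the $b_i$ suitably, so that $\prod_i\ell_i$ and $P$ become exactly the Pochhammer-type products occurring in \eqref{MOPE_MDT_w} and \eqref{MOPE_MDT_om}.

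Finally, the displayed difference equation is, verbatim, the one satisfied by $c^{s}\prod_{i=1}^{r}\frac{\Gamma(s+a_i)^{d_1(i)}}{\Gamma(s+b_i)^{d_2(i)}}$ for suitable $a_i,b_i,c$; since $\Z_{\geq 1}\subset\Sigma$, iterating the recursion upward from the least integer of $\Sigma$ forces $W_1$ to agree with this Gamma-product on $\Z_{\geq 1}$. This is the sole place where, lacking positivity of the weights, one cannot upgrade to agreement on the whole strip via the Bohr--Mollerup-type uniqueness of Proposition~\ref{MT_DE_uni} — which is exactly why the conclusion is phrased for $s\in\Sigma\cap\Z_{\geq 1}$. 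Feeding this into the identities of the second paragraph, $g_m=\MT{\om_m}$ and the (transformed) $W_j$ have precisely the moments prescribed by \eqref{MOPE_MDT_om} and \eqref{MOPE_MDT_w}: so $\om_m$ itself serves as $\om_{1,m}$ in $ii)$, and the transformed weights serve as the $w_{1,j}$ in $i)$, the underlying change of basis being upper-triangular and hence span-preserving. The step I expect to be the main obstacle is this middle one — the bookkeeping that tracks how the unit shifts interact with the cyclic structure across the ``wrap-around'' of the step-line, and that extracts the degree-$r$ dichotomy — rather than any single analytic estimate.
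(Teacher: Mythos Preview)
Your proposal is correct and follows essentially the same route as the paper: the paper packages your cyclic-subspace argument into Theorem~\ref{MDT_ALG} and Propositions~\ref{DT_ALG_FUN}--\ref{DT_ALG_COMP} (combined as Proposition~\ref{MDT_SR}), derives the same first-order difference equation for $\MT{w_1}(s+1)/\MT{w_1}(s)$ at step $r+1$, and iterates it on $\Sigma\cap\Z_{\geq 1}$ to recover \eqref{MOPE_MDT_w}; part~$ii)$ is then obtained via a separate forward induction on $n$ that identifies the remaining linear factors with the shifted $(s+b_i)$'s, which is precisely your ``running the remaining steps $m=r+2,\dots,n$''. The only minor slip is your assertion that $\Sigma$ is necessarily of the form $(s_0,\infty)$---this is a hypothesis in Theorem~\ref{MOPE_MDT_CORE} but not here---though nothing in your argument actually depends on it.
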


In order to prove Theorem \ref{MOPE_ADT}, structurally, we have to show the following. 

\begin{thm} \label{MOPE_ADT_CORE}
    Let $v_1,\dots,v_n\in L^1_{\LT{},\Sigma}(\R)$. Then the following are equivalent:
    \begin{itemize}
        \item[$i)$] there exists $w_1,\dots,w_r\in L^1_{\LT{},\Sigma}(\R)$ such that 
        $$\spn{v_j}_{j=1}^n=\text{span}\,\mathcal{W}(n;w_1,\dots,w_r),$$
        and, for all $m\in\{1,\dots,r+1\}$, the functions in $\mathcal{W}(m;w_1,\dots,w_r)$ are of additive derivative type,
        \item[$ii)$] $\spn{v_j}_{j=1}^n=\text{span}\,\mathcal{W}(n;w_{2,1},\dots,w_{2,r})$ with $w_{2,1},\dots,w_{2,r}$ as in \eqref{MOPE_ADT_w},
        \item[$iii)$] $v_1,\dots,v_n$ are of additive derivative type w.r.t. $\om_{2,n}$ as in \eqref{MOPE_ADT_om}.
    \end{itemize}
\end{thm}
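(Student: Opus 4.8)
I would follow the same strategy as for Theorem~\ref{MOPE_MDT_CORE}, now with the Laplace transform in place of the Mellin transform: after transforming, both ``of additive derivative type'' and ``multiple orthogonal polynomial ensemble'' turn into purely algebraic identities between holomorphic functions on the strip $\Sigma+i\R$. The dictionary is: for $f\in L^1_{\LT{},\Sigma}(\R)$ with $\Sigma$ open, $\LT f$ is holomorphic on $\Sigma+i\R$ and $(\LT f)'=-\LT{x\,f(x)}$ whenever $x\,f\in L^1_{\LT{},\Sigma}(\R)$, so multiplication by $x$ becomes $-d/ds$; combined with Proposition~\ref{LT_DIFF} (differentiation $\leftrightarrow$ multiplication by $s^r$), ``$v_1,\dots,v_n$ of additive derivative type w.r.t.\ $\om$'' becomes $\spn{\LT{v_j}}_{j=1}^n=\LT{\om}\cdot\mathcal P_{n-1}$ with $\mathcal P_{n-1}=\spn{1,s,\dots,s^{n-1}}$, while $\text{span}\,\mathcal W(m;w_1,\dots,w_r)$ transforms to $\spn{(\LT{w_j})^{(k-1)}:1\le k\le m_j,\ 1\le j\le r}$.

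Two of the implications are then short. For $ii)\Leftrightarrow iii)$: given $w_{2,1},\dots,w_{2,r}$ as in \eqref{MOPE_ADT_w}, set $\om_{2,n}$ as in \eqref{MOPE_ADT_om} with the unique $\vec n\in\mathcal S^r$, $\sz{n}=n$, and the same parameters; Proposition~\ref{MOPE_ADT_w_om} gives $\text{span}\,\mathcal W(n;w_{2,1},\dots,w_{2,r})=\spn{\om_{2,n}^{(j-1)}}_{j=1}^n$, and conversely the parameters are read off either normal form. For $ii)\Rightarrow i)$ one takes $w_j:=w_{2,j}\in L^1_{\LT{},\Sigma}(\R)$; since these do not depend on the particle number, Proposition~\ref{MOPE_ADT_w_om} applied at each level $m\le r+1$ (with $\om_{2,m}$ indexed by the step-line $\vec m$) shows $\mathcal W(m;w_{2,1},\dots,w_{2,r})$ is of additive derivative type.

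The content is $i)\Rightarrow ii)$. Put $W_j=\LT{w_j}$ and $\Omega_m=\LT{\om_m}$, all holomorphic and nonzero on $\Sigma+i\R$ (each $\om_m$ lies in $\spn{v_j}_{j=1}^n\subset L^1_{\LT{},\Sigma}(\R)$), so $i)$ reads, for $m=1,\dots,r+1$,
$$\spn{(W_j)^{(k-1)}:1\le k\le m_j,\ 1\le j\le r}=\Omega_m\cdot\mathcal P_{m-1}.$$
First, $\mathcal W(m;\cdot)\subset\text{span}\,\mathcal W(m+1;\cdot)$ gives $\Omega_m\mathcal P_{m-1}\subseteq\Omega_{m+1}\mathcal P_m$; testing against $1$ and $s^{m-1}$ forces $\Omega_m/\Omega_{m+1}$ to be a polynomial of degree $\le1$, so $\Omega_m=\Omega_{r+1}\prod_{i=m}^r q_i$ with the $q_i$ linear or constant --- this is where the denominator factors $(s+b_i)^{d_2(i)}$ originate. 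Second, the level-$r$ identity gives $W_j=\Omega_r p_j$ with $\{p_1,\dots,p_r\}$ a basis of $\mathcal P_{r-1}$ adapted to the flag determined by the $q_i$; since $\text{span}\,\mathcal W(n;\cdot)$ is invariant under $(w_j)\mapsto(w_j)U$ for $U$ invertible upper triangular, one may normalize the $p_j$ to $s^{j-1}\prod_{i>j}(s+b_i)^{d_2(i)}$, i.e.\ the polynomial factor of \eqref{MOPE_ADT_w}. Third, the level-$(r+1)$ identity contains $W_1'\in\Omega_{r+1}\mathcal P_r$; writing $W_1=\Omega_{r+1}\cdot(\text{polynomial})$ and differentiating, one finds that $\Omega_{r+1}'/\Omega_{r+1}$ is a rational function $R$ with denominator $\prod_i(s+b_i)^{d_2(i)}$ and numerator of degree $\le r$. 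Solving $\Omega_{r+1}'=R\,\Omega_{r+1}$ gives $\Omega_{r+1}(s)=C\exp(\int_{s_0}^s R)$, unique up to the scalar $C$, and writing $R(t)=c\prod_i(t+a_i)^{d_1(i)}/(t+b_i)^{d_2(i)}$ reproduces \eqref{MOPE_ADT_om} at level $r+1$ while $W_j=\Omega_r p_j$ becomes exactly \eqref{MOPE_ADT_w}. Injectivity of the Laplace transform identifies the (upper-triangularly adjusted) $w_j$ with $w_{2,j}$, and since $\spn{v_j}_{j=1}^n=\text{span}\,\mathcal W(n;w_1,\dots,w_r)=\text{span}\,\mathcal W(n;w_{2,1},\dots,w_{2,r})$ this is $ii)$.

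The hard part is the last step above: upgrading the bare rationality of $\Omega_{r+1}'/\Omega_{r+1}$ to the precise admissible shape of $R$, and simultaneously fixing the global parameter structure --- the dichotomy that all $d_1(i)=1$ or all $d_2(i)=1$, and side conditions such as $\text{Re}\,a_i>-1$ and $\sum_i d_1(i)\,\text{Im}\,a_i=0$. Here one must exploit that $\Omega_{r+1}=\LT{\om_{r+1}}$ is a genuine Laplace transform of an $L^1_{\LT{},\Sigma}(\R)$-function: by the Riemann--Lebesgue lemma (Proposition~\ref{LT_RL}), $\Omega_{r+1}(s+it)\to0$ as $|t|\to\infty$ for $s\in\Sigma$, which excludes the solutions $\exp(\int R)$ that blow up along vertical lines and thereby pins the parameters down, just as in Proposition~\ref{ADT_COND}. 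A secondary, mostly bookkeeping, difficulty is the flag/upper-triangular normalization of the $p_j$ together with the step-line combinatorics of the multi-indices $\vec m$. By contrast with the multiplicative setting, no Bohr--Mollerup-type uniqueness (Proposition~\ref{MT_DE_uni}) is needed: because multiplication by $x$ algebraizes to a \emph{derivative} rather than a \emph{shift}, the equation governing $\Omega_{r+1}$ is differential rather than a difference equation, so its solution space is one-dimensional and uniqueness up to a scalar is automatic.
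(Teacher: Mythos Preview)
Your overall strategy coincides with the paper's: transform to the Laplace side via Theorem~\ref{ADT_ALG}, use the algebraic machinery of Proposition~\ref{DT_ALG} (which you essentially re-derive with the $\Omega_m/\Omega_{m+1}$ argument) to obtain a first-order ODE for $\LT{w_1}$, integrate it, and normalize the remaining weights via an upper-triangular change of basis (Proposition~\ref{DT_ALG_COMP}). Your observation that the additive setting is simpler than the multiplicative one because the governing equation is differential rather than a difference equation, so that no Bohr--Mollerup/positivity hypothesis is needed, is exactly right and is why Theorem~\ref{MOPE_ADT_CORE} has no positivity assumption in $i)$ while Theorem~\ref{MOPE_MDT_CORE} does.

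However, you misidentify the ``hard part''. The dichotomy that all $d_1(i)=1$ or all $d_2(i)=1$ is \emph{not} an analytic constraint requiring Riemann--Lebesgue; it is purely algebraic and follows from the degree condition in Proposition~\ref{DT_ALG}~$ii)$: writing $(\LT{w_1})'/\LT{w_1}=p_r(s)/\prod_{i=1}^r(s+b_i)^{d_2(i)}$, the maximum of the degrees of numerator and denominator must be exactly $r$, since otherwise a partial-fraction argument would force $(\LT{w_1})'\in\spn{\LT{v_j}}_{j=1}^r$, contradicting dimension. Moreover, side conditions such as $\text{Re}\,a_i>-1$ are not part of this theorem at all: the parameters in \eqref{MOPE_ADT_w} are only constrained by $a_i\in\C$, $b_i,c,s_0\in\R$ together with the dichotomy. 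The condition $\text{Re}\,a_i>-1$ belongs to the \emph{multiplicative} setting (Proposition~\ref{MDT_COND}), and Proposition~\ref{ADT_COND}, which does invoke Riemann--Lebesgue, is a separate refinement not used in the proof of Theorem~\ref{MOPE_ADT_CORE}. The paper's argument for $i)\Rightarrow ii)$ never appeals to Proposition~\ref{LT_RL}.
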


We will prove Theorem \ref{MOPE_MDT_CORE_MOM}, \ref{MOPE_MDT_CORE} \& \ref{MOPE_ADT_CORE} in several steps. First, in Section \ref{DT},  we will give an algebraic description of functions of multiplicative/additive derivative type. Afterwards, in Section \ref{CLASS}, we will restrict to the functions that appear in multiple orthogonal polynomial ensembles, i.e. the functions in 
$\mathcal{W}(n;w_1,\dots,w_r)$. The additional structure will allow us to identify the Mellin/Laplace transform of $w_1,\dots,w_r$. This will show $i)$ in Theorem \ref{MOPE_MDT_CORE_MOM} and the implication from $i)$ to $ii)$ in Theorem \ref{MOPE_MDT_CORE} \& \ref{MOPE_ADT_CORE}. It then remains to prove the equivalence between $ii)$ and $iii)$ for all $n\in\N$ (structurally, this is what we need to show for Proposition \ref{MOPE_MDT_w_om}). This is because $ii)$ and $iii)$ together imply $i)$ as the first part of $i)$ follows immediately from $ii)$ and the second part is a consequence of the previous result. This then also shows $ii)$ in Theorem \ref{MOPE_MDT_CORE_MOM}.

\subsection{Algebraic description of derivative type} \label{DT}

Crucial to our approach are the two theorems below, which can be seen as an alternative definition of collections of functions of multiplicative/additive derivative type. They essentially allow us to convert the analytic conditions in Definition~\ref{MDT_def}/\ref{ADT_def} into algebraic ones by moving to the Mellin/Laplace space. These results also motivate the precise analytic conditions that appear in these definitions.

\begin{thm} \label{MDT_ALG}
    Suppose that $v_1,\dots,v_n\in L_{\MT{},\Sigma}^1(\R_{>0})$. Then the following are equivalent:
    \begin{itemize}
        \item[$i)$] $v_1,\dots,v_n$ are of multiplicative derivative type w.r.t. some function $\om:\R_{>0}\to\C$,
        \item[$ii)$] there exists a function $\hat{\om}:\Sigma\to\C$ such that
        $$\spn{\MT{v}_j(s)}_{j=1}^n = \spn{ s^{j-1} \hat{\om}(s)}_{j=1}^n,\quad s\in \Sigma.$$
    \end{itemize}
    In that case, $\MT{\om}=\hat{\om}$ on $\Sigma$.
\end{thm}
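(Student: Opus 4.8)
The plan is to prove the equivalence by using Proposition \ref{MT_DIFF} to translate the analytic derivative-type condition into an algebraic one on the Mellin side, and then to push the equality of spans back and forth through the (injective, linear) Mellin transform.

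\textbf{The implication $i)\Rightarrow ii)$.} Suppose $v_1,\dots,v_n$ are of multiplicative derivative type with respect to $\om\in C^{n-2}(\R_{>0})$ with $\om^{(n-2)}\in AC_{\text{loc}}(\R_{>0})$. Writing $D$ for the operator $(Dg)(x)=-xg'(x)$, the spanning condition says $\spn{v_j}_{j=1}^n=\spn{D^{j-1}\om}_{j=1}^n$ a.e. on $\R_{>0}$. In particular $\om\in\spn{v_j}_{j=1}^n\subset L^1_{\MT{},\Sigma}(\R_{>0})$, so $\MT{\om}$ is defined (and analytic) on $\Sigma+i\R$; set $\hat{\om}=\MT{\om}|_\Sigma$. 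For each $r\in\{1,\dots,n-1\}$, the function $D^r\om$ lies in $\spn{v_j}_{j=1}^n\subset L^1_{\MT{},\Sigma}(\R_{>0})$, so by the implication $i)\Rightarrow ii)$ of Proposition \ref{MT_DIFF} (applied to $f=\om$, which by the regularity hypotheses qualifies, with $F=\om$) we get $\MT{D^r\om}(s)=s^r\MT{\om}(s)=s^r\hat{\om}(s)$ for all $s\in\Sigma+i\R$. Since the Mellin transform is linear, applying it to the span identity $\spn{v_j}_{j=1}^n=\spn{D^{j-1}\om}_{j=1}^n$ and using that the two families $\{v_j\}$ and $\{D^{j-1}\om\}$ each span an $n$-dimensional space of $L^1_{\MT{},\Sigma}(\R_{>0})$-functions whose Mellin transforms are linearly independent functions on $\Sigma$ (this is where I will need a small lemma, see below), we conclude $\spn{\MT{v}_j(s)}_{j=1}^n=\spn{s^{j-1}\hat{\om}(s)}_{j=1}^n$ on $\Sigma$, which is $ii)$.

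\textbf{The implication $ii)\Rightarrow i)$.} Conversely, suppose there is $\hat{\om}:\Sigma\to\C$ with $\spn{\MT{v}_j(s)}_{j=1}^n=\spn{s^{j-1}\hat{\om}(s)}_{j=1}^n$ on $\Sigma$. Taking $j=1$ shows $\hat{\om}=\sum_j c_j\MT{v_j}$ for constants $c_j$, hence $\hat{\om}=\MT{\om}|_\Sigma$ where $\om:=\sum_j c_j v_j\in L^1_{\MT{},\Sigma}(\R_{>0})$. For each $r\in\{1,\dots,n-1\}$, the function $s\mapsto s^r\hat{\om}(s)=s^r\MT{\om}(s)$ lies in the span of $\{\MT{v_j}\}$, hence equals $\MT{g_r}$ for some $g_r$ in $\spn{v_j}_{j=1}^n\subset L^1_{\MT{},\Sigma}(\R_{>0})$; by the implication $ii)\Rightarrow i)$ of Proposition \ref{MT_DIFF} this forces $\om$ (after modification on a null set) to lie in $C^{r-1}(\R_{>0})$ with $\om^{(r-1)}\in AC_{\text{loc}}(\R_{>0})$ and $D^r\om=g_r$ a.e. Taking $r=n-1$ gives the regularity demanded in Definition \ref{MDT_def}, and $\MT{D^r\om}=s^r\MT{\om}$; applying the (injective) inverse Mellin transform to the span identity on $\Sigma$ — again using linear independence to match dimensions — yields $\spn{v_j}_{j=1}^n=\spn{D^{j-1}\om}_{j=1}^n$ a.e., i.e. $v_1,\dots,v_n$ are of multiplicative derivative type with respect to $\om$. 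The final clause $\MT{\om}=\hat{\om}$ on $\Sigma$ has been recorded in both directions.

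\textbf{The main obstacle.} The routine-looking but genuinely delicate point is the passage between "equality of spans of functions on $\R_{>0}$" and "equality of spans of their Mellin transforms on the real interval $\Sigma$." The Mellin transform is linear and injective on $L^1_{\MT{},\{c\}}(\R_{>0})$, so it carries a span of functions into the span of their images; the subtlety is that $ii)$ only asserts equality of the spans of the \emph{restrictions} to the real interval $\Sigma$, whereas injectivity/analyticity live on the strip $\Sigma+i\R$. The key observation that makes this work is that each $\MT{v_j}$ is analytic on $\Sigma+i\R$, so any real-analytic linear relation $\sum_j\lambda_j\MT{v_j}(s)=0$ holding for $s$ in the open interval $\Sigma$ propagates to all of $\Sigma+i\R$ by the identity theorem, and hence (by injectivity) to a relation among the $v_j$ themselves; the same argument applies to the family $s\mapsto s^{j-1}\hat{\om}(s)$ once we know $\hat{\om}=\MT{\om}|_\Sigma$ is the restriction of an analytic function that is not identically zero. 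Thus both families have the same dimension ($=n$ when the $v_j$ are independent, and in general the common dimension of the two spans), and the identity of spans on $\Sigma$ is equivalent to the identity of spans on the strip, which transfers to $\R_{>0}$. I would isolate this as a short preliminary observation and then the two implications above become bookkeeping with Proposition \ref{MT_DIFF}.
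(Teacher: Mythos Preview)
Your proposal is correct and follows essentially the same strategy as the paper: both directions rely on Proposition~\ref{MT_DIFF} to translate between the analytic derivative condition and the algebraic Mellin-side condition, with the span equalities transported via linearity and injectivity of the Mellin transform.

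The one place where the paper is more careful is in $ii)\Rightarrow i)$. You apply Proposition~\ref{MT_DIFF} separately for each $r\in\{1,\dots,n-1\}$, which a priori yields a different continuous representative $F_r$ each time; you then assert that ``$\om$ (after modification on a null set)'' has the required properties for all $r$ simultaneously, without saying why the modifications are compatible. The paper instead applies Proposition~\ref{MT_DIFF} once with $r=n-1$ to fix the representative $\om\in C^{n-2}(\R_{>0})$, and then runs a short induction (using Proposition~\ref{MT_DIFF} with $r=1$ at each step, together with the fact that two continuous functions agreeing a.e.\ agree everywhere) to establish $\nu_j=D^{j-1}\om$ a.e.\ for every $j$. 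Your approach can be completed the same way once you note that each $F_r$ and the fixed $\om$ are continuous and agree a.e., hence coincide. Your explicit discussion of the analytic continuation from $\Sigma$ to the strip is more detailed than the paper's, which handles the same issue via a terse dimension count; both routes are fine.
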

\begin{proof}
    Denote $(Df)(x)=-xf'(x)$ for functions $f:\R_{>0}\to\C$. For the implication from $i)$ to $ii)$, we know that $\om\in C^{n-2}(\R_{>0})$, with $\om^{(n-2)} \in AC_{\text{loc}}(\R_{>0})$, is such that a.e.
        $$\spn{v_j}_{j=1}^n = \spn{D^{j-1}\om}_{j=1}^n.$$
    If we apply Proposition \ref{MT_DIFF} to $\nu_j = D^{j-1}\om\in\spn{v_j}_{j=1}^n\subset L_{\MT{},\Sigma}^1(\R_{>0})$, we obtain
        $$ \MT{\nu}_j(s) = s^{j-1} \MT{\om}(s) ,\quad s\in\Sigma, $$        
    and thus
        $$\spn{\MT{\nu}_j(s)}_{j=1}^n = \spn{ s^{j-1} \hat{\om}(s)}_{j=1}^n,\quad s\in \Sigma.$$
    In particular, $\dim\spn{\MT{\nu}_j(s)}_{j=1}^n = n$. Together with the fact that
        $$\spn{\MT{\nu}_j(s)}_{j=1}^n \subset \spn{\MT{v}_j(s)}_{j=1}^n,\quad s\in \Sigma,$$
    this leads to desired result. \\
    For the implication from $ii)$ to $i)$, we take $\nu_j\in\spn{v_j}_{j=1}^n\subset L_{\MT{},\Sigma}^1(\R_{>0})$ such that $\MT{\nu}_j(s) = s^{j-1} \hat{\om}(s)$ for all $s\in\Sigma$. Since $\MT{\nu}_n(s) = s^{n-1} \MT{\nu}_1(s)$ for all $s\in\Sigma$, it follows from Proposition \ref{MT_DIFF} that there exists $\om\in C^{n-2}(\R_{>0})$, with $\om^{(n-2)} \in AC_{\text{loc}}(\R_{>0})$, such that a.e. $\nu_1=\om$ and $\nu_n=D^{n-1}\om$. We will show by induction on $j\in\{1,\dots,n-1\}$ that also a.e. $\nu_j = D^{j-1}\om$. In that case, the functions $v_1,\dots,v_n$ will be of multiplicative derivative type because
    $$ \spn{\nu_j}_{j=1}^n \subset \spn{v_j}_{j=1}^n,$$
    and $\dim \spn{\nu_j}_{j=1}^n = n$ (if it would be less, we can take the Mellin transform and get a contradiction). For $j=1$, there is nothing to prove. Suppose that $\nu_j = D^{j-1}\om$ for some $j\in\{1,\dots,n-1\}$, then $ \MT{\nu_{j+1}}(s) = s \MT{[D^{j-1}\om]}(s)$ for all $s\in\Sigma$. Proposition \ref{MT_DIFF} then implies that there exists $F\in AC_{\text{loc}}(\R_{>0})$ such that a.e. $F=D^{j-1}\om$ and $\nu_{j+1}=DF$. Since both $D^{j-1}\om,F\in C^{0}(\R_{>0})$, we must have that $F=D^{j-1}\om$ on $\R_{>0}$ and thus a.e. $\nu_{j+1}=D^{j}\om$. 
\end{proof}

\begin{thm} \label{ADT_ALG}
    Suppose that $v_1,\dots,v_n\in L_{\LT{},\Sigma}^1(\R)$. Then the following are equivalent:
    \begin{itemize}
        \item[$i)$] $v_1,\dots,v_n$ is of additive derivative type w.r.t. some function $\om:\R\to\C$,
        \item[$ii)$] there exists a function $\hat{\om}:\Sigma\to\C$ such that
        $$\spn{\LT{v}_j(s)}_{j=1}^n = \spn{ s^{j-1} \hat{\om}(s)}_{j=1}^n,\quad s\in \Sigma.$$
    \end{itemize}
        In that case, $\LT{\om}=\hat{\om}$ on $\Sigma$.
\end{thm}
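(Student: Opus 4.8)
The plan is to mirror the proof of Theorem~\ref{MDT_ALG} essentially verbatim, with the operator $D=-x\,\tfrac{d}{dx}$ replaced by the ordinary derivative $\tfrac{d}{dx}$ and Proposition~\ref{MT_DIFF} replaced by its additive analogue Proposition~\ref{LT_DIFF}; the facts that the Laplace transform of a function in $L^1_{\LT{},\Sigma}(\R)$ is analytic on $\Sigma+i\R$ and that it is injective on $L^1_{\LT{},\{c\}}(\R)$ play the same structural role as before. As in the multiplicative case the argument will also produce the identification $\hat\om=\LT\om$ on $\Sigma$. (An alternative route is to deduce the statement from Theorem~\ref{MDT_ALG} via the change of variables $\tilde f(x)=f(-\ln x)$ from Section~\ref{PRELIM_A}, under which $f\in L^1_{\LT{},\Sigma}(\R)$ becomes $\tilde f\in L^1_{\MT{},\Sigma}(\R)$ with $\LT f=\MT{\tilde f}$; since $-x\,\tfrac{d}{dx}$ conjugates to $\tfrac{d}{dx}$ under this substitution and the $C^{n-2}$/$AC_{\text{loc}}$ conditions are preserved, additive derivative type of $f$ corresponds to multiplicative derivative type of $\tilde f$. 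I would, however, write the direct argument below.)

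For $i)\Rightarrow ii)$: given $\om\in C^{n-2}(\R)$ with $\om^{(n-2)}\in AC_{\text{loc}}(\R)$ realizing the span, set $\nu_j=\om^{(j-1)}$. Each $\nu_j$ lies in $\spn{v_j}_{j=1}^n\subset L^1_{\LT{},\Sigma}(\R)$, so in particular $\om=\nu_1\in L^1_{\LT{},\Sigma}(\R)$, and for every $j$ the hypotheses of Proposition~\ref{LT_DIFF}$(i)$ hold with $f=\om$ and $r=j-1$ (the intermediate derivatives are locally $L^1$ by the regularity of $\om$, and the top derivative $\om^{(j-1)}=\nu_j$ lies in $L^1_{\LT{},\Sigma}(\R)$). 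Hence $\LT{\nu_j}(s)=s^{j-1}\LT\om(s)$ on $\Sigma+i\R$. Putting $\hat\om=\LT\om|_\Sigma$ yields $\spn{\LT{\nu_j}(s)}_{j=1}^n=\spn{s^{j-1}\hat\om(s)}_{j=1}^n$, which is $n$-dimensional (unless $\hat\om\equiv 0$, a degenerate case one disposes of separately). Since each $\nu_j$ is a linear combination of $v_1,\dots,v_n$, the inclusion $\spn{\LT{\nu_j}(s)}_{j=1}^n\subset\spn{\LT{v_j}(s)}_{j=1}^n$ holds pointwise on $\Sigma$, and the latter space has dimension at most $n$; equality follows.

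For $ii)\Rightarrow i)$: choose $\nu_j\in\spn{v_j}_{j=1}^n$ with $\LT{\nu_j}(s)=s^{j-1}\hat\om(s)$ on $\Sigma$. From $\LT{\nu_n}(s)=s^{n-1}\LT{\nu_1}(s)$, Proposition~\ref{LT_DIFF} (with $f=\nu_1$, $r=n-1$, $g=\nu_n$) produces $\om\in C^{n-2}(\R)$ with $\om^{(n-2)}\in AC_{\text{loc}}(\R)$, $\nu_1=\om$ a.e.\ and $\nu_n=\om^{(n-1)}$ a.e. Then I would bootstrap by induction on $j\in\{1,\dots,n-1\}$: assuming $\nu_j=\om^{(j-1)}$ a.e., the identity $\LT{\nu_{j+1}}(s)=s\,\LT{\om^{(j-1)}}(s)$ together with Proposition~\ref{LT_DIFF} for $r=1$ gives $F\in AC_{\text{loc}}(\R)$ with $F=\om^{(j-1)}$ a.e.\ and $\nu_{j+1}=F'$ a.e.; since $j-1\le n-2$, both $\om^{(j-1)}$ and $F$ are continuous, so $F=\om^{(j-1)}$ everywhere and $\nu_{j+1}=\om^{(j)}$ a.e. Finally $\spn{\nu_j}_{j=1}^n\subset\spn{v_j}_{j=1}^n$, and the former is $n$-dimensional (otherwise applying the Laplace transform contradicts the $n$-dimensionality of $\spn{s^{j-1}\hat\om(s)}_{j=1}^n$, the case $\hat\om\equiv 0$ being trivial), so the two spans coincide; thus $v_1,\dots,v_n$ is of additive derivative type w.r.t.\ $\om$, and $\LT\om=\LT{\nu_1}=\hat\om$ on $\Sigma$.

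The main obstacle is exactly the one already resolved in the multiplicative setting: the intermediate derivatives $\om',\dots,\om^{(n-2)}$ need not possess convergent Laplace transforms, so a direct integration-by-parts computation is unavailable; this is precisely why Proposition~\ref{LT_DIFF} is stated as an equivalence requiring only the top-order derivative to lie in $L^1_{\LT{},\Sigma}(\R)$, and once that tool is available the rest of the argument is formal. The only additive-specific point is that Proposition~\ref{LT_DIFF} requires $0\notin\Sigma$, which is already built into the standing assumption $\Sigma\subset\R_{>0}$, so no further hypothesis is needed.
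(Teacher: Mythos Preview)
Your proposal is correct and is essentially the same approach the paper takes: the paper simply says to repeat the proof of Theorem~\ref{MDT_ALG} with the Laplace transform in place of the Mellin transform and Proposition~\ref{LT_DIFF} in place of Proposition~\ref{MT_DIFF}, which is exactly what you have spelled out. Your alternative route via $\tilde f(x)=f(-\ln x)$ is also valid and is in fact how the paper packages the relationship between the two notions in the proposition immediately following this theorem.
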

\begin{proof}
   We can prove this using the same ideas as in the multiplicative setting, but using the Laplace transform instead of the Mellin transform and Proposition \ref{LT_DIFF} instead of Proposition \ref{MT_DIFF}.
\end{proof}

A side product of the previous results is the following connection between functions of additive and multiplicative derivative type.

\begin{prop}
    Consider the operator $\tilde{f}: \R_{>0}\to\C:x\mapsto f(-\ln x)$ on $f:\R\to\C$. The functions $v_1,\dots,v_n\subset L_{\LT{},\Sigma}^1(\R)$ are of additive derivative type if and only if the functions $\tilde{v}_1,\dots,\tilde{v}_n\subset L_{\MT{},\Sigma}^1(\R_{>0})$ are of multiplicative derivative type.      
\end{prop}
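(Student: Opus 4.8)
The plan is to reduce the statement to the algebraic characterizations of derivative type already established in Theorems~\ref{MDT_ALG} and~\ref{ADT_ALG}, using the elementary dictionary between the Laplace and Mellin transforms recorded in Section~\ref{PRELIM_A}. Recall from there that for $f:\R\to\C$ one has $f\in L^1_{\LT{},\Sigma}(\R)$ if and only if $\tilde{f}\in L^1_{\MT{},\Sigma}(\R_{>0})$, and that in this case $\LT{f}=\MT{\tilde{f}}$ on $\Sigma$ (equivalently on the strip $\Sigma+i\R$). Moreover $x\mapsto -\ln x$ is a bijection of $\R_{>0}$ onto $\R$, so $f\mapsto\tilde{f}$ is a bijection at the level of functions. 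In particular, the hypothesis $v_1,\dots,v_n\in L^1_{\LT{},\Sigma}(\R)$ is equivalent to $\tilde{v}_1,\dots,\tilde{v}_n\in L^1_{\MT{},\Sigma}(\R_{>0})$, so both sides of the claimed equivalence are simultaneously meaningful and $\Sigma\subset\R_{>0}$ is admissible for both transform settings.

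First I would apply Theorem~\ref{ADT_ALG} to $v_1,\dots,v_n$: they are of additive derivative type if and only if there exists $\hat{\om}:\Sigma\to\C$ with $\spn{\LT{v}_j(s)}_{j=1}^n=\spn{s^{j-1}\hat{\om}(s)}_{j=1}^n$ for all $s\in\Sigma$. Next I would apply Theorem~\ref{MDT_ALG} to $\tilde{v}_1,\dots,\tilde{v}_n$: they are of multiplicative derivative type if and only if there exists $\hat{\om}:\Sigma\to\C$ with $\spn{\MT{\tilde{v}}_j(s)}_{j=1}^n=\spn{s^{j-1}\hat{\om}(s)}_{j=1}^n$ for all $s\in\Sigma$. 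Since $\MT{\tilde{v}}_j=\LT{v}_j$ on $\Sigma$ for each $j$, these two span conditions coincide verbatim, so one equivalence holds with the same $\hat{\om}$ serving both sides, and the proposition follows.

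There is essentially no serious obstacle here: the argument is a two-line corollary of the algebraic descriptions, and the only points requiring care are that the integrability classes match (which is exactly the remark recalled above) and that $\Sigma$ has the admissible form for both transforms (which holds by the standing convention $\Sigma\subset\R_{>0}$). If one wishes, one can strengthen the statement by also tracking the generating function: combining the concluding ``in that case'' clauses of Theorems~\ref{MDT_ALG} and~\ref{ADT_ALG} shows that $v_1,\dots,v_n$ being of additive derivative type with respect to $\om$ is equivalent to $\tilde{v}_1,\dots,\tilde{v}_n$ being of multiplicative derivative type with respect to $\tilde{\om}$. A direct proof is also available via the substitution $x=e^{-t}$, under which the chain rule converts $d/dt$ into the operator $g\mapsto -x g'(x)$, so that $\om^{(j-1)}(-\ln x)=(-1)^{j-1}(x\tfrac{d}{dx})^{j-1}[\tilde{\om}(x)]$ and the spans in Definitions~\ref{ADT_def} and~\ref{MDT_def} match term by term; but the transform route is cleaner since it reuses the regularity bookkeeping already carried out in Section~\ref{DT}.
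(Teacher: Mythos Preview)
Your proposal is correct and follows exactly the same route as the paper: reduce to the algebraic characterizations in Theorems~\ref{MDT_ALG} and~\ref{ADT_ALG} and invoke the identity $\LT{f}=\MT{\tilde{f}}$ on $\Sigma$. The paper's proof is a single sentence to this effect; your additional remarks about integrability, the generating function $\om\leftrightarrow\tilde{\om}$, and the alternative chain-rule verification are all sound but go beyond what is needed.
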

\begin{proof}
    This follows immediately from the algebraic conditions in Theorem~\ref{MDT_ALG}~\&~\ref{ADT_ALG} and the fact that $ \LT{f} = \MT{\tilde{f}}$ on $\Sigma$.
\end{proof}

In what follows, we will describe some common properties of functions of multiplicative and additive derivative type, based on the algebraic description in Theorem~\ref{MDT_ALG} \& \ref{ADT_ALG}. As such, we will state all the results in terms of a linear operator $\T$ acting on functions on some set $X\subset\R$. In the next section, we will restrict again to $\T\in\{\MT{},\LT{}\}$.
\medbreak

We will first describe a necessary condition for consecutive collections of functions to be of multiplicative/additive derivative type. The previous algebraic description reveals the following structure on the transform of the underlying derivative type functions.

\begin{prop} \label{DT_ALG_FUN}
    Let $v_1,\dots,v_n:X\to\C$ and consider a linear operator $\T$ that acts on $v\in \spn{v_j}_{j=1}^n$ and $\T{v}:\Sigma\to\C$ is defined on some infinite set $\Sigma\subset\C$ and is non-vanishing. If, for all $m\in\{1,\dots,n\}$, there exists a function $\hat{\om}_m:\Sigma\to\R$ such that
        $$\spn{\T{v_j}(s)}_{j=1}^{m} = \spn{ s^{j-1} \hat{\om}_m(s)}_{j=1}^{m},\quad s\in\Sigma,$$
    then there exists $b_0,\dots,b_{n-1}\in\R$ and a function $d:\{1,\dots,n-1\}\to\{0,1\}$ such that, for all $m\in\{1,\dots,n\}$,
    $$ \hat{\om}_m(s) = \frac{b_0 \T{v_1}(s)}{\prod_{i=1}^{m-1} (s+b_i)^{d(i)}},\quad s\in\Sigma.$$
\end{prop}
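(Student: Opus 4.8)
The plan is to argue by induction on $m$, peeling off one linear factor at each step: from the fact that the level-$m$ span identity is contained in the level-$(m+1)$ one, I will read off $b_m\in\R$ and $d(m)\in\{0,1\}$ and pin down $\hat\om_{m+1}$, starting from $\hat\om_1=b_0\,\T{v_1}$. Two preliminary observations make this go smoothly. First, the displayed identity $\spn{\T{v_j}(s)}_{j=1}^m=\spn{s^{j-1}\hat\om_m(s)}_{j=1}^m$ is unchanged if $\hat\om_m$ is multiplied by a nonzero real constant, so we are free to normalize each $\hat\om_m$; we do this along the induction so that the asserted formula holds exactly. Second, each $\hat\om_m$ is non-vanishing on $\Sigma$: by linearity $\hat\om_m\in\spn{\T{v_j}}_{j=1}^m$, so $\hat\om_m=\T{w}$ for some $w\in\spn{v_j}_{j=1}^m$, and $w\ne 0$ (else $\spn{s^{j-1}\hat\om_m}_{j=1}^m=\{0\}$, whereas it equals $\spn{\T{v_j}}_{j=1}^m\ni\T{v_1}\ne 0$, assuming as we may that $v_1\ne 0$), whence $\T{w}$ has no zeros on $\Sigma$ by hypothesis. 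Since $\Sigma$ is infinite, this lets us identify $\spn{s^{j-1}\hat\om_m(s)}_{j=1}^m$ with $\{q(s)\hat\om_m(s):q\in\C[s],\ \deg q\le m-1\}$ as a space of functions on $\Sigma$, and it makes any polynomial that agrees with a ratio $\hat\om_m/\hat\om_{m'}$ on $\Sigma$ unique.

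For $m=1$ the identity says $\spn{\T{v_1}(s)}=\spn{\hat\om_1(s)}$, so $\hat\om_1=b_0\,\T{v_1}$ for a nonzero constant $b_0$, matching the formula (the empty product being $1$). Assume the formula at some $m\in\{1,\dots,n-1\}$, i.e.\ $\hat\om_m(s)=b_0\,\T{v_1}(s)\big/\prod_{i=1}^{m-1}(s+b_i)^{d(i)}$. From $\spn{\T{v_j}}_{j=1}^m\subseteq\spn{\T{v_j}}_{j=1}^{m+1}$ and the hypothesis, $\spn{s^{j-1}\hat\om_m(s)}_{j=1}^m\subseteq\spn{s^{j-1}\hat\om_{m+1}(s)}_{j=1}^{m+1}$; in particular $\hat\om_m\in\spn{s^{j-1}\hat\om_{m+1}(s)}_{j=1}^{m+1}$, so $\hat\om_m=p\,\hat\om_{m+1}$ on $\Sigma$ for a unique polynomial $p\not\equiv 0$ with $\deg p\le m$.

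The crux is the sharper bound $\deg p\le 1$. For every $j\in\{1,\dots,m\}$ the function $s^{j-1}p(s)\,\hat\om_{m+1}(s)=s^{j-1}\hat\om_m(s)$ lies in $\spn{s^{j-1}\hat\om_{m+1}(s)}_{j=1}^{m+1}=\{q\hat\om_{m+1}:\deg q\le m\}$, so $s^{j-1}p(s)$ is a polynomial of degree $\le m$; with $j=m$ this reads $(m-1)+\deg p\le m$, i.e.\ $\deg p\le 1$. If $\deg p=0$, put $d(m):=0$ and $b_m:=0$ (its value is irrelevant); after rescaling $\hat\om_{m+1}$ by the constant $p$ we get $\hat\om_{m+1}=\hat\om_m=b_0\,\T{v_1}\big/\prod_{i=1}^{m}(s+b_i)^{d(i)}$ since $(s+b_m)^0=1$. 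If $\deg p=1$, write $p(s)=\alpha(s+b_m)$ with $\alpha\ne 0$; after rescaling $\hat\om_{m+1}$ by $\alpha$ we may take $p(s)=s+b_m$, so $\hat\om_{m+1}(s)=\hat\om_m(s)/(s+b_m)=b_0\,\T{v_1}(s)\big/\prod_{i=1}^{m}(s+b_i)^{d(i)}$ with $d(m):=1$. Here $b_m\in\R$ because $p=\hat\om_m/\hat\om_{m+1}$ is real-valued on $\Sigma$, which forces the coefficients of the degree-$\le 1$ polynomial $p$ to be real as soon as $\Sigma$ meets $\R$ in at least two points; in the intended applications $\T{}\in\{\MT{},\LT{}\}$ with $\Sigma$ a real interval, so this is automatic. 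This completes the induction.

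The only genuinely non-routine point is the degree estimate $\deg p\le 1$; everything else is bookkeeping plus the harmless rescalings of the $\hat\om_m$. That estimate carries the rigidity of derivative-type families: adding one function to the chain lets the denominator gain at most one extra linear factor, which is precisely what produces the product form for $\hat\om_m$ and, once $\T{}$ is specialized to the Mellin or Laplace transform in the sequel, the explicit gamma- and exponential-type expressions in \eqref{MOPE_MDT_w} and \eqref{MOPE_ADT_om}.
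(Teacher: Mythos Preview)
Your proof is correct and follows essentially the same inductive scheme as the paper: both reduce to showing that the ratio $\hat\om_m/\hat\om_{m+1}$ is a polynomial of degree at most one. Your degree bound via $s^{m-1}\hat\om_m\in\{q\,\hat\om_{m+1}:\deg q\le m\}$ is slightly more direct than the paper's, which instead introduces two auxiliary elements $\nu_1,\nu_2$ at the lower level and compares their representations at the higher level to reach the same conclusion.
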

\begin{proof}
    We will show this by induction on $m\in\{1,\dots,n\}$. For $m=1$, this is immediate. Let $m\in\{1,\dots,n-1\}$ and suppose that the formula holds for all integers $<m$. By the assumption, we know that for every $\nu\in \spn{v_j}_{j=1}^{m}$, there exists a polynomial $p_{m-1}(s;\nu)$ of degree at most $m-1$ such that
        $$ \T{\nu}(s) = p_{m-1}(s;\nu) \hat{\om}_{m}(s),\quad s\in\Sigma. $$
    Note that both $\T{\nu},\hat{\om}_{m}\in\spn{\T{v_j}}_{j=1}^{m}$ so that they are non-vanishing on $\Sigma$. In that case, the same holds for $\Sigma\to\C:s\mapsto p_{m-1}(s;\nu)$. Hence, for every $\nu\in \spn{v_j}_{j=1}^{m}$, we must have that
        $$ \hat{\om}_{m}(s) = \frac{\T{\nu}(s)}{p_{m-1}(s;\nu)},\quad s\in\Sigma. $$
    By considering appropriate functions $\nu$, we can establish the desired formula. Consider first $\nu_1\in \spn{v_j}_{j=1}^{m-1}$ with $ \T{\nu_1}(s) = s^{d_0} \prod_{i=1}^{m-2} (s+b_i)^{d(i)} \hat{\om}_{m-1}(s) $ where $d_0 = \#\{1\leq i\leq n-2 : d(i) = 0 \}$. Since the degree of $s^{d_0} \prod_{i=1}^{m-2} (s+b_i)^{d(i)}$ is at most $m-2$, such $\nu_1$ exists by the assumption. By the induction hypothesis, we then have
        $$ \T{\nu_1}(s) = b_0 s^{d_0} \T{v_1}(s),\quad s\in\Sigma, $$
    and thus
        $$ \hat{\om}_{m}(s) = \frac{b_0 s^{d_0} \T{v_1}(s)}{p_{m-1}(s;\nu_1)},\quad s\in\Sigma, $$
    for infinitely many $s\in\Sigma$. On the other hand, by the assumption, we can also consider $\nu_2\in \spn{v_j}_{j=1}^{m-1}$ with $ \T{\nu_2}(s) = \hat{\om}_{m-1}(s) $. The induction hypothesis then implies that    
        $$ \T{\nu_2}(s) = \frac{b_0 \T{w_1}(s)}{\prod_{i=1}^{m-2}(s+b_i)^{d(i)}},\quad s\in\Sigma, $$
    and thus
        $$ \hat{\om}_{m}(s) = \frac{b_0 \T{v_1}(s)}{p_{m-1}(s;\nu_2) \prod_{i=1}^{m-2}(s+b_i)^{d(i)}},\quad s\in\Sigma. $$
    By combining the two identities, we obtain
        $$ p_{m-1}(s;\nu_1) = p_{m-1}(s;\nu_2) s^{d_0} \prod_{i=1}^{m-2}(s+b_i)^{d(i)},\quad s\in\Sigma.$$ 
    In fact, since $\Sigma$ contains infinitely many points, we can view this as a formal identity between the polynomials. As the degree of $s^{d_0} \prod_{i=1}^{m-2}(s+b_i)^{d(i)}$ is exactly $m-2$, the degree of $p_{m-1}(s;\nu_2)$ is at most $1$. We can therefore write $p_{m-1}(s;\nu_2)=(s+b_{n-1})^{d(m-1)}$ so that
        $$ \hat{\om}_{m}(s) = \frac{\T{v_1}(s)}{\prod_{i=1}^{m-1}(s+b_i)^{d(i)}}, \quad s\in\Sigma, $$
    as desired.
\end{proof}

The previous result also allows us to uncover the structure of the transforms of the initial functions $v_1,\dots,v_n$.

\begin{prop} \label{DT_ALG}
    Let $v_1,\dots,v_n:X\to\C$ and consider a linear operator $\T$ that acts on $v\in \spn{v_j}_{j=1}^n$ and $\T{v}:\Sigma\to\C$ is defined on some infinite set $\Sigma\subset\C$ and is non-vanishing. Then the following are equivalent: 
    \begin{itemize}
        \item[$i)$] for all $m\in\{1,\dots,n\}$, there exists a function $\hat{\om}_m:\Sigma\to\C$ such that
        $$\spn{\T{v_j}(s)}_{j=1}^{m} = \spn{ s^{k-1} \hat{\om}_m(s)}_{k=1}^{m},\quad s\in\Sigma,$$
        \item[$ii)$] there exists $p_0,\dots,p_{N-1}\in\R[s]$, $b_1,\dots,b_{N-1}\in\R$ and $d:\{1,\dots,N-1\}\to\{0,1\}$ such that, for all $n\in\{1,\dots,N\}$, such that, for all $m\in\{1,\dots,n\}$,
        $$ \frac{\T{v_m}(s)}{\T{v_1}(s)} = \frac{p_{m-1}(s)}{\prod_{i=1}^{m-1}(s+b_i)^{d(i)}},\quad s\in\Sigma, $$
    and $\max\{\deg p_{m-1}(s),\deg \prod_{i=1}^{m-1}(s+b_i)^{d(i)}\}=m-1$.
    \end{itemize}
    In that case, for all $m\in\{1,\dots,n\}$,
    $$ \T{\om_m}(s) = \frac{\T{v_1}(s)}{\prod_{i=1}^{m-1} (s+b_i)^{d(i)}},\quad s\in\Sigma.$$
\end{prop}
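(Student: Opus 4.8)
The plan is to reduce the whole statement to Proposition~\ref{DT_ALG_FUN} together with some linear algebra over $\R[s]$; I read the statement with the fixed $n$ in place of $N$, so the claim is that there exist $p_0,\dots,p_{n-1}\in\R[s]$, $b_1,\dots,b_{n-1}\in\R$ and $d:\{1,\dots,n-1\}\to\{0,1\}$ such that the displayed identity and the degree condition hold for every $m\in\{1,\dots,n\}$. For $i)\Rightarrow ii)$ I would first apply Proposition~\ref{DT_ALG_FUN} to $v_1,\dots,v_n$: since each $\T{v_j}$ is non-vanishing on the infinite set $\Sigma$, it produces reals $b_0,\dots,b_{n-1}$ and a map $d:\{1,\dots,n-1\}\to\{0,1\}$ with $\hat{\om}_m(s)=b_0\,\T{v_1}(s)\big/\prod_{i=1}^{m-1}(s+b_i)^{d(i)}$ for all $s\in\Sigma$ and $m\le n$; taking $m=1$ forces $b_0\neq 0$. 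Because $\T{v_m}$ lies in $\spn{\T{v_j}(s)}_{j=1}^{m}=\spn{s^{k-1}\hat{\om}_m(s)}_{k=1}^{m}$ (an $m$-dimensional space, as $\hat{\om}_m$ is non-vanishing on the infinite $\Sigma$), there is $q_{m-1}\in\R[s]$ with $\deg q_{m-1}\le m-1$ and $\T{v_m}=q_{m-1}\hat{\om}_m$ on $\Sigma$; dividing by $\T{v_1}$ and setting $p_{m-1}:=b_0 q_{m-1}$ yields the displayed rational form, with $p_0$ a non-zero constant and $\deg p_{m-1}\le m-1$. The ``In that case'' formula is then just $\hat{\om}_m$ with $b_0$ rescaled to $1$, which is legitimate since a derivative-type generator is defined only up to a non-zero scalar.

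The remaining point in $i)\Rightarrow ii)$ is the equality $\max\{\deg p_{m-1},\,\sum_{i=1}^{m-1}d(i)\}=m-1$. For this I would divide the identity $\spn{\T{v_j}(s)}_{j=1}^{m}=\hat{\om}_m(s)\cdot\{q\in\R[s]:\deg q\le m-1\}$ through by the non-vanishing $\hat{\om}_m$: using $\T{v_j}/\hat{\om}_m=b_0^{-1}p_{j-1}\prod_{i=j}^{m-1}(s+b_i)^{d(i)}$ one sees that the $m$ polynomials $g_j:=p_{j-1}\prod_{i=j}^{m-1}(s+b_i)^{d(i)}$, $j=1,\dots,m$, form a basis of the space of polynomials of degree $\le m-1$; after reduction their degrees must exhaust $\{0,\dots,m-1\}$, so $\max_j\deg g_j=m-1$. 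To push this to the two endpoints $j=m$ (which contributes $\deg p_{m-1}$) and $j=1$ (which contributes $\sum_{i=1}^{m-1}d(i)$), I would induct on $m$, splitting on $d(m-1)$: if $d(m-1)=0$ the polynomials $g_1,\dots,g_{m-1}$ coincide with the level-$(m-1)$ family, which by the inductive hypothesis spans $\{\deg\le m-2\}$, forcing $\deg g_m=\deg p_{m-1}=m-1$; if $d(m-1)=1$ they span the hyperplane of degree-$\le(m-1)$ polynomials vanishing at $-b_{m-1}$, so $g_m=p_{m-1}$ cannot vanish there, and comparing with the level-$(m-1)$ degree data again pins down the claimed maximum.

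For $ii)\Rightarrow i)$ I would run the same computation in reverse: put $\hat{\om}_m(s):=\T{v_1}(s)\big/\prod_{i=1}^{m-1}(s+b_i)^{d(i)}$, so that for $j\le m$ the hypothesis gives $\T{v_j}=\hat{\om}_m\cdot g_j$ with $g_j$ as above, whence $\spn{\T{v_j}(s)}_{j=1}^{m}=\hat{\om}_m(s)\cdot\spn{g_j(s)}_{j=1}^{m}$; it then suffices to show $\spn{g_j}_{j=1}^{m}=\{q\in\R[s]:\deg q\le m-1\}$, and the ``In that case'' identity is immediate. This I would prove by induction on $m$: the polynomials $g_1,\dots,g_{m-1}$ equal $(s+b_{m-1})^{d(m-1)}$ times the level-$(m-1)$ family, which by induction spans $\{\deg\le m-2\}$; the degree condition then ensures that adjoining $g_m=p_{m-1}$ completes the basis — because $\deg p_{m-1}=m-1$ when $d(m-1)=0$, and $p_{m-1}$ does not vanish at $-b_{m-1}$ when $d(m-1)=1$ (otherwise $p_{m-1}/\prod_{i=1}^{m-1}(s+b_i)^{d(i)}$ would reduce to a representation already available at level $m-1$, contradicting the degree condition at level $m$).

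The genuinely delicate step, and the one I expect to be the main obstacle, is this degree bookkeeping: one has to control how the numerator degree $\deg p_{m-1}$ and the denominator degree $\sum_{i=1}^{m-1}d(i)$ evolve when passing from level $m-1$ to level $m$, and to verify, in each of the two cases $d(m-1)\in\{0,1\}$ and simultaneously for the forward and backward directions, that the family $\{g_j\}_{j=1}^m$ remains a basis of exactly the polynomial space $\{\deg\le m-1\}$. This is essentially where the combinatorial shape of the parameters (the interlacing of the linear factors $(s+b_i)^{d(i)}$ with the pure powers $s^{j-1}$, as in the alternative description with the surjective index map $\sigma$ in Remark~\ref{MDT_MR_ALT}) is being reconstructed, and it is the only part of the argument that is not purely formal; everything else — the appeal to Proposition~\ref{DT_ALG_FUN}, the division by $\hat{\om}_m$, and the final scalar normalisation — is routine.
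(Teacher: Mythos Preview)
Your strategy matches the paper's: for $i)\Rightarrow ii)$ invoke Proposition~\ref{DT_ALG_FUN} to obtain $\hat\omega_m(s)=b_0\,\T{v_1}(s)/\prod_{i=1}^{m-1}(s+b_i)^{d(i)}$, read off $p_{m-1}$, and then verify the degree equality; for $ii)\Rightarrow i)$ induct on $m$ splitting on $d(m-1)\in\{0,1\}$. The paper packages the degree step as a ``smallest offending $m$'' partial-fraction contradiction and proves $ii)\Rightarrow i)$ by checking both inclusions directly rather than through your basis $\{g_j\}$, but the substance --- in particular the case split on $d(m-1)$ --- is the same.

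Your instinct that the degree bookkeeping is the crux is right, and the sketch you give does not actually close the case $d(m-1)=1$ with $\sum_{i\le m-2}d(i)<m-2$. Take $n=3$, $\T{v_1}=f$, $\T{v_2}=sf$, $\T{v_3}=f/(s+b)$ with $f$ nowhere zero on $\Sigma$: then $i)$ holds with $\hat\omega_1=\hat\omega_2=f$, $\hat\omega_3=f/(s+b)$, forcing $d(1)=0$, $d(2)=1$, $p_2$ a nonzero constant, and $\max\{\deg p_2,\,d(1)+d(2)\}=1\ne 2$. Your induction delivers only $p_2(-b)\ne 0$, which is true but does not give the degree equality; ``comparing with the level-$(m-1)$ degree data'' adds nothing here, since at level $2$ it was $\deg p_1$, not the denominator degree, that reached $m-2$. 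The paper's proof shares this gap: at the smallest offending $m$ it asserts $\prod_{i=1}^{m-1}(s+b_i)^{d(i)}=\prod_{i=1}^{m-2}(s+b_i)$, which is false in this example, so its partial-fraction contradiction does not fire. Thus at the stated generality neither argument can be completed without an extra hypothesis (e.g.\ an a priori linear-independence constraint of the kind supplied downstream by the $\mathcal{W}(n;w_1,\dots,w_r)$ structure).
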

\begin{proof}
    We will first prove the implication from $i)$ to $ii)$. It follows from Proposition \ref{DT_ALG_FUN} that there exists $b_0,\dots,b_{n-1}\in\R$ and a function $d:\{1,\dots,n-1\}\to\{0,1\}$ such that, for all $m\in\{1,\dots,n\}$,
    $$ \hat{\om}_m(s) = \frac{b_0 \T{v_1}(s)}{\prod_{i=1}^{m-1} (s+b_i)^{d(i)}},\quad s\in\Sigma.$$
    In that case, for all $m\in\{1,\dots,n\}$, there exists a polynomial $p_{m-1}\in\R[s]$ of degree at most $m-1$ such that
        $$ \frac{\T{v_m}(s)}{\T{v_1}(s)} = \frac{p_{m-1}(s)}{\prod_{i=1}^{m-1}(s+b_i)^{d(i)}},\quad s\in\Sigma.$$
    We have to show that the degree conditions are satisfied. Consider the smallest value $m\in\{1,\dots,n\}$ for which 
    $$\max\{\deg p_{m-1}(s),\deg \prod_{i=1}^{m-1}(s+b_i)^{d(i)}\}<m-1.$$ 
    In that case, $\deg p_{m-1} \leq m-2$ and $\prod_{i=1}^{m-1}(s+b_i)^{d(i)} = \prod_{i=1}^{m-2}(s+b_i)$. A partial fraction decomposition then gives
    $$ \frac{\T{v_m}(s)}{\T{v_1}(s)} = \sum_{j=1}^{m-1} \frac{c_j}{\prod_{i=1}^{j-1}(s+b_i)^{d(i)}},\quad s\in\Sigma. $$
    This implies that $\T{v_m} \in \spn{\T{v_j}}_{j=1}^{m-1}$, which contradicts the fact that 
    $$\dim\spn{\T{v_j}(s)}_{j=1}^{m}=\dim\spn{s^{k-1}\hat{\om}_m(s)}_{j=1}^{m}=m.$$
    
    We will now show that the implication from $ii)$ to $i)$ holds. We will show $i)$ by induction on $m\in\{1,\dots,n\}$ using the stated expression for $\om_m$. For $m=1$, the desired result is immediate. Let $m\in\{1,\dots,n\}$ and suppose that $i)$ holds for all integers $<m$. We have to show that
        $$\spn{\T{v_j}(s)}_{j=1}^{m} = \spn{ s^{k-1} \hat{\om}_m(s)}_{k=1}^{m},\quad s\in\Sigma.$$
    We will first prove the inclusion from left to right. Let $v\in\spn{v_j}_{j=1}^m$ so that $ \T{v} = \sum_{j=1}^m c_j \T{v}_j $. Then by the induction hypothesis and $ii)$, we can write
        $$ \T{v}(s) = \T{v_1}(s) \left( \frac{q_{m-2}(s)}{\prod_{i=1}^{m-2} (s+b_i)^{d(i)}} + c_m \frac{p_{m-1}(s)}{\prod_{i=1}^{m-1} (s+b_i)^{d(i)}}\right),\quad s\in\Sigma, $$
    in terms of some polynomial $q_{m-2}(s)$ of degree at most $m-2$. After writing both terms on the common denominator $\prod_{i=1}^{m-1} (s+b_i)^{d(i)}$, we can conclude that $\T{v}(s)\in\spn{s^{k-1}\hat{\om}_m(s)}_{k=1}^m$. \\    
    For the implication from right to left, let $q_{m-1}(s)$ be a polynomial of degree at most $m-1$ and consider $\T{v}(s) = q_{m-1}(s) \hat{\om}_m(s)$. We will show that there exists $c\in\R$ such that 
    $$(q_{m-1}(s)-cp_{m-1}(s))\hat{\om}_m(s) \in\spn{s^{k-1}\hat{\om}_{m-1}(s)}_{k=1}^{m-1},\quad s\in\Sigma.$$ 
    In that case, $\T{v} \in\spn{\T{v_j}}_{j=1}^{m}$, because  
    $$(q_{m-1}(s)-cp_{m-1}(s))\hat{\om}_m(s)\in \spn{\T{v_j}(s)}_{j=1}^{m-1},\quad s\in\Sigma,$$
    by the induction hypothesis, and $ p_{m-1}(s)\hat{\om}_m(s) = \T{v_m}(s) $ by $ii)$. Suppose first that $d(m-1)=1$ and thus 
    $$\hat{\om}_m(s)=\frac{\hat{\om}_{m-1}(s)}{s+b_{m-1}},\quad s\in\Sigma.$$
    Since $p_{m-1}(-b_{m-1}) \neq 0$ (otherwise the degree condition would be violated), we can take $c\in\R$ such that $q_{m-1}(s)-cp_{m-1}(s)$ vanishes at $s=-b_{m-1}$. In that case, we have
    $$(q_{m-1}(s)-cp_{m-1}(s))\hat{\om}_m(s) \in\spn{s^{k-1}\hat{\om}_{m-1}(s)}_{k=1}^{m-1}.$$
    Suppose now that $d(m-1)=0$ and thus 
    $$\hat{\om}_{m}=\hat{\om}_{m-1},\quad s\in\Sigma.$$ 
    Under this condition we have $\deg p_{m-1}=m-1$ (otherwise the degree condition would be violated) and we can take $c\in\R$ such that the degree of $q_{m-1}(s)-cp_{m-1}(s)$ is at most $m-2$. In that case, we have 
    $$(q_{m-1}(s)-cp_{m-1}(s))\hat{\om}_m(s) \in\spn{s^{k-1}\hat{\om}_{m-1}(s)}_{k=1}^{m-1},$$
    as desired.
\end{proof}

By taking appropriate linear combinations of $v_1,\dots,v_n$, we can slightly simplify the structure of the corresponding ratios $\T{v_n}/\T{v_1}$.

\begin{prop} \label{DT_ALG_COMP}  
    In the setting of Proposition \ref{DT_ALG}, there exists an invertible upper-triangular matrix $U$ such that the weights defined by $(v_{0,j})_{j=1}^{n} = (v_j)_{j=1}^{n} U$ satisfy, for all $m\in\{1,\dots,n\}$,
        $$ \frac{\T{v_{0,m}}(s)}{\T{v_{0,1}}(s)} = \frac{s^{n-1}}{\prod_{i=1}^{m-1}(s+b_i)^{d(i)}},\quad s\in\Sigma.$$
\end{prop}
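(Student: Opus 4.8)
The plan is to construct $U$ column by column. Since an upper-triangular matrix is invertible exactly when every diagonal entry is nonzero, and upper-triangularity forces $v_{0,m}=\sum_{j=1}^{m}U_{jm}v_j$ to involve only $v_1,\dots,v_m$, it suffices to produce, separately for each $m\in\{1,\dots,n\}$, coefficients $U_{1m},\dots,U_{mm}$ with $U_{mm}\neq 0$ so that $v_{0,m}:=\sum_{j=1}^{m}U_{jm}v_j$ satisfies $\T{v_{0,m}}(s)/\T{v_{0,1}}(s)=s^{m-1}/D_{m-1}(s)$, where $D_{m-1}(s):=\prod_{i=1}^{m-1}(s+b_i)^{d(i)}$ and $b_i,d(i)$ are the data produced by Proposition~\ref{DT_ALG}. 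The case $m=1$ forces $U_{11}=1$ (since $\T{v_1}/\T{v_1}=1=s^0/D_0$), hence $v_{0,1}=v_1$. Substituting $\T{v_j}(s)/\T{v_1}(s)=p_{j-1}(s)/D_{j-1}(s)$ from Proposition~\ref{DT_ALG} and clearing the common denominator $D_{m-1}$ (a polynomial multiple of every $D_{j-1}$ with $j\le m$), the requirement for column $m$ becomes the polynomial identity
\[
 \sum_{j=1}^{m}U_{jm}\,\tilde h_j(s)=s^{m-1},\qquad \tilde h_j(s):=p_{j-1}(s)\prod_{i=j}^{m-1}(s+b_i)^{d(i)}=D_{m-1}(s)\,\frac{\T{v_j}(s)}{\T{v_1}(s)}
\]
(so $\tilde h_m=p_{m-1}$), from whose solution the $m$-th column of $U$ is read off.

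Next I would show that $\tilde h_1,\dots,\tilde h_m$ is a basis of the $m$-dimensional space of polynomials of degree at most $m-1$, so that the identity above is uniquely solvable. For the degrees, $\deg\tilde h_j=\deg p_{j-1}+\sum_{i=j}^{m-1}d(i)\le(j-1)+(m-j)=m-1$, using the bound $\deg p_{j-1}\le j-1$ contained in the hypothesis of Proposition~\ref{DT_ALG}. For linear independence, a relation $\sum_j\lambda_j\tilde h_j\equiv 0$, restricted to the infinite subset of $\Sigma$ where $D_{m-1}\neq 0$ and divided there by $D_{m-1}$, yields $\sum_j\lambda_j\T{v_j}=0$; since $\spn{\T{v_k}}_{k=1}^{m}=\spn{s^{k-1}\hat\om_m(s)}_{k=1}^{m}$ with $\hat\om_m$ non-vanishing, any element of this span that vanishes on an infinite set vanishes throughout $\Sigma$, whence $\dim\spn{\T{v_j}}_{j=1}^{m}=m$ (Proposition~\ref{DT_ALG}) gives all $\lambda_j=0$.

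The remaining step — which I expect to be the main obstacle — is to verify that the diagonal entry $U_{mm}$ is nonzero, equivalently $s^{m-1}\notin\spn{\tilde h_1,\dots,\tilde h_{m-1}}$; this is where the degree hypothesis of Proposition~\ref{DT_ALG} is genuinely used. If $d(m-1)=0$, then $\prod_{i=j}^{m-1}(s+b_i)^{d(i)}$ involves only $d(j),\dots,d(m-2)$, so $\deg\tilde h_j\le(j-1)+\sum_{i=j}^{m-2}d(i)\le m-2$ for every $j<m$ and $s^{m-1}$ cannot lie in their span. If $d(m-1)=1$, then $(s+b_{m-1})$ divides $D_{m-1}/D_{j-1}$, hence $\tilde h_j$, for every $j<m$, so $s^{m-1}\in\spn{\tilde h_1,\dots,\tilde h_{m-1}}$ would force $(s+b_{m-1})\mid s^{m-1}$, i.e.\ $b_{m-1}=0$; but with $b_{m-1}=0$ the target $s^{m-1}/D_{m-1}$ equals $s^{m-2}/D_{m-2}$, i.e.\ the level-$(m-1)$ target, forcing $v_{0,m}=v_{0,m-1}$ and contradicting invertibility, so this configuration does not arise under the hypotheses. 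Hence $U_{mm}\neq 0$ for all $m$, $U$ is invertible and upper-triangular, and $(v_{0,j})_{j=1}^{n}=(v_j)_{j=1}^{n}U$ has the asserted form; apart from the $d(m-1)=1$ dichotomy, the argument is routine linear algebra.
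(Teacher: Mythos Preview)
Your overall strategy matches the paper's, but you do considerably more work: the paper's proof is two sentences, observing that $s^{m-1}\hat{\om}_m(s)\in\spn{s^{k-1}\hat{\om}_m(s)}_{k=1}^{m}=\spn{\T{v_j}(s)}_{j=1}^{m}$ by Proposition~\ref{DT_ALG}, so one may take $v_{0,m}\in\spn{v_j}_{j=1}^{m}$ with $\T{v_{0,m}}=s^{m-1}\hat{\om}_m$, and then substitutes the explicit formula for $\hat{\om}_m$. The paper does not explicitly verify that the resulting upper-triangular matrix has nonzero diagonal; you are right to raise this and your basis argument via the $\tilde h_j$ is a reasonable way to approach it.

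However, your resolution of the subcase $d(m-1)=1$, $b_{m-1}=0$ is circular. You argue that this configuration would force $v_{0,m}=v_{0,m-1}$, ``contradicting invertibility, so this configuration does not arise under the hypotheses'' --- but invertibility of $U$ is precisely the conclusion you are trying to establish, not a hypothesis you may invoke. Nothing in the hypotheses of Proposition~\ref{DT_ALG} as stated obviously excludes $b_{m-1}=0$: the degree condition $\max\{\deg p_{m-1},\deg D_{m-1}\}=m-1$ is still satisfied, and the non-vanishing hypothesis on $\T{v}$ only yields $-b_{m-1}\notin\Sigma$, which does not force $b_{m-1}\neq 0$ since $0\notin\Sigma$ in the Mellin/Laplace applications. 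To close the gap cleanly you would need either a direct argument ruling out $b_{m-1}=0$ from the structure of Proposition~\ref{DT_ALG_FUN}, or to note (as the paper implicitly does) that for the intended applications one may always pass to a representation of the data with $b_i\neq 0$.
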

\begin{proof}
    It follows immediately from Proposition \ref{DT_ALG} that there exists $v_{0,m}\in\spn{v_j}_{j=1}^m$ such that $\T{v_{0,m}}(s) = s^{m-1} \hat{\om}_m(s)$ for all $s\in\Sigma$. Then we can use the stated expression for $\hat{\om}_m(s)$.
\end{proof}

With the result below, we aim to provide some more intuition on Proposition~\ref{DT_ALG}. It essentially states that the $m$-dependence in $\hat{\om}_m(s)$ can be removed if besides multiplying $\T{v_1}$ by $s$, we are also allowed to multiply with some $1/(s+b_j)$.

\begin{prop} \label{DT_ALG_CONSEC}
    In the setting of Proposition \ref{DT_ALG}, there exists an invertible upper-triangular matrix $U$ such that the weights defined by $(v_{0,j})_{j=1}^{n} = (v_j)_{j=1}^{n} U$ satisfy, for all $m\in\{1,\dots,n\}$,
        $$ \frac{\T{v}_{m+1}(s)}{\T{v}_m(s)} = \begin{dcases}
            s,\quad d(m)=0,\\
            \frac{1}{s+b_m},\quad d(m) = 1,
        \end{dcases}\qquad s\in\Sigma. $$
\end{prop}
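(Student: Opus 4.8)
The plan is to combine Proposition~\ref{DT_ALG} with a telescoping argument: once we know, after a suitable upper-triangular change of basis, that $\T{v_m}(s)/\T{v_1}(s) = p_{m-1}(s)/\prod_{i=1}^{m-1}(s+b_i)^{d(i)}$ with the sharp degree condition, we want to choose the $v_{0,m}$ so that \emph{consecutive} ratios $\T{v_{0,m+1}}(s)/\T{v_{0,m}}(s)$ are as simple as the claim demands. First I would apply Proposition~\ref{DT_ALG_COMP} to pass to a basis $(v_{0,j})_{j=1}^n = (v_j)_{j=1}^n U_0$ for which
$$ \frac{\T{v_{0,m}}(s)}{\T{v_{0,1}}(s)} = \frac{s^{m-1}}{\prod_{i=1}^{m-1}(s+b_i)^{d(i)}},\quad s\in\Sigma, $$
for every $m\in\{1,\dots,n\}$. (Note: the statement of Proposition~\ref{DT_ALG_COMP} writes $s^{n-1}$ in the numerator, but the proof produces $s^{m-1}$; I would use the latter, correct, form.) From this formula the consecutive ratio is immediate:
$$ \frac{\T{v_{0,m+1}}(s)}{\T{v_{0,m}}(s)} = \frac{s^{m}}{s^{m-1}} \cdot \frac{\prod_{i=1}^{m-1}(s+b_i)^{d(i)}}{\prod_{i=1}^{m}(s+b_i)^{d(i)}} = \frac{s}{(s+b_m)^{d(m)}},\quad s\in\Sigma, $$
which is exactly $s$ when $d(m)=0$ and $1/(s+b_m)$ when $d(m)=1$. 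Since composing two invertible upper-triangular matrices yields an invertible upper-triangular matrix, the change of basis $U$ we need is just $U_0$ itself, and we are done.

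Alternatively, to keep the argument self-contained and not rely on the precise normalization produced inside the proof of Proposition~\ref{DT_ALG_COMP}, I would instead invoke Proposition~\ref{DT_ALG} directly: it gives $b_1,\dots,b_{n-1}\in\R$, $d:\{1,\dots,n-1\}\to\{0,1\}$, and functions $\om_m$ with $\T{\om_m}(s) = \T{v_1}(s)/\prod_{i=1}^{m-1}(s+b_i)^{d(i)}$, together with $\spn{\T{v_j}(s)}_{j=1}^m = \spn{s^{k-1}\T{\om_m}(s)}_{k=1}^m$. I would then pick $v_{0,m}\in\spn{v_j}_{j=1}^m$ with $\T{v_{0,m}}(s) = s^{m-1}\T{\om_m}(s)$; such $v_{0,m}$ exists because the top-degree element of the span on the right is attainable, and the coordinate change $v_m \mapsto v_{0,m}$ is upper-triangular because $v_{0,m}$ is a combination of $v_1,\dots,v_m$ only (and the coefficient of $v_m$ is nonzero, by the sharp degree condition $\max\{\deg p_{m-1},\deg\prod(s+b_i)^{d(i)}\}=m-1$, which forces $v_{0,m}\notin\spn{v_j}_{j=1}^{m-1}$, so $U$ is invertible). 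The consecutive-ratio computation is then the same two-line cancellation as above.

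There is no real obstacle here; the result is essentially a bookkeeping corollary of Proposition~\ref{DT_ALG}. The one point deserving a little care is the invertibility of the upper-triangular matrix $U$: one must check that the diagonal entries are nonzero, i.e. that each $v_{0,m}$ genuinely involves $v_m$. This is where the degree condition $\max\{\deg p_{m-1}(s),\deg\prod_{i=1}^{m-1}(s+b_i)^{d(i)}\}=m-1$ in Proposition~\ref{DT_ALG}~$ii)$ is used: it guarantees $\dim\spn{\T{v_j}(s)}_{j=1}^m = m$, hence $\T{v_{0,m}}\notin\spn{\T{v_j}(s)}_{j=1}^{m-1}$, hence $v_{0,m}\notin\spn{v_j}_{j=1}^{m-1}$. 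The only other thing to state explicitly is that we are implicitly using $\T$ non-vanishing on the infinite set $\Sigma$ (so that the ratios are well-defined and the polynomial identities are genuine identities), which is part of the standing hypothesis of Proposition~\ref{DT_ALG} and hence of this corollary.
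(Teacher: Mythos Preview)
Your computation of the consecutive ratio after applying Proposition~\ref{DT_ALG_COMP} is correct up to
\[
\frac{\T{v_{0,m+1}}(s)}{\T{v_{0,m}}(s)} = \frac{s}{(s+b_m)^{d(m)}},
\]
but the next sentence contains a genuine slip: when $d(m)=1$ this equals $s/(s+b_m)$, \emph{not} $1/(s+b_m)$. The same formula (and hence the same error) is produced by your alternative route via $\T{v_{0,m}}(s)=s^{m-1}\T{\om_m}(s)$. So the change of basis $U_0$ coming from Proposition~\ref{DT_ALG_COMP} alone does not yield the claimed ratios, and a further upper-triangular adjustment is still required.

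The paper supplies exactly this missing step: when $d(m)=1$, perform the partial fraction decomposition $s/(s+b_m)=1-b_m/(s+b_m)$ and replace $v_{0,m+1}$ by (a nonzero scalar multiple of) $v_{0,m+1}-v_{0,m}$; the new ratio to $v_{0,m}$ is then proportional to $1/(s+b_m)$. This subtraction is upper-triangular and invertible (note $b_m\neq 0$ in this case, since $b_m=0$ would force $\T{v_{0,m+1}}=\T{v_{0,m}}$ and contradict the dimension count). Carrying these corrections out iteratively from $m=1$ upward, each time defining the new $(m{+}1)$-th function relative to the already corrected $m$-th one, and composing with $U_0$ gives the required $U$. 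Your discussion of invertibility and the role of the degree condition is fine; only the $d(m)=1$ case needs this repair.
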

\begin{proof}
    By Proposition \ref{DT_ALG_COMP}, we can assume that 
        $$ \frac{\T{v}_{0,m+1}(s)}{\T{v}_{0,m}(s)} = \frac{s}{(s+b_n)^{d(n)}}.$$
    If $d(m)=0$, we obtain the first case. If $d(m)=1$, we can do a partial fraction decomposition and subtract $v_{0,m}$ from $v_{0,m+1}$ to obtain the other case.    
\end{proof}

\subsection{Characterization} \label{CLASS}

\subsubsection{Multiplicative setting} \label{CLASS_M}

Using the results from the previous section, we can obtain the following characterization of functions of multiplicative derivative type.

\begin{prop} \label{MDT_SR}
    Let $v_1,\dots,v_n\in L^1_{\MT{},\Sigma}(\R_{>0})$.
    Then the following are equivalent:
    \begin{itemize}
        \item[$i)$] for all $m\in\{1,\dots,n\}$, the functions $v_1,\dots,v_m$ are of multiplicative derivative type w.r.t. some function $\om_m:\R_{>0}\to\C$,
        \item[$ii)$] there exists $p_0,\dots,p_{n-1}\in\R[s]$, $b_1,\dots,b_{n-1}\in\R$ and $d:\{1,\dots,n-1\}\to\{0,1\}$ such that, for all $m\in\{1,\dots,n\}$,
        $$ \frac{\MT{v_m}(s)}{\MT{v_1}(s)} = \frac{p_{m-1}(s)}{\prod_{i=1}^{m-1}(s+b_i)^{d(i)}},\quad s\in\Sigma, $$
    and $\max\{\deg p_{m-1}(s),\deg \prod_{i=1}^{m-1}(s+b_i)^{d(i)}\}=m-1$.
    \end{itemize}
    In that case, for all $m\in\{1,\dots,n\}$,
    $$ \MT{\om_m}(s) = \frac{\MT{v_1}(s)}{\prod_{i=1}^{m-1} (s+b_i)^{d(i)}},\quad s\in\Sigma.$$
\end{prop}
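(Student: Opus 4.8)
The plan is to obtain Proposition~\ref{MDT_SR} by specialising the abstract machinery of Section~\ref{DT} to the Mellin transform. The two ingredients are Theorem~\ref{MDT_ALG}, which converts the analytic assertion ``$v_1,\dots,v_m$ are of multiplicative derivative type'' into the purely algebraic span condition $\spn{\MT{v_j}(s)}_{j=1}^m=\spn{s^{k-1}\hat{\om}_m(s)}_{k=1}^m$ on $\Sigma$ (with $\hat{\om}_m=\MT{\om_m}$), and Proposition~\ref{DT_ALG}, which analyses exactly such a nested family of span conditions. So the proof should amount to checking that the operator $\T=\MT{}$ meets the standing hypotheses of Proposition~\ref{DT_ALG} and then invoking both results together with the identity $\MT{\om}=\hat{\om}$ of Theorem~\ref{MDT_ALG}.

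For $i)\im ii)$ I would fix $m\in\{1,\dots,n\}$, apply Theorem~\ref{MDT_ALG} to the subfamily $v_1,\dots,v_m$ to produce $\hat{\om}_m:\Sigma\to\C$ satisfying the span identity above together with $\MT{\om_m}=\hat{\om}_m$ on $\Sigma$; letting $m$ range over $\{1,\dots,n\}$ gives precisely hypothesis $i)$ of Proposition~\ref{DT_ALG}. That proposition then delivers polynomials $p_0,\dots,p_{n-1}$, reals $b_1,\dots,b_{n-1}$ and $d:\{1,\dots,n-1\}\to\{0,1\}$ with $\MT{v_m}(s)/\MT{v_1}(s)=p_{m-1}(s)/\prod_{i=1}^{m-1}(s+b_i)^{d(i)}$ and the stated maximal-degree condition, as well as the formula $\MT{\om_m}(s)=\MT{v_1}(s)/\prod_{i=1}^{m-1}(s+b_i)^{d(i)}$. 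For $ii)\im i)$ I would observe that the displayed ratios are exactly hypothesis $ii)$ of Proposition~\ref{DT_ALG} applied to $v_1,\dots,v_m$, so that result returns the span identity for each $m$ with $\hat{\om}_m(s)=\MT{v_1}(s)/\prod_{i=1}^{m-1}(s+b_i)^{d(i)}$; feeding this into the direction $ii)\im i)$ of Theorem~\ref{MDT_ALG} produces the functions $\om_m$ with respect to which $v_1,\dots,v_m$ are of multiplicative derivative type and simultaneously identifies $\MT{\om_m}$.

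The step requiring genuine care is the standing hypothesis of Proposition~\ref{DT_ALG} that $\T$ produces transforms that are non-vanishing on the index set: the Mellin transform of a (not necessarily positive) $v\in\spn{v_j}_{j=1}^n\subset L^1_{\MT{},\Sigma}(\R_{>0})$ genuinely may have zeros, and indeed $\spn{\MT{v_j}}_{j=1}^n$ contains functions such as $(s-c)\MT{\om_n}(s)$. What the proof of Proposition~\ref{DT_ALG} (via Proposition~\ref{DT_ALG_FUN}) actually exploits, though, is only that the transforms in question are not identically zero and that a polynomial identity valid at infinitely many points is valid identically; both hold here because every such $\MT{v}$ is analytic on the strip $\Sigma+i\R$ and, being $\not\equiv0$ by injectivity of the Mellin transform, has only a discrete zero set. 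I would therefore run Proposition~\ref{DT_ALG} over the infinite set $\Sigma+i\R$, so that every division by a polynomial or by $\MT{\om_m}$ is legitimate off a discrete set and the resulting identities extend by the identity theorem; equivalently one deletes from $\Sigma$ the finitely many zeros of the finitely many polynomials involved and of the analytic function $\MT{\om_n}$ and works on the remaining (still infinite) cofinite subset. Once this bookkeeping is in place the argument is a direct concatenation of the two cited results.
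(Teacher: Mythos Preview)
Your proposal is correct and matches the paper's own proof, which consists of the single sentence ``We have to combine Theorem~\ref{MDT_ALG} and Proposition~\ref{DT_ALG}.'' You have simply unpacked this combination carefully, and in doing so you correctly flag the one genuine subtlety: the standing hypothesis in Proposition~\ref{DT_ALG} that $\T v$ be non-vanishing on $\Sigma$ for every $v\in\spn{v_j}_{j=1}^n$ is not literally satisfied by the Mellin transform on a real interval, and your workaround (pass to the strip $\Sigma+i\R$, use analyticity and injectivity to confine zeros to a discrete set, then extend the resulting polynomial/rational identities by the identity theorem) is exactly what is needed to make the citation rigorous.
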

\begin{proof}
    We have to combine Theorem \ref{MDT_ALG} and Proposition \ref{DT_ALG}.
\end{proof}    

By considering the collections of functions $\mathcal{W}(n;w_1,\dots,w_r)$ relevant for multiple orthogonal polynomial ensembles, the above allows us to identify the moments of the first function $w_1$. Using the compatibility relations in Proposition \ref{DT_ALG_COMP}, we can then describe the moments of all the functions $w_1,\dots,w_r$.

\begin{proof}[Proof of Theorem \ref{MOPE_MDT_CORE_MOM}: $i)$]
    Since $r<n$, according to Proposition \ref{MDT_SR}, we can write
        $$ \frac{\MT{w}_1(s+1)}{\MT{w}_1(s)} = c \, \frac{\prod_{i=1}^{r} (s+a_i)^{d_1(i)}}{\prod_{i=1}^{r} (s+b_i)^{d_2(i)}},\quad s\in\Sigma,$$
    for some $a_i\in\C$, $b_i,c\in\R$ and $d_1,d_2:\{1,\dots,r\}\to \{0,1\}$ with all $d_1(i)=1$ or all $d_2(i)=1$. Consecutive use of this identity for $s\in\Z_{\geq 1}$, then leads to
        $$ \MT{w}_1(s) = \MT{w}_1(1) c^{s-1} \frac{\prod_{i=1}^{r} (a_i+1)_{s-1}^{d_1(i)}}{\prod_{i=1}^{r} (b_i+1)_{s-1}^{d_2(i)}},\quad s\in\Sigma\cap\Z_{\geq 1}.$$
    Since $\MT{w}_1(s)$ is defined for all $s\in\Z_{\geq 1}$ and doesn't vanish, we must have $a_i,b_i\not\in\Z_{\leq -1}$ and $c\neq 0$. Hence, there exists $c_0\in\C$, namely
        $$ c_0 = \frac{1}{c} \, \frac{\prod_{j=1}^{r} \Gamma(b_j+1)^{d_2(i)}}{\prod_{j=1}^{r} \Gamma(a_j+1)^{d_1(i)}}, $$
    such that 
        $$ \MT{w}_1(s) = c_0 c^s \frac{\prod_{j=1}^{r} \Gamma(s+a_j)^{d_1(i)}}{\prod_{j=1}^{r} \Gamma(s+b_j)^{d_2(i)}},\quad s\in\Sigma\cap\Z_{\geq 1}.$$
    Proposition \ref{DT_ALG_COMP} then implies that there exists an invertible upper-triangular matrix $U$ such that the functions $(w_{1,j})_{j=1}^r = (w_j)_{j=1}^r U$ satisfy
    $$ \MT{w_{1,j}}(s) = c^s \frac{\prod_{j=1}^{r} \Gamma(s+a_j)^{d_1(i)}}{\prod_{j=1}^{r} \Gamma(s+b_j)^{d_2(i)}} \frac{s^{j-1}}{\prod_{i=1}^{j-1}(s+b_i)^{d_2(i)}},\quad s\in\Sigma\cap\Z_{\geq 1}.$$
    After the reparametrization $\vec{b}\mapsto (b_2,\dots,b_r,b_1+1)$ and modifying $d_2$ accordingly, we obtain the desired result.
\end{proof}

In order to describe the Mellin transform on an interval $\Sigma$ and not only at integer values, we have to impose an additional assumption on $w_1,\dots,w_r$. This is an analytic obstruction that also appears in connection to the Bohr-Mollerup Theorem, see, e.g., \cite{MZ:B-M}. The idea is that the moments can't detect functions of the form $e^{\sin(\pi s)}$ as part of the Mellin transform. In line with that theorem, we decide to demand positivity of on $w_1,\dots,w_r$. This is a natural assumption, because later we want to use these functions to create an ensemble.
\medbreak

We are now ready to prove the implication from $i)$ to $ii)$ in Theorem \ref{MOPE_MDT_CORE}.

\begin{proof}[Proof of Theorem \ref{MOPE_MDT_CORE}: $i)\Rightarrow ii)$]
    Since $r<n$, Proposition \ref{MDT_SR} gives rise to a first order difference equation for $\MT{w_1}$ of the form
        $$ \MT{w}_1(s+1) = c \, \frac{\prod_{i=1}^{r} (s+a_i)^{d_1(i)}}{\prod_{i=1}^{r} (s+b_i)^{d_2(i)}} \MT{w}_1(s),\quad s\in\Sigma,$$
     for some $a_i\in\C$, $b_i,c\in\R$ and $d_1,d_2:\{1,\dots,r\}\to \{0,1\}$ with all $d_1(i)=1$ or all $d_2(i)=1$. We will now verify that Proposition \ref{MT_DE_uni} is applicable so that, up to a scalar multiplication, the above difference equation has a unique solution of the form $\MT{w}_1$ with a.e. $w_1>0$. Note that $\Sigma=(s_0,\infty)$ for some $s_0\in[0,1)$ because of the assumption that $\Z_{\geq 1}\subset \Sigma$. Denote 
        $$ g(s) =  c \, \frac{\prod_{i=1}^{r} (s+a_i)^{d_1(i)}}{\prod_{i=1}^{r} (s+b_i)^{d_2(i)}}. $$
     Since $w_1$ is a.e. positive, $\MT{w_1}>0$ on $\Sigma$ and thus also $g>0$ on $\Sigma$. Moreover, $\lim_{n\to\infty} g(n+1)/g(n) = 1$. The conditions of Proposition \ref{MT_DE_uni} are therefore satisfied. With  Theorem~\ref{MOPE_MDT_CORE_MOM}~$i)$ in mind, it is then straightforward to show that the unique solution is given by
    $$ \MT{w}_1(s) = c_0 c^s \frac{\prod_{j=1}^{r} \Gamma(s+a_j)^{d_1(i)}}{\prod_{j=1}^{r} \Gamma(s+b_j)^{d_2(i)}},\quad s\in\Sigma.$$
    We can then proceed similarly as in the proof of Theorem~\ref{MOPE_MDT_CORE_MOM}~$i)$ to describe the Mellin transform of the other weights.
\end{proof}

The last step in the proof of Theorem \ref{MOPE_MDT_CORE} is to prove the equivalence between $ii)$ and $iii)$ for all $n\in\N$ (structurally, this is what we need to prove for Proposition \ref{MOPE_MDT_w_om}). This also shows $ii)$ of Theorem \ref{MOPE_MDT_CORE_MOM}.

\begin{prop}
Let $n\in\N$. Suppose that the Mellin transforms of $w_{1,1},\dots,w_{1,r}\in L^{1}_{\MT{},\Sigma}(\R_{>0})$ are given by \eqref{MOPE_MDT_w} and the Mellin transform of $\om_{1,n}\in L^{1}_{\MT{},\Sigma}(\R_{>0})$ is given by \eqref{MOPE_MDT_om}. Then the functions in $\mathcal{W}(n;w_{1,1},\dots,w_{1,r})$ are of multiplicative derivative type w.r.t. $\om_{1,n}$.
\end{prop}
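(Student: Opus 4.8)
The plan is to reduce the statement to the algebraic characterization in Theorem~\ref{MDT_ALG}. Enumerate the set $\mathcal{W}(n;w_{1,1},\dots,w_{1,r})$ as $v_1,\dots,v_n$ in step-line order, so that $\{v_1,\dots,v_m\}$ spans $\mathcal{W}(m;w_{1,1},\dots,w_{1,r})$ for each $m\le n$, and write the $m$-th function as $v_m=x^{q_m}w_{1,t_m}$ (here $m=q_m r+t_m$ with $1\le t_m\le r$). Since $\MT{(x^{k-1}w_{1,j})}(s)=\MT{w_{1,j}}(s+k-1)$, it suffices, by Theorem~\ref{MDT_ALG} applied with $\hat\om:=\MT{\om_{1,n}}$, to establish the identity of spans
$$\spn{\MT{v_j}(s)}_{j=1}^{n}=\spn{s^{\ell-1}\MT{\om_{1,n}}(s)}_{\ell=1}^{n},\qquad s\in\Sigma.$$
Granting this, Theorem~\ref{MDT_ALG} produces a function $\om$ of which $v_1,\dots,v_n$ are of multiplicative derivative type and with $\MT{\om}=\MT{\om_{1,n}}$; since $\om_{1,n}\in L^1_{\MT{},\Sigma}(\R_{>0})$, injectivity of the Mellin transform then forces $\om=\om_{1,n}$ a.e., which gives the claim.

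For the inclusion ``$\subseteq$'' I would use $\Gamma(s+m)=(s)_m\Gamma(s)$ for $m\in\Z_{\ge 0}$ to factor $\MT{w_{1,j}}(s)=\Phi(s)\rho_j(s)$ with
$$\Phi(s)=c^{s}\prod_{i=1}^{r}\frac{\Gamma(s+a_i)^{d_1(i)}}{\Gamma(s+b_i)^{d_2(i)}},\qquad \rho_j(s)=\frac{s^{j-1}}{\prod_{i=1}^{j}(s+b_i)^{d_2(i)}},$$
and, using $\Gamma(s+b_i)/\Gamma(s+b_i+n_i)=1/(s+b_i)_{n_i}$, to rewrite $\MT{\om_{1,n}}(s)=\Phi(s)\prod_{i=1}^{r}\big((s+b_i)_{n_i}\big)^{-d_2(i)}$. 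Then $P_{j,k}(s):=\MT{w_{1,j}}(s+k-1)/\MT{\om_{1,n}}(s)$ works out to an explicit product of Pochhammer symbols in which the poles contributed by $\rho_j(s+k-1)$ and by the factors $(s+b_i)_{k-1}^{d_2(i)}$ coming from $\Phi(s+k-1)/\Phi(s)$ are all absorbed by $(s+b_i)_{n_i}$, because $k-1\le n_i$ for every $i$ (from $k\le n_j$ and the step-line inequalities $n_1\ge\cdots\ge n_r\ge n_1-1$). Thus $P_{j,k}$ is a polynomial, and a degree count — split into the cases ``all $d_1(i)=1$'' and ``all $d_2(i)=1$'', again invoking the step-line inequalities — gives $\deg P_{j,k}\le n-1$, so $\MT{v_j}(s)\in\spn{s^{\ell-1}\MT{\om_{1,n}}(s)}_{\ell=1}^{n}$ for all $j$.

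Since there are exactly $n$ of the $P_{j,k}$ and the target space has dimension $n$, the reverse inclusion amounts to linear independence of the $P_{j,k}$; as the Mellin transform is injective and $\Sigma$ is a uniqueness set for functions analytic on $\Sigma+i\R$, this is equivalent to linear independence of $v_1,\dots,v_n$, hence of the rational functions $g_m:=\MT{v_m}/\Phi$. When all $d_2(i)=0$ (so all $d_1(i)=1$) this is immediate: then $\MT{\om_{1,n}}=\MT{w_{1,1}}$ and $g_m(s)=c^{q_m}\prod_{i=1}^{r}(s+a_i)_{q_m}(s+q_m)^{t_m-1}$ is a polynomial of degree $m-1$, so the $g_m$ have pairwise distinct degrees. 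In general I would induct along the step-line: $g_m$ has denominator $\prod_{i:\,d_2(i)=1}(s+b_i)_{m_i}$, where $\vec{m}\in\mathcal{S}^r$ is the multi-index reached after $m$ steps, and since these multi-indices increase componentwise, $g_{m+1}$ has a pole at some $-b_i$ of strictly larger order than each of $g_1,\dots,g_m$, so $v_{m+1}\notin\spn{v_1,\dots,v_m}$. The span identity then follows, and Theorem~\ref{MDT_ALG} completes the proof. The delicate point I anticipate is precisely this last induction: if the shifted parameters $-a_i-\ell$ and the $-b_i$ are allowed to coincide, the numerator of $g_m$ may lower the pole order at some $-b_i$, and one must then either perturb to generic parameters or supplement the pole orders with the growth of $g_m$ at $\infty$; the degree bound in the second paragraph is routine but genuinely requires the case split and careful bookkeeping with $n_1\ge\cdots\ge n_r\ge n_1-1$.
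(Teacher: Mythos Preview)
Your overall strategy—pass to Mellin transforms and verify the span identity of Theorem~\ref{MDT_ALG} with $\hat\om=\MT\om_{1,n}$—is sound, and your computation for the inclusion ``$\subseteq$'' is correct: the ratio $P_{j,k}(s)=\MT{w_{1,j}}(s+k-1)/\MT{\om_{1,n}}(s)$ is indeed a polynomial of degree at most $n-1$, and your use of the step-line inequalities to absorb all the putative poles is exactly the right bookkeeping.

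The gap is precisely where you flag it: the linear independence of $v_1,\dots,v_n$. Your pole-order induction on the $g_m$'s does break down for special parameters (cancellations between the $(s+a_i)_{k-1}$ and the $(s+b_i)_k$ factors can genuinely lower the pole order), and neither a generic-parameter perturbation nor the growth at infinity alone gives a clean fix—for instance, when all $d_2(i)=1$ but some $d_1(i)=0$, several $P_{j,k}$ with the same $k$ share the same degree, and when all $d_1(i)=1$ the degrees of the $P_{j,k}$ are not monotone in $m$ either.

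The paper closes this gap differently: instead of invoking Theorem~\ref{MDT_ALG} directly, it works through Proposition~\ref{MDT_SR} (equivalently Proposition~\ref{DT_ALG}), proving by induction on $n$ that the ratio $\MT{v_m}/\MT{v_1}$ has the rational form in condition $ii)$ with the \emph{exact} degree condition $\max\{\deg p_{m-1},\deg\prod_i(s+b_i)^{d(i)}\}=m-1$. That degree condition is what forces linear independence inside the proof of Proposition~\ref{DT_ALG}, so no separate argument is needed. The pleasant point is that your own computation already contains everything required: dividing your $P_{j,k}$ by $P_{1,1}$ gives $\MT{v_m}/\MT{v_1}$, and your degree count (together with ``all $d_1(i)=1$ or all $d_2(i)=1$'') shows that the numerator has degree $m-1$ in the first case and the denominator has degree $m-1$ in the second. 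So the cleanest repair is simply to route your calculations through Proposition~\ref{MDT_SR} rather than Theorem~\ref{MDT_ALG}.
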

\begin{proof}
    We will prove this by induction on $n\in\N$ via Proposition~\ref{MDT_SR}. For $n=1$, we have $\MT{w_{1,1}}(s)/\MT{w_{1,1}}(s)=1$ and $ii)$ is satisfied. Hence, $\mathcal{W}(1;w_{1,1},\dots,w_{1,r})$ is of multiplicative derivative type w.r.t. some $\om_1:\R_{>0}\to\C$ with
        $$ \MT{\om_1}(s)=\MT{w_{1,1}}(s),\quad s\in\Sigma, $$
    and thus $\MT{\om_1}=\MT{\om_{1,1}}$ on $\Sigma$ as desired. Suppose that $\mathcal{W}(m;w_{1,1},\dots,w_{1,r})$ is of multiplicative derivative type w.r.t. $\om_{1,m}$ for all $m<n$ and $n=r\eta+j\geq 2$ with $\eta\in\Z_{\geq 0}$ and $j\in\{0,\dots,r-1\}$. Since $i)$ is satisfied for $m\in\{1,\dots,n-1\}$, we obtain $ii)$ for $m\in\{1,\dots,n-1\}$. We need to show that $ii)$ also holds for $m=n$ so that, according to $i)$, $\mathcal{W}(n;w_{1,1},\dots,w_{1,r})$ is of multiplicative derivative type w.r.t. some $\om_n$. If $\MT{w_{1,j}}(s+\eta)/\MT{w_{1,1}}(s)$ is of the appropriate form, we should then recover $\MT{\om_n}=\MT{\om_{1,n}}$ on $\Sigma$. By definition, we have
        $$ \frac{\MT{w_{1,j}}(s+\eta)}{\MT{w_{1,1}}(s)} = c^\eta \, \frac{\prod_{i=1}^{r} (s+a_i)_\eta^{d_1(i)}}{\prod_{i=1}^{r} (s+b_i)_\eta^{d_2(i)}} \frac{s^{j-1}(s+b_1)^{d_2(1)}}{\prod_{i=1}^j (s+b_i+\eta)^{d_2(i)}},\quad s\in\Sigma. $$
    Note that the factor $(s+b_1)^{d_2(1)}$ appears in both the numerator and denominator. The degree of the numerator and denominator is therefore at most $r\eta+j-1=n-1$. Since all $d_1(i)=1$ or all $d_2(i)=1$, at least one of them is equal to $n-1$. Take $\vec{n}\in\mathcal{S}^r$ with $\sz{n}=n$, then $n_i = \eta+1$ for $i\in\{1,\dots,j\}$ and $n_i = \eta$ for $i\in\{j+1,\dots,r\}$. Hence we can write the denominator as $\prod_{i=1}^r (s+b_i)_{n_i}^{d_2(i)}/(s+b_1)^{d_2(1)}$. We can therefore conclude that $\mathcal{W}(n;w_{1,1},\dots,w_{1,r})$ is of multiplicative derivative type w.r.t. a function $\om_n:\R_{>0}\to\C$ with    
        $$ \MT{\om_n}(s) = \frac{(s+b_1)^{d_2(1)}\MT{w_{1,1}}(s)}{\prod_{i=1}^r (s+b_i)_{n_i}^{d_2(i)}},\quad s\in\Sigma. $$
    Together with the identity
    $$\MT{w_{1,1}}(s) = c^s \frac{\prod_{i=1}^{r} \Gamma(s+a_i)^{d_1(i)}}{\prod_{i=1}^{r} \Gamma(s+b_i)^{d_2(i)}} \frac{1}{(s+b_1)^{d_1(i)}},\quad s\in\Sigma,$$
    this yields $\MT{\om_n}=\MT{\om_{1,n}}$ on $\Sigma$ as desired.
\end{proof}

\subsubsection{Additive setting} \label{CLASS_A}

There is a similar characterization of functions of additive derivative type.

\begin{prop} \label{ADT_SR}
    Let $v_1,\dots,v_n\in L^1_{\LT{},\Sigma}(\R)$. Then the following are equivalent:
    \begin{itemize}
        \item[$i)$] for all $m\in\{1,\dots,n\}$, the functions $v_1,\dots,v_m$ are of additive derivative type w.r.t. some function $\om_m:\R\to\C$,
        \item[$ii)$] there exists $p_0,\dots,p_{n-1}\in\R[s]$, $b_1,\dots,b_{n-1}\in\R$ and $d:\{1,\dots,n-1\}\to\{0,1\}$ such that, for all $m\in\{1,\dots,n\}$,
        $$ \frac{\LT{v_m}(s)}{\LT{v_1}(s)} = \frac{p_{m-1}(s)}{\prod_{i=1}^{m-1}(s+b_i)^{d(i)}},\quad s\in\Sigma, $$
    and $\max\{\deg p_{m-1}(s),\deg \prod_{i=1}^{m-1}(s+b_i)^{d(i)}\}=m-1$.
    \end{itemize}
    In that case, for all $m\in\{1,\dots,n\}$,
    $$ \LT{\om_m}(s) = \frac{\LT{v_1}(s)}{\prod_{i=1}^{m-1} (s+b_i)^{d(i)}},\quad s\in\Sigma.$$
\end{prop}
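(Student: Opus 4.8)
The plan is to mirror the proof of Proposition~\ref{MDT_SR} in the additive setting, combining the algebraic reformulation of additive derivative type (Theorem~\ref{ADT_ALG}) with the purely formal structural result of Proposition~\ref{DT_ALG}, which was stated abstractly for a general linear operator $\T$ precisely so that it could be reused here with $\T=\LT{}$.

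For the implication $i)\Rightarrow ii)$ I would argue as follows. Assume that for each $m\in\{1,\dots,n\}$ the functions $v_1,\dots,v_m$ are of additive derivative type with respect to some $\om_m:\R\to\C$. By Theorem~\ref{ADT_ALG} applied at each level $m$, there is a function $\hat\om_m:\Sigma\to\C$ with
$$ \spn{\LT{v_j}(s)}_{j=1}^m = \spn{s^{j-1}\hat\om_m(s)}_{j=1}^m,\quad s\in\Sigma, $$
and in fact $\LT{\om_m}=\hat\om_m$ on $\Sigma$. Now I would invoke the $i)\Rightarrow ii)$ direction of Proposition~\ref{DT_ALG} with $X=\R$, the operator $\T=\LT{}$ restricted to $\spn{v_j}_{j=1}^n$, and the infinite set $\Sigma$ (an interval in $\R_{>0}$). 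The hypothesis to verify is that the relevant Laplace transforms are non-vanishing, which follows because every $v\in\spn{v_j}_{j=1}^n\subset L^1_{\LT{},\Sigma}(\R)$ has a Laplace transform that is analytic on the strip $\Sigma+i\R$ (recalled in Section~\ref{PRELIM_A}), hence is not identically zero and has at most isolated zeros; the specific transforms entering the argument are nonzero since they span spaces of the expected dimension. Proposition~\ref{DT_ALG} then yields $p_0,\dots,p_{n-1}\in\R[s]$, reals $b_1,\dots,b_{n-1}$ and $d:\{1,\dots,n-1\}\to\{0,1\}$ with
$$ \frac{\LT{v_m}(s)}{\LT{v_1}(s)} = \frac{p_{m-1}(s)}{\prod_{i=1}^{m-1}(s+b_i)^{d(i)}},\quad s\in\Sigma, $$
together with the degree condition $\max\{\deg p_{m-1},\deg\prod_{i=1}^{m-1}(s+b_i)^{d(i)}\}=m-1$, which is exactly $ii)$.

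Conversely, for $ii)\Rightarrow i)$, the data in $ii)$ feeds directly into the $ii)\Rightarrow i)$ direction of Proposition~\ref{DT_ALG}, producing for each $m$ a function $\hat\om_m:\Sigma\to\C$ with $\spn{\LT{v_j}(s)}_{j=1}^m=\spn{s^{j-1}\hat\om_m(s)}_{j=1}^m$ and the explicit formula $\hat\om_m(s)=\LT{v_1}(s)/\prod_{i=1}^{m-1}(s+b_i)^{d(i)}$. Applying the $ii)\Rightarrow i)$ direction of Theorem~\ref{ADT_ALG} then shows that $v_1,\dots,v_m$ are of additive derivative type with respect to some $\om_m:\R\to\C$, and the concluding clause of Theorem~\ref{ADT_ALG} identifies $\LT{\om_m}=\hat\om_m$ on $\Sigma$, which is the displayed formula in the statement. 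The only genuinely delicate point — and it is the same one as in the multiplicative case — is the bookkeeping in applying Proposition~\ref{DT_ALG}: one must know the Laplace transforms involved are not identically zero (so the ratios $\LT{v_m}/\LT{v_1}$ are honest rational-function identities on the infinite set $\Sigma$, extending to formal identities as used inside the proof of Proposition~\ref{DT_ALG}) and that the dimension counts match, so that the $\hat\om_m$ supplied by Theorem~\ref{ADT_ALG} and by Proposition~\ref{DT_ALG} coincide. These transfer verbatim from the Mellin setting once one uses analyticity on $\Sigma+i\R$ and injectivity of the Laplace transform (Section~\ref{PRELIM_A}) in place of the corresponding Mellin facts, together with Proposition~\ref{LT_DIFF} in place of Proposition~\ref{MT_DIFF} inside the proof of Theorem~\ref{ADT_ALG}.
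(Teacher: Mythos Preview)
Your proposal is correct and follows exactly the paper's approach: the paper's proof is the single sentence ``We have to combine Theorem~\ref{ADT_ALG} and Proposition~\ref{DT_ALG},'' and you have simply spelled out the details of that combination.
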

\begin{proof}
    We have to combine Theorem \ref{ADT_ALG} and Proposition \ref{DT_ALG}.
\end{proof}

By applying the previous result to collections of functions $\mathcal{W}(n;w_1,\dots,w_r)$ relevant for multiple orthogonal polynomial ensembles, we will be able to identify the Laplace transform of the first function $w_1$. Compared to the multiplicative setting, to do so, we won't have to impose any additional assumptions on $w_1,\dots,w_r$. The Laplace transforms of the remaining functions $w_2,\dots,w_r$ will follow from the compatibility conditions in Proposition~\ref{DT_ALG_COMP}. This is exactly the content of the implication from $i)$ to $ii)$ in Theorem \ref{MOPE_ADT_CORE}.

\begin{proof}[Proof of Theorem \ref{MOPE_ADT_CORE}: $i)\Rightarrow ii)$]
    We first note that by analyticity of $\LT{w_j}$ on $\Sigma$, we have
        $$ \int_{-\infty}^{\infty} x^{k-1}w_j(x) e^{-sx} dx = (\LT{w_j})^{(k-1)}(s),\quad s\in\Sigma. $$
    Since $r<n$, Proposition \ref{ADT_SR} then leads to a first order differential equation for $\LT{w_1}$ of the form
        $$ \frac{(\LT{w_1})'(s)}{\LT{w}_1(s)} = c \, \frac{\prod_{i=1}^{r} (s+a_i)^{d_1(i)}}{\prod_{i=1}^{r} (s+b_i)^{d_2(i)}},\quad s\in\Sigma,$$
    for some $a_i\in\C$, $b_i,c\in\R$ and $d_1,d_2:\{1,\dots,r\}\to\{0,1\}$ with all $d_1(i)=1$ or all $d_2(i)=1$. Its solution is given by
        $$ \LT{w}_1(s) = \LT{w}_1(s_0) \exp\left( c \int_{s_0}^{s} \frac{\prod_{i=1}^{r} (t+a_i)^{d_1(i)}}{\prod_{i=1}^{r} (t+b_i)^{d_2(i)}} dt \right),\quad s\in\Sigma,$$
    where $s_0\in\Sigma$. Therefore, by Proposition \ref{DT_ALG_COMP}, there exists an invertible upper-triangular matrix $U$ such that the functions $(w_{2,j})_{j=1}^r = (w_j)_{j=1}^r U$ satisfy
        $$ \LT{w_{2,j}}(s) = \exp\left( c \int_{s_0}^{s} \frac{\prod_{i=1}^{r} (t+a_i)^{d_1(i)}}{\prod_{i=1}^{r} (t+b_i)^{d_2(i)}} dt \right) \frac{s^{j-1}}{\prod_{i=1}^{j-1} (s+b_i)^{d_2(i)}},\quad s\in\Sigma.$$
    After the reparametrization $\vec{b}\mapsto (b_2,\dots,b_r,b_1)$ and modifying $d_2$ accordingly, we may write this as
    $$ \LT{w_{2,j}}(s) = \exp\left( c \int_{s_0}^{s} \frac{\prod_{i=1}^{r} (t+a_i)^{d_1(i)}}{\prod_{i=1}^{r} (t+b_i)^{d_2(i)}} dt \right) \frac{s^{j-1}}{\prod_{i=2}^{j} (s+b_i)^{d_2(i)}},\quad s\in\Sigma.$$
    It then remains to note that we can write
    $$\frac{\prod_{i=1}^{r} (t+a_i)^{d_1(i)}}{\prod_{i=1}^{r} (t+b_i)^{d_2(i)}} = \frac{\prod_{i=1}^{r} (t+a_i^\ast)^{d_1^\ast(i)}}{\prod_{i=1}^{r} (t+b_i)^{d_2(i)}} - \frac{1}{(t+b_1)^{d_2(1)}},$$
    by taking $a_i^\ast\in\C$ and $d_1^\ast:\{1,\dots,r\}\to\{0,1\}$ such that $ \prod_{i=1}^{r} (t+a_i^\ast)^{d_1^\ast(i)} = \prod_{i=1}^{r} (t+a_i)^{d_1(i)} + \prod_{i=2}^{r} (t+b_i)^{d_2(i)} $. In that case, all $d_1(i)=1$ is equivalent to all $d_1^\ast(i)=1$.
\end{proof}

In order to fully prove Theorem \ref{MOPE_ADT_CORE}, it remains to prove the equivalence between $ii)$ and $iii)$ for all $n\in\N$ (structurally, this is also what we need to prove Proposition \ref{MOPE_ADT_w_om}).

\begin{prop}
Let $n\in\N$. Suppose that the Laplace transforms of $w_{2,1},\dots,w_{2,r}\in L^{1}_{\LT{},\Sigma}(\R)$ are given by \eqref{MOPE_ADT_w} and the Laplace transform of $\om_{2,n}\in L^{1}_{\LT{},\Sigma}(\R)$ is given by \eqref{MOPE_ADT_om}. Then the functions in $\mathcal{W}(n;w_{2,1},\dots,w_{2,r})$ are of multiplicative derivative type w.r.t. $\om_{2,n}$.
\end{prop}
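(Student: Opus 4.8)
The plan is to follow the proof of the analogous statement in the multiplicative setting, inducting on $n$ and applying Proposition~\ref{ADT_SR} at each stage; the one genuinely new ingredient is that the Mellin shift $\MT{x^{\eta}w}(s)=\MT{w}(s+\eta)$ is replaced by the Laplace identity $\LT{x^{\eta}w}(s)=(-1)^{\eta}(\LT{w})^{(\eta)}(s)$, valid on the open strip $\Sigma+i\R$ since Laplace transforms are analytic there. Throughout, write $\LT{w_{2,j}}(s)=\phi(s)\rho_j(s)$, where
$$\phi(s)=\exp\left(c\int_{s_0}^{s}\prod_{i=1}^{r}\frac{(t+a_i)^{d_1(i)}}{(t+b_i)^{d_2(i)}}\,dt\right),\qquad \rho_j(s)=\frac{s^{j-1}}{\prod_{i=1}^{j}(s+b_i)^{d_2(i)}},$$
so that $\phi'(s)/\phi(s)=c\prod_{i=1}^{r}(s+a_i)^{d_1(i)}/\prod_{i=1}^{r}(s+b_i)^{d_2(i)}$ is a rational function whose denominator is $\prod_{i=1}^{r}(s+b_i)^{d_2(i)}$ and whose poles are simple and located among the $-b_i$ with $d_2(i)=1$.

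For $n=1$ we have $\LT{w_{2,1}}(s)/\LT{w_{2,1}}(s)=1$, so condition $ii)$ of Proposition~\ref{ADT_SR} holds and $\mathcal{W}(1;w_{2,1},\dots,w_{2,r})$ is of additive derivative type with respect to some $\om_1$ with $\LT{\om_1}=\LT{w_{2,1}}$; since the multi-index for $n=1$ is $(1,0,\dots,0)$, \eqref{MOPE_ADT_om} gives $\LT{\om_{2,1}}(s)=\phi(s)(s+b_1)^{-d_2(1)}=\LT{w_{2,1}}(s)$, as required. For the inductive step, assume the claim for all $m<n$ and write $n=r\eta+j$ with $\eta\in\Z_{\ge 0}$ and $j\in\{1,\dots,r\}$, so that $\vec{n}\in\mathcal{S}^r$ with $\sz{n}=n$ has $n_i=\eta+1$ for $i\le j$ and $n_i=\eta$ for $i>j$, and the function one adds to $\mathcal{W}(n-1;w_{2,1},\dots,w_{2,r})$ to obtain $\mathcal{W}(n;w_{2,1},\dots,w_{2,r})$ is $v_n=x^{\eta}w_{2,j}$, while $v_1=w_{2,1}$. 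By the induction hypothesis, part $i)$ of Proposition~\ref{ADT_SR} holds for $m\in\{1,\dots,n-1\}$, hence so does part $ii)$, and it remains to put $\LT{v_n}(s)/\LT{v_1}(s)=(-1)^{\eta}(\LT{w_{2,j}})^{(\eta)}(s)/\LT{w_{2,1}}(s)$ into the rational form required by $ii)$ for $m=n$.

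The key step is a differentiation lemma, proved by induction on $\eta$ from $(\phi\rho)'=\phi\,(\rho'+(\phi'/\phi)\rho)$ and the explicit shape of $\phi'/\phi$: one shows that
$$(\LT{w_{2,j}})^{(\eta)}(s)=\phi(s)\,\frac{P_{\eta}(s)}{\prod_{i=1}^{r}(s+b_i)^{d_2(i)n_i}},\qquad P_{\eta}\in\R[s],\quad \deg P_{\eta}\le n-1,$$
the point being that each differentiation raises the multiplicity of every surviving pole by exactly one, so that after $\eta$ steps the denominator is precisely the one appearing in \eqref{MOPE_ADT_om}. Dividing by $\LT{v_1}(s)=\phi(s)(s+b_1)^{-d_2(1)}$ cancels $\phi$ and one factor $(s+b_1)^{d_2(1)}$, so $\LT{v_n}(s)/\LT{v_1}(s)$ is a ratio of two polynomials, both of degree at most $n-1$; that the maximum of the two degrees equals $n-1$ — as Proposition~\ref{ADT_SR} also demands — follows from a short leading-order analysis in the two allowed regimes: if all $d_1(i)=1$ then $\phi$ grows genuinely exponentially and $\deg P_{\eta}=n-1$, whereas if all $d_2(i)=1$ then the denominator $\prod_{i=1}^{r}(s+b_i)^{n_i}/(s+b_1)$ already has degree $n-1$. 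Having verified $ii)$ for all $m\le n$, Proposition~\ref{ADT_SR} yields that $\mathcal{W}(n;w_{2,1},\dots,w_{2,r})$ is of additive derivative type with respect to a function $\om_n$ with $\LT{\om_n}(s)=\LT{v_1}(s)(s+b_1)^{d_2(1)}\prod_{i=1}^{r}(s+b_i)^{-d_2(i)n_i}=\phi(s)\prod_{i=1}^{r}(s+b_i)^{-d_2(i)n_i}=\LT{\om_{2,n}}(s)$, completing the induction. The step I expect to be the main obstacle is exactly this differentiation lemma together with the accompanying degree bookkeeping: because differentiation, unlike the Mellin shift, mixes numerator and denominator, one must argue carefully — either by induction on $\eta$, or via a Leibniz/Bell-polynomial expansion of $(\LT{w_{2,j}})^{(\eta)}/\LT{w_{2,j}}$ — that no spurious denominator factors appear and that $\deg P_{\eta}\le n-1$ always holds, and one must separate the cases where all $d_1(i)=1$ and where all $d_2(i)=1$ to pin down the sharp degree needed by Proposition~\ref{ADT_SR}.
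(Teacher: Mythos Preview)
Your proposal is correct and follows essentially the same route as the paper's proof: induction on $n$ via Proposition~\ref{ADT_SR}, replacing the Mellin shift by $\LT{x^{\eta}w}(s)=(-1)^{\eta}(\LT{w})^{(\eta)}(s)$, and computing $(\LT{w_{2,j}})^{(\eta)}(s)/\LT{w_{2,1}}(s)$ as a rational function with denominator $\prod_{i=1}^{r}(s+b_i)^{d_2(i)n_i}/(s+b_1)^{d_2(1)}$, followed by the same two-case degree analysis (all $d_1(i)=1$ forces $\deg P_\eta=n-1$; all $d_2(i)=1$ forces the denominator to have degree $n-1$). The only organizational differences are that the paper treats $n\in\{1,\dots,r\}$ and $n=r+1$ as separate base cases before doing the general step, and that it carries out your ``differentiation lemma'' inline by writing the one-step recursion $p_{\eta,j}(s)=p_{\eta-1,j}'(s)-p_{\eta-1,j}(s)\sum_i d_2(i)(\eta+1_{2\le i\le j})/(s+b_i)+p_{\eta-1,j}(s)(\LT{w_{2,1}})'(s)/\LT{w_{2,1}}(s)$ rather than packaging it as a lemma; one small imprecision in your sketch is that the first differentiation (when $\eta=0$ and $j<r$) can introduce new poles, not just raise multiplicities, but your denominator formula $\prod_i(s+b_i)^{d_2(i)n_i}$ already accounts for this.
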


\begin{proof}
     We will show this by induction on $n\in\N$ via Proposition~\ref{ADT_SR}. For $n\in\{1,\dots,r\}$, by definition, we have 
     $$\frac{\LT{w_{2,n}}(s)}{\LT{w_{2,1}}(s)}=\frac{s^{n-1}}{\prod_{i=2}^{n}(s+b_i)^{d_2(i)}},\quad s\in\Sigma,$$
     and $ii)$ is satisfied. Hence, $\mathcal{W}(n;w_{2,1},\dots,w_{2,r})$ is of additive derivative type w.r.t. some $\om_n:\R_{>0}\to\C$ with
        $$ \LT{\om_n}(s)=\frac{\LT{w_{2,1}}(s)}{\prod_{i=2}^{n}(s+b_i)^{d_2(i)}},\quad s\in\Sigma, $$
    and thus $\LT{\om_n}=\LT{\om_{2,n}}$ on $\Sigma$ as desired. 
    For $n=r+1$, we take the logarithmic derivative of $\LT{w_{2,1}}(s)$ to obtain
        $$ \frac{(\LT{w_{2,1}})'(s)}{\LT{w_{2,1}}(s)} = c \prod_{i=1}^{r} \frac{(s+a_i)^{d_1(i)}}{(s+b_i)^{d_2(i)}} - \frac{d_2(1)}{s+b_1},\quad s\in\Sigma,. $$
    After writing the right hand side on a common denominator, it is straightforward to check that the degree condition in $ii)$ is satisfied and that the denominator is $\prod_{i=1}^r (s+b_i)^{d_2(i)}$. Hence, $\mathcal{W}(r+1;w_{2,1},\dots,w_{2,r})$ is of additive derivative type w.r.t. some $\om_{r+1}:\R_{>0}\to\C$ with
        $$ \LT{\om_{r+1}}(s)=\frac{\LT{w_{2,1}}(s)}{\prod_{i=1}^{n}(s+b_i)^{d_2(i)}},\quad s\in\Sigma, $$
    and thus $\LT{\om_{r+1}}=\LT{\om_{2,r+1}}$ on $\Sigma$ as desired. 
    Suppose that $\mathcal{W}(m;w_{2,1},\dots,w_{2,r})$ is of additive derivative type w.r.t. $\om_{2,m}$ for all $m<n$ and $n=r\eta+j\geq r+2$ with $\eta\in\Z_{\geq 1}$ and $j\in\{0,\dots,r-1\}$. Since $i)$ is satisfied for $m\in\{1,\dots,n-1\}$, we obtain $ii)$ for $m\in\{1,\dots,n-1\}$. We need to show that $ii)$ also holds for $m=n$ so that, according to $i)$, $\mathcal{W}(n;w_{2,1},\dots,w_{2,r})$ is of additive derivative type w.r.t. some $\om_n$. If $(\LT{w_{2,j}})^{(\eta)}(s)/\LT{w_{2,1}}(s)$ is of the appropriate form, we should then recover $\LT{\om_n}=\LT{\om_{2,n}}$ on $\Sigma$. We know that
        $$ \frac{(\LT{w_{2,j}})^{(\eta-1)}(s)}{\LT{w_{2,1}}(s)} = \frac{p_{\eta-1,j}(s)}{\prod_{i=1}^{r} (s+b_i)^{d_2(i)(\eta-1+1_{2\leq i\leq j})}},\quad s\in\Sigma, $$
    where the maximum of the degrees of the numerator and denominator is $r(\eta-1)+j-1$. By taking the derivative, we obtain
    $$\begin{aligned}
        \frac{(\LT{w_{2,j}})^{(\eta)}(s)}{\LT{w_{2,1}}(s)} = \frac{p_{\eta,j}(s)}{\prod_{i=1}^{r} (s+b_i)^{d_2(i)(\eta-1+1_{2\leq i\leq j})}},\quad s\in\Sigma,
    \end{aligned}$$
    where 
    $$p_{\eta,j}(s) =  p_{\eta-1,j}'(s) - p_{\eta-1,j}(s) \sum_{i=1}^r \frac{d_2(i) (\eta+1_{2\leq i\leq j})}{s+b_i} + p_{\eta-1,j}(s) \frac{(\LT{w_{2,1}})'(s)}{\LT{w_{2,1}}(s)},\quad s\in\Sigma.$$
    We know that 
    $$\frac{(\LT{w_{2,1}})'(s)}{\LT{w_{2,1}}(s)} = \frac{p_{1,1}(s)}{\prod_{i=1}^{r} (s+b_i)^{d_2(i)}},\quad s\in\Sigma,$$
     with $\deg p_{1,1} = r$ or all $d_2(i)=1$, so $(\LT{w_{2,j}})^{(\eta)}(s)/\LT{w_{2,1}}(s)$ is a rational function with denominator $\prod_{i=1}^{r} (s+b_i)^{d_2(i)(\eta+1_{2\leq i\leq j})} $. The degree of the numerator is at most $r\eta+j-1$. If the degree of the denominator is not $r\eta+j-1$, we must have that $d_2(i_0)=0$ for some $i_0$. In that case, $\deg p_{\eta-1,j}=r(\eta-1)+j-1$ and $\deg p_{1,1}= r$. Since the first two terms of $p_{\eta,j}(s)$ lead to a polynomial of degree at most $r\eta+j-2$ in the numerator, the degree of the numerator must be $\deg p_{\eta-1,j}+\deg p_{1,1}= r\eta+j-1$. Note that if $\vec{n}\in\mathcal{S}^r$ has $\sz{n}=n$, then $n_i = \eta+1$ for $i\in\{1,\dots,j\}$ and $n_i = \eta$ for $i\in\{j+1,\dots,r\}$, so that we can write the denominator as $\prod_{i=1}^r (s+b_i)^{d_2(i)n_i}/(s+b_1)^{d_2(1)}$. We can therefore conclude that $\mathcal{W}(n;w_{2,1},\dots,w_{2,r})$ is of additive derivative type w.r.t. a function $\om_n:\R\to\C$ with    
        $$ \LT{\om_n}(s) = \frac{(s+b_1)^{d_2(1)}\LT{w_{1,1}}(s)}{\prod_{i=1}^r (s+b_i)_{n_i}^{d_2(i)}},\quad s\in\Sigma. $$
    Together with the identity,
        $$ \LT{w_{1,1}}(s) = \exp\left( c  \int_{s_0}^{s} \prod_{i=1}^r \frac{(t+a_i)^{d_1(i)}}{(t+b_i)^{d_2(i)}} dt \right) \frac{1}{(s+b_1)^{d_2(1)}} ,\quad s\in\Sigma,$$
    this yields $\LT{\om_n}=\LT{\om_{2,n}}$ on $\Sigma$ as desired.
\end{proof}

\section*{Acknowledgements}

I would like to thank Andrei Martínez-Finkelshtein for introducing me to the notion of finite free convolutions, for explaining its connection to certain families of multiple orthogonal polynomials and for the many helpful discussions related to the open problem discussed in Section~\ref{FFP}. I'm also grateful to Mario Kieburg for providing some insight on the characterizations presented here and Arno Kuijlaars for suggesting several improvements in the presentation of the main results. Finally, I acknowledge PhD project 3E210613 funded by BOF KU Leuven and the grant KAW 2023.0216 from the Knut and Alice Wallenberg Foundation.

\end{document}